\newtheorem{theorem}{Theorem}[section]
\newtheorem{prop}[theorem]{Proposition}
\newtheorem{lemma}[theorem]{Lemma}
\newtheorem{corollary}[theorem]{Corollary}
\theoremstyle{remark}
\newtheorem{remark}[theorem]{\bf {Remark}}
\newtheorem{example}[theorem]{\bf {Example}}
\numberwithin{equation}{section}
\DeclareMathOperator{\Crit}{Crit}
\DeclareMathOperator{\Hom}{Hom}
\DeclareMathOperator{\End}{End}
\DeclareMathOperator{\Ker}{Ker}
\DeclareMathOperator{\Tc}{Tc}
\DeclareMathOperator{\TP}{\mathrm{TP}}
\DeclareMathOperator{\Tr}{Tr}
\DeclareMathOperator{\Sch}{Sch}
\DeclareMathOperator{\grad}{grad}
\DeclareMathOperator{\ch}{ch}
\DeclareMathOperator{\Tch}{Tch}
\DeclareMathOperator{\tr}{tr}
\DeclareMathOperator{\id}{id}
\DeclareMathOperator{\codim}{codim}
\DeclareMathOperator{\wstr}{w-str}
\DeclareMathOperator{\Real}{Re}
\DeclareMathOperator{\Imag}{Im}
\newcommand\bC{{\mathbb C}}
\newcommand{\bN}{{{\mathbb N}}}
\newcommand{\bP}{{{\mathbb P}}}
\newcommand\bR{{\mathbb R}}
\newcommand\bZ{{\mathbb Z}}
\newcommand{\ul}{\underline}
\newcommand{\ra}{\rightarrow}
\newcommand{\ve}{\varepsilon}
\newcommand{\vre}{\varepsilon}
\begin{document}

\title{Non-tame Morse-Smale flows and odd Chern-Weil Theory}

\author{Daniel Cibotaru}
\address{Universidade Federal do Cear\'a, Fortaleza, CE, Brazil}
\email{daniel@mat.ufc.br}
\thanks{Partially supported by the CNPq Universal Project}
\author{Wanderley Pereira}
\address{Universidade Estadual do Cear\'a, Limoeiro do Norte, CE, Brazil}
\email{wanderley.pereira@uece.br}

\subjclass[2010]{Primary 58A25,  49Q15; Secondary 53C05.}
\begin{abstract} Using a certain well-posed ODE problem introduced by Shilnikov in the sixties, G. Minervini proved  in his PhD thesis \cite{M}, among other things,  the Harvey-Lawson Diagonal Theorem but without the restrictive tameness condition for Morse flows.  Here we combine the same techniques with the ideas of Latschev in order to construct local resolutions for the flow of the graph of a section of a fiber bundle. This is endowed with a vertical vector field which is horizontally constant and Morse-Smale in every fiber. The resolution allows the removal of the tameness hypothesis from the homotopy formula in \cite{Ci2}. We give one finite and one infinite dimensional application. For that end, we introduce  closed smooth forms of odd degree associated to any triple $(E,U,\nabla)$ composed of a hermitian vector bundle, unitary endomorphism and metric compatible connection.  

\end{abstract}
\maketitle
\tableofcontents
\section{Introduction}

The well-known Morse Lemma gives the canonical form of a Morse function $f$ on a compact, Riemannian manifold $(M,g)$ around a critical point but does not provide information about the gradient flow. On the other hand, the Hartman-Grobman Theorem gives the \emph{topological} conjugacy class of the gradient flow around the critical point. However, there are important situations where both of these classical results are insufficient to answer the relevant questions. We have in mind the following context. Suppose  one is interested in taking a  smooth submanifold $S$  and "flow it through" the critical point. Let us think that $S$ lies within a regular level $c-\epsilon$ of the Morse function right "before" a critical level and we look at its "trace" at a regular level $c+\epsilon$, meaning the intersection of the (forward) flow lines determined by $S$ with level $c+\epsilon$.  Obviously, this "trace" can be empty if $S$ is contained in the stable manifold of the critical point. So a transversality condition with the stable manifold is naturally imposed. The natural question is whether  one say anything about the  structure of the \emph{closure} of the "trace" at level $c+\epsilon$? One expects to get   at least a rectifiable set because of transversality.  One might even hope to prove something stronger, namely the existence manifold with corners of the same dimension as the submanifold and a  proper "projection"  which  maps to the closure of the trace and is one-to-one almost everywhere.  It turns out that in order to make this rigorous  a tameness condition on the triple $(M,g,f)$ is  helpful. One such condition was introduced in \cite{HL1}.  A Morse function $f$ is called \emph{tame} if around each critical point one can find coordinates for which two requirements are met, the metric is flat and the Morse function has the canonical form of the Morse Lemma. An immediate consequence of tameness is that the eigenvalues of the Hessian are $\pm 1$. This gives an idea of how restrictive tameness is. On the positive side, the flow  has the simplest form possible and one can prove quite easily, by performing a blow-up of the intersection of the submanifold $S$ with the stable manifold  of the critical point that a resolution of the closure of the "trace" is available. In fact, one can prove that such a manifold with corners resolution is available also for the closure of the entire "flow-out" of the submanifold between levels $c-\epsilon$ and $c+\epsilon$.  More general situations are contemplated in \cite{Ci2,La}. 
 
 The existence of such resolutions have important consequences. The Harvey-Lawson Diagonal  Theorem says that for the  gradient flow $\varphi:\bR\times M\ra M$ induced by a tame   $f$, which additionally  satisfies Smale's transversality condition  there exists a rectifiable current $T$ on $M\times M$ such that
 \[ dT= \Delta-\sum_{p\in\Crit(f)}U_p\times S_p,
 \]
 where $\Delta$ is the diagonal and $U_p$ and $S_p$ are the unstable, resp. the stable manifold of the critical point $p$.  To be a bit more precise  the submanifold in this case is the diagonal and    the flow is on $M\times M$ via $\varphi$ in the first component of the product and keeping fixed the second component.  In his PhD thesis, J. Latschev \cite{La} also used the resolution idea and extended the Harvey-Lawson Diagonal  Theorem to Morse-Bott-Smale flows. The first author developed this point of view further  in \cite{Ci2} in order to extend the results to sections of fiber bundles, satisfying adequate transversality conditions. Even with the tameness condition in place, the rigorous details of the construction of the  resolution are quite involved. Moreover, special care needs  to be taken for those points mapped by the section to the critical points, e.g. the points $(p,p)\in \Delta$ when $p\in \Crit(f)$. 
 
  Completely new ideas are necessary in order to deal with the \emph{non-tame} case. In his PhD thesis, Minervini \cite{M} used a combination of results of 
 Shilnikov \cite{Sh} on a certain type of ODE problems together with objects he introduced, called horned stratified spaces in order to prove the Harvey-Lawson Theorem without the tameness condition. Applications to Morse-Novikov theory were given by Harvey and Minervini in \cite{HM}. In this article, we take the next natural step and remove the tameness condition from the currential homotopy formula of \cite{Ci2}. With one caveat, the (model)  flows in each fiber are assumed here Morse as opposed to Morse-Bott in \cite{Ci2}.  We plan to return to the Morse-Bott case somewhere else. 
 
We implement a combination of the two main ideas from  \cite{La} and \cite{M}  in our present approach. On one hand we  use  Shilnikov-Minervini local analysis of  the closure of the  graph of the flow which gives a local resolution (see Theorem \ref{teo.subvde}), but use induction on the critical levels ala Latschev for the proof of the next homotopy formula.  
\begin{theorem}\label{tlimxi00}
Let $\pi: P\longrightarrow B$ be a fiber bundle with compact fiber.  Let $X$ be a horizontally constant Morse-Smale	vertical vector field and denote by $\Phi:\bR\times P\ra P$ the flow induced by $X$.  Let $s:B\longrightarrow P$ be a section transverse to all the stable manifolds $\mathrm{S}(F)$ associated to the critical manifolds $F$ of $X$ and let $\xi_t(b):=(\Phi_t(s(b)),s(b))$, $b\in B$. Then $$ T=\xi([0,+\infty)\times B)$$ defines a $ (n + 1) $-dimensional  rectifiable current of locally finite mass  and if $B$ is compact then $T$ is of finite mass.
	
	Moreover, the following equality of currents holds in $P\times_BP$: 
	\begin{eqnarray}\label{bordo.T}
	\mathrm{d} T=\sum_{F}\mathrm{U}(F)\times_{F}s(s^{-1}(\mathrm{S}(F)))-(\xi_0)_*(B).
	\end{eqnarray}	
where  $U(F)$ are the  unstable manifolds of $X$.
\end{theorem}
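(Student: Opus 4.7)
My plan is to proceed by induction on the critical values of the horizontally constant Morse--Smale vertical vector field $X$, in the spirit of Latschev, using Theorem \ref{teo.subvde} as the local building block at each critical level. Order the critical values of the fiberwise Morse function as $c_1 < c_2 < \cdots < c_N$ and interleave them with regular values $a_0 < c_1 < a_1 < \cdots < c_N < a_N$. For each $k$ one assembles the portion of $T$ consisting of trajectories that have not yet crossed level $a_k$, inductively establishing rectifiability, local finiteness of mass, and a boundary identity containing only the critical manifolds $F$ with critical value $\le c_k$, plus an ``exit slice'' at level $a_k$ that will be cancelled at the next step.

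\textbf{Smooth pieces between critical levels.} Between two consecutive critical values $X$ is nowhere zero, so the flow is a smooth diffeomorphism of the corresponding slab. Together with the transversality of $s$ to every $S(F)$, and the inductive control on the subset of $B$ already absorbed by lower critical manifolds, the map $\xi$ restricted to the relevant time interval is a smooth embedding away from a negligible set, and Stokes' theorem produces a boundary that cancels the previous exit slice and creates the next entry slice. No genuinely new boundary current appears in this regime; this is the easy portion of the argument.

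\textbf{Inductive step across a critical level.} Fix a critical value $c_k$ and a critical manifold $F$ at that level. Applying Theorem \ref{teo.subvde} in a tubular neighborhood of $F$, with the local section $s$, produces an $(n+1)$-dimensional manifold with corners $\widetilde{\Sigma}$ and a proper map onto the closure of the local flow graph that is one-to-one almost everywhere. The corners of $\widetilde{\Sigma}$ split into three kinds: an \emph{incoming} face matching the exit slice of the inductive hypothesis, an \emph{outgoing} face that will serve as the entry slice above level $c_k$, and a \emph{singular} face which, by Shilnikov's normal form together with the transversality of $s$ to $S(F)$, maps precisely onto the fiber product $U(F) \times_F s(s^{-1}(S(F)))$ with degree one. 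Rectifiability and local mass bounds of the segment of $T$ crossing $c_k$ follow from $\widetilde{\Sigma}$ being a manifold with corners and from properness of the resolution map.

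\textbf{Assembly and main obstacle.} Patching all local resolutions at all critical levels, the pairs of incoming/outgoing faces at the intermediate regular levels cancel by orientation, leaving only the initial slice $-(\xi_0)_*(B)$ at $t=0$ and one contribution $U(F) \times_F s(s^{-1}(S(F)))$ per critical manifold $F$, which is precisely \eqref{bordo.T}. Local finiteness of mass in $B$ follows by restricting to relatively compact opens, and total finiteness when $B$ is compact follows from the finiteness of $\Crit(X)$ per fiber and the compactness of the fiber. The main obstacle I expect lies in the inductive step across $c_k$: one must show that the singular face of the Minervini resolution, whose points correspond to orbits asymptoting to $F$ in both directions via $S(F)$ and $U(F)$, maps onto $U(F) \times_F s(s^{-1}(S(F)))$ with the correct orientation and multiplicity one, and that this local identification glues coherently with the Latschev-style induction on critical levels — in particular that the exit slice of one inductive level and the incoming face of the next match as oriented currents.
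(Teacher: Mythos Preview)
Your overall strategy---induction on critical levels in the style of Latschev, using Theorem~\ref{teo.subvde} as the local input at each critical level, and cancelling exit/entry slices between consecutive levels---is exactly the approach the paper takes. The identification of the terminal contribution $U(F)\times_F s(s^{-1}(S(F)))$ from the ``$t=1$'' face of the resolution is also correct.

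There is, however, a genuine gap in how you invoke Theorem~\ref{teo.subvde}. That theorem does \emph{not} take the section $s$ as input and does not produce an $(n+1)$-dimensional object: it produces the closure $\overline{W}$ of the full flow graph inside $[0,1]\times\mathring{C}_\varepsilon\times\mathring{C}_\varepsilon$, a manifold with boundary of dimension $m+1$ (with $m=\dim M$). To obtain the $(n+1)$-dimensional resolution $\mathcal{A}$ one must enlarge $\overline{W}$ by the base factor $B_0$ and then intersect it \emph{transversely} with a second manifold $Z=\mathbb{R}\times V_\theta\times s(B_0)$ built from the section and from a carefully chosen flow-convex neighbourhood $V_\theta\subset V^{\varepsilon}_{\gamma}$ coming from Theorem~\ref{teo.vizinhanca}. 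This transverse intersection is the actual construction of your $\widetilde{\Sigma}$; it is where the transversality hypothesis $s\pitchfork S(F)$ is used, and it is what forces the corners structure and the three faces you describe. Without this step you do not yet have a resolution of the flow-out of $s$.

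A second point you pass over is properness of the resolution map. Theorem~\ref{teo.subvde} gives no properness statement; the paper proves properness separately (Propositions~\ref{prop.propria} and~\ref{proper2}) by exploiting the flow-convexity of $V^{\varepsilon}_{\gamma}$ and the BVP estimates. Properness is what allows you to push forward $\mathcal{A}$ and its boundary as currents and to invoke Stokes via Lemma~\ref{lema.ideia}, so it is not a detail that can be absorbed into ``manifold with corners plus proper map'' without argument. Relatedly, the injectivity needed for ``one-to-one almost everywhere'' does not propagate automatically through the induction: the paper tracks an auxiliary condition (item (d) of Proposition~\ref{model.prop}, that the first component of the slice map is determined by the remaining components on the top stratum) precisely to recover injectivity on the new distinguished face at the next level. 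Your ``main obstacle'' paragraph is close to this, but the mechanism that makes it work is this extra bookkeeping, not just an orientation check.
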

Recall that a vertical vector field $X$ on the total space of a fiber bundle $P\ra B$  is called horizontally constant if there exist local trivializations of the fiber bundle such that $X$ has a zero horizontal component in this trivialization. As a consequence, the flow induce by $X$ is, up  to diffeomorphism, the same  in every fiber. It also means that if the flow in the fiber is Morse-Bott, then the critical sets $F$ of $X$ are manifolds and so are the sets $S(F)$ and $U(F)$.

An immediate consequence (see Corollary \ref{c.principal}) of  Theorem \ref{tlimxi00} is the explicit computation of  limits in the weak sense
\[ \lim_{t\ra \infty} s_t^*\omega\]
 where $s_t:=\Phi_t\circ s$ and $\omega\in \Omega^*(P)$ while also justifying a transgression formula for closed forms $\omega$
\begin{equation}\label{0eq0}\lim_{t\ra \infty} s_t^*\omega-s^*\omega=dT(\omega).
\end{equation}

This Poincar\'e duality type of result  is a source of many applications (see \cite{Ci4}) even in the tame case. In an early pre-print of \cite{Ci2} posted on arXiv  an application  to (\ref{0eq0}) concerning certain odd degree forms on the unitary group was included. The flow used however did not satisfy the tameness hypothesis and the application was removed from the published version.   We present it here in a more general context, but not before revisiting a classical topic and introducing some new objects which seem of independent interest.
\vspace{0.2cm}

Chern-Weil theory is an important source of closed forms arising from geometric data. To any  complex vector bundle $E\ra B$ of rank $n$ endowed with a connection $\nabla$ and a $GL(n)$ invariant polynomial $P$ in the entries of an $n\times n$ matrix one has an associated closed form $P(F(\nabla))$. For homogeneous $P$ one gets that $P(F(\nabla))$ is of \emph{even} degree, more precisely twice the degree of $P$. The deRham cohomology class of $P(F(\nabla))$ does not depend on $\nabla$.  

 In order to get odd degree forms we endow $E\ra B$ with an automorphism  $A:E\ra E$. Then we associate to the quadruple $ (E,A,\nabla,P)$ a closed form $\TP(E,A,\nabla)$ which satisfies the following properties: it is natural with respect to pull-back, the cohomology class determined by $\TP(E,A,\nabla)$ does not depend on  the connection $\nabla$, the same cohomology class does not change under deformations of $A$ in the same homotopy class. We prove all these properties in Section \ref{OCW} for hermitian vector bundles but the interested reader can adapt the results without difficulty to other structure groups.
 
 Let $P=c_k$ be the invariant polynomial induced by the $k$-th elementary symmetric polynomial. The following statement, which generalizes a result of Nicolaescu (\cite{Ni}, Prop. 57) also gives a description of the Poincar\'e duals to $\Tc_k(E,g,\nabla)$.
 
 \begin{theorem}\label{Nico0} Let $E\ra B$ be a trivializable hermitian vector bundle of rank $n$ over an oriented manifold with corners $B$ endowed with a compatible connection. Let $g:E\ra E$ be a smooth gauge transform.  Suppose that  a complete flag $E=W_0\supset W_1\supset \ldots \supset W_n=\{0\}$  (equivalently a trivialization of $E$) has been fixed such that $g$ as a section of $\mathcal{U}(E)$ is completely transverse to certain (see (\ref{DefS}))  submanifolds $S(U_{I})$ determined by the flag. Then, for each $1\leq k\leq n$ there exists a  flat current $T_k$ such that the following equality of  currents of degree $2k-1$ holds:
\begin{equation}\label{TCkE0} \Tc_k(E, g,\nabla)-g^{-1}(S(U_{\{k\}}))=dT_k.
\end{equation}
where 
\begin{eqnarray*}g^{-1}(S(U_{\{k\}}))=\{b\in B~|~\dim{\Ker(1+g_b)}=\dim{\Ker{(1+g_b)\cap (W_{k-1})_b}}=1,\qquad\\
 \dim{\Ker{(1+g_b)}\cap (W_{k})_b}=0\}.\end{eqnarray*}
In particular, when $B$ is compact without boundary, then $\Tc_k(E, g,\nabla)$ and $g^{-1}(S(U_{\{k\}}))$ are Poincar\'e duals to each other and (\ref{TCkE0}) is a spark equation (\cite{HLZ4,CS}).
\end{theorem}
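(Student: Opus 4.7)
The plan is to derive (\ref{TCkE0}) as the transgression identity (\ref{0eq0}) provided by Theorem \ref{tlimxi00}, applied to the unitary gauge bundle $\pi:\mathcal{U}(E)\to B$ with $g$ as a section and with a universal odd Chern form constructed on the total space. First I would build the flow: the fixed trivialization of $E$ identifies each fiber of $\mathcal{U}(E)$ with $U(n)$, and on $U(n)$ I would use the negative gradient flow of the Morse function $U\mapsto -\Real\tr(DU)$, for a diagonal matrix $D$ with $0<d_1<d_2<\cdots<d_n$. A direct Hessian computation shows its $2^n$ critical points are the signed diagonals $A_I$ ($I\subset\{1,\dots,n\}$) with $-1$ in the positions of $I$, and that $A_{\{k\}}$ has Morse index exactly $2k-1$. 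Moreover, the stable manifolds, when phrased in terms of the $(-1)$-eigenspace of $U$ and the flag $\{W_j\}$, are precisely the strata $S(U_I)$ of (\ref{DefS}). Extending this fiberwise flow horizontally produces the horizontally constant Morse--Smale vertical vector field $X$ on $\mathcal{U}(E)$ required by Theorem \ref{tlimxi00}, and the ``completely transverse'' hypothesis on $g$ is precisely the Smale-type transversality needed there.

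On the total space, consider the universal odd Chern form $\omega_k:=\Tc_k(\pi^*E,\tau,\pi^*\nabla)\in\Omega^{2k-1}(\mathcal{U}(E))$, where $\tau$ is the tautological automorphism of $\pi^*E$ defined by $\tau_{(b,U)}=U$. By the naturality of the construction of Section~\ref{OCW}, $g^*\omega_k=\Tc_k(E,g,\nabla)$, and $\omega_k$ is closed. Corollary~\ref{c.principal} applied to $\omega_k$ then yields
\begin{equation*}
\lim_{t\to+\infty}(\Phi_t\circ g)^*\omega_k - \Tc_k(E,g,\nabla) = -\,d T_k,
\end{equation*}
where $T_k$ (up to sign) is the action of the resolution current $T$ of Theorem~\ref{tlimxi00} on $\omega_k$; flatness of $T_k$ follows from flatness of $T$. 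The theorem therefore reduces to identifying the weak limit on the left with the rectifiable current $g^{-1}(S(U_{\{k\}}))$ equipped with the appropriate orientation.

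For this identification I would use the decomposition (\ref{bordo.T}) to express the limit as $\sum_I R_I$, with $R_I$ a ``residue'' supported on $g^{-1}(S(A_I))$ obtained by integrating $\omega_k$ over the fiber of the unstable manifold $U(A_I)$ via the Shilnikov--Minervini local model and the local resolution of Theorem~\ref{teo.subvde}. On $U(A_I)$ the tautological $\tau$ takes the constant value $A_I$, so $\pi^*E|_{U(A_I)}$ splits into $\pm 1$-eigenbundles; the degree-$(2k-1)$ constraint forces $R_I=0$ whenever $\mathrm{codim}\,S(A_I)\neq 2k-1$, and a direct local computation shows that among the remaining critical points (which exist in general, since the equation $2\sum_{i\in I}i-|I|=2k-1$ admits solutions with $|I|>1$ as soon as $k\ge 5$) the residue of $c_k$ is non-zero only for $I=\{k\}$, with multiplicity $1$ and the orientation induced by the flag. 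Thus $\lim(\Phi_t\circ g)^*\omega_k=g^{-1}(S(U_{\{k\}}))$ and (\ref{TCkE0}) follows.

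The main obstacle is precisely this last computation: carrying out the residue analysis in the Shilnikov--Minervini blow-up at each critical point $A_I$ and verifying that the $k$-th elementary symmetric polynomial kills all contributions except at $A_{\{k\}}$. I expect this can be reduced, by fiber-integration over the $\pm 1$-eigenbundle directions, to an identity between the residue at a signed diagonal and a Chern number of the associated tautological bundle on a flag variety, which vanishes for $I\neq\{k\}$ by a direct Schubert-type calculation. Once the identification is established, the Poincar\'e-duality statement for compact $B$ without boundary is immediate from (\ref{TCkE0}), as is the spark-equation interpretation in the sense of \cite{HLZ4,CS}.
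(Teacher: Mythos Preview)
Your overall strategy matches the paper's exactly: apply Corollary~\ref{c.principal} to $\mathcal{U}(E)\to B$ with the fiberwise gradient flow of $\Real\Tr(AU)$ and the closed form $\omega_k=\Tc_k(\pi^*E,U^\tau,\pi^*\nabla)$, then identify the residues. The paper's proof of this theorem is indeed a two-line invocation of that corollary once the residue lemmas are in place.

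The gap is in your residue sketch. The assertion that ``on $U(A_I)$ the tautological $\tau$ takes the constant value $A_I$'' is false: $U^\tau$ is the identity section of $\mathcal{U}(\pi^*E)\to\mathcal{U}(E)$, so on the unstable manifold it ranges over all of $U(A_I)$ and equals $A_I$ only at the critical point itself. The $\pm1$-eigenbundle splitting you invoke therefore does not exist on $U(A_I)$, and your proposed reduction to a Chern number on a flag variety does not get off the ground as stated. The paper computes the residues $\int_{U(U_I)}\Tc_k(\tilde U,d)$ by a different route. For the vanishing (Lemma~\ref{UI}): when $\dim U(U_I)=2k-1$ but $I\neq\{k\}$ one has $\max I\leq k-1$, and an incidence argument shows every $U\in U(U_I)$ restricts to the identity on $W_{k-1}$; hence $U(U_I)$ lies in the image of $\iota_{k-1,n}:U(k-1)\hookrightarrow U(n)$, where $\Tc_k$ is exact (Lemma~\ref{eqikn}, because the clutching bundle over $S^1\times U(k-1)$ has $n-k+1$ trivial summands, so $c_k=0$). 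For the surviving residue at $I=\{k\}$ the paper parametrizes a dense open in $U(U_{\{k\}})$ by $\phi:S^1\times\mathbb{P}(W_k^\perp)\to U(n)$, $(\lambda,L)\mapsto\lambda\id_L\oplus\id_{L^\perp}$, pulls back $g^{-1}dg$ explicitly, and evaluates $\int\Tc_k=1$ by direct calculation. No Shilnikov--Minervini local model or Schubert calculus is used for the residues.
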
 

The condition that $E\ra B$ be trivializable is related to the non-tame flow used in the proof which requires the existence of a complete flag $E=W_0\supset\ldots \supset W_n=\{0\}$ of vector subbundles. It is an interesting question of how one can describe the Poincar\'e duals to $\Tc_k(E,g,\nabla)$ for a general $E$. 

The next application is to families of self-adjoint Fredholm operators. Fix $H$ a Hilbert space. The space of unitary operators $U\in \mathcal{U}(H)$ such that $1+U$ is Fredholm is a classifying space for odd $K$-theory. This space is a Banach manifold but is "too big" to build smooth differential forms. Restricting the attention to the Palais classifying spaces $\mathcal{U}^p$ which are unitary operators of type $1+S$ where $S$ belongs to some Schatten ideal, e.g. trace class or Hilbert-Schmidt operators then Quillen \cite{Qu} was able to construct several families of smooth forms all representing the components of the odd Chern character.  When one has  a smooth family of Dirac operators $\mathcal{D}_{b\in B}$ parametrized by a smooth and finite dimensional manifold $B$ then by taking the Cayley transforms one gets a smooth map $\varphi:B\ra \mathcal{U}^p$.  The pull-backs of the Quillen forms   compute the cohomological analytic index determined of the family.

Let us remark that in the finite dimensional case, the Quillen forms on $U(n)$ have explicit formulas  in terms of the odd Chern-Weil forms  arising from the trivial vector bundle $\bC^n$ over $U(n)$ endowed with the tautological unitary endomorphism and trivial connection, i.e. in terms of the standard deRham generators of the cohomology ring of $U(n)$.

On the other hand, in \cite{Ci1} we produced explicit representatives for the Poincar\'e duals of these classes using the infinite dimensional analogues of the stable manifolds $S(U_{\{k\}})$ which appear in Theorem \ref{Nico0}. We used sheaf theory in \cite{Ci1} in order to be able to define cohomology classes arising from certain stratified spaces, called quasi-manifolds on an infinite dimensional Banach manifold. Here we exchange the sheaf theoretical approach from \cite{Ci1}   with  the currential approach and show that under the expected transversality hypothesis one can produce a transgression formula, strengthening thus the results from \cite{Ci1}.

\begin{theorem}\label{0thm71} Let $\varphi:B\ra \mathcal{U}^p$ be a smooth map from a compact, oriented manifold $B$, possibly with corners such that $\varphi\pitchfork Z_I^p$ for every $I$. Let $\Omega_k$ be a Quillen form of degree $2k-1$ that makes sense on $\mathcal{U}^p$. Then for every such $\Omega_k$, there exists a  flat current $T_k$ such that:
  \begin{equation}\label{lasteq} \varphi^{-1}Z_{\{k\}}-(-1)^{k-1}(k-1)!\varphi^*\Omega_k=dT_k.\end{equation}
  In particular, when $B$ has no boundary, $\frac{(-1)^{k-1}}{(k-1)!}\varphi^{-1}Z_{\{k\}}^p$ represents the Poincar\'e dual of $\ch_{2k-1}([\varphi])$, where $[\varphi]\in K^{-1}(B)$ is the natural odd $K$ theory class determined by $\varphi$.
\end{theorem}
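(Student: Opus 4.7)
The strategy is to reduce the statement to the finite-dimensional Theorem~\ref{Nico0} via a spectral splitting near the eigenvalue $-1$. Write $\varphi(b)=1+S(b)$ with $S(b)$ in the relevant Schatten ideal; since $S(b)$ is compact, the spectrum of $\varphi(b)$ away from $1$ consists of isolated eigenvalues of finite multiplicity, and by the Fredholm hypothesis $-1\in\sigma(\varphi(b))$ with finite multiplicity. Using compactness of $B$ and continuity of $b\mapsto \varphi(b)$, I would choose a small closed arc of the unit circle around $-1$ whose boundary avoids $\sigma(\varphi(b))$ for every $b\in B$; the Riesz projection associated to the part of $\sigma(\varphi(b))$ inside this arc then depends smoothly on $b$ and carves out a smooth Hermitian finite-rank subbundle $V\to B$ of the trivial bundle $\ul{H}=B\times H$, preserved by $\varphi$, with $-1\notin\sigma(\varphi|_{V^\perp}(b))$ uniformly.

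Next, I would enlarge $V$ to a trivializable Hermitian bundle $\widetilde V\subset \ul{H}$ by choosing, over $B$, a unitary complement inside some $B\times \bC^N$ containing $V$, and extend $\varphi_V$ by the identity on $\widetilde V\ominus V$; a complete flag on $\widetilde V$ is induced from the standard flag of $\bC^N$. The transversality hypothesis $\varphi\pitchfork Z_I^p$, after an arbitrarily small perturbation of the auxiliary flag if needed, translates into the transversality hypothesis of Theorem~\ref{Nico0} for the gauge transform $\widetilde\varphi_V:\widetilde V\to\widetilde V$ with respect to the induced connection $\nabla$. That theorem furnishes, for each $k$, a flat current $\widetilde T_k$ with
\begin{equation*}
\Tc_k(\widetilde V,\widetilde\varphi_V,\nabla)-\widetilde\varphi_V^{-1}(S(U_{\{k\}}))=d\widetilde T_k.
\end{equation*}
By construction $\Ker(1+\varphi(b))\subset V(b)$, so the finite-dimensional stratum $\widetilde\varphi_V^{-1}(S(U_{\{k\}}))$ equals $\varphi^{-1}Z_{\{k\}}^p$ as currents on $B$.

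It then remains to match $\Tc_k(\widetilde V,\widetilde\varphi_V,\nabla)$ with $(-1)^{k-1}(k-1)!\,\varphi^*\Omega_k$ up to an exact smooth form, which can be absorbed into $\widetilde T_k$ to produce the desired $T_k$. On the $V$-part this matching is a finite-dimensional statement: on a trivial Hermitian bundle with trivial connection, Quillen's form is known to coincide, up to the stated universal constant, with the odd Chern-Simons form used in Theorem~\ref{Nico0}. On the $V^\perp$-part the eigenvalues of $\widetilde\varphi_V$ have been forced to equal $1$ while those of $\varphi|_{V^\perp}$ stay in a compact arc avoiding $-1$; one then uses that $\varphi|_{V^\perp}-1$ remains in the Schatten ideal to show that the corresponding contribution to $\varphi^*\Omega_k$ is a globally exact smooth form, whose primitive is added to $\widetilde T_k$. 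The main obstacle I foresee is precisely this last step: one must carefully verify that the Schatten-class tails of $\varphi-1$ yield only exact corrections in $\Omega_2(\mathcal{U}^p)^{\bR}$-cohomology and track the combinatorial constant $(-1)^{k-1}(k-1)!$ arising from Quillen's normalization versus the Chern-Simons convention of Section~\ref{OCW}; independence of the chosen spectral gap, of the enlargement $V\subset \widetilde V$, and of the auxiliary flag must also be checked, but each of these modifications changes $T_k$ only by an exact current. Once \eqref{lasteq} is established, the Poincar\'e duality statement follows by pairing both sides with closed forms on a compact boundaryless $B$.
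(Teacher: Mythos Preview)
Your reduction strategy is in the right spirit, but there is a genuine gap at the point where you assert that $\widetilde\varphi_V^{-1}(S(U_{\{k\}}))=\varphi^{-1}Z_{\{k\}}^p$ as currents. The Schubert cells $Z_I^p$ are cut out by incidence conditions of $\Ker(1+U)$ with the \emph{fixed ambient flag} $H\supset W_1\supset W_2\supset\cdots$, while the flag you place on $\widetilde V$ comes from some $\bC^N$ chosen only to trivialize the Riesz-projected subbundle $V$ and has no relation to the $W_m$. The containment $\Ker(1+\varphi(b))\subset V(b)$ gives you $\dim\Ker(1+\widetilde\varphi_V(b))=\dim\Ker(1+\varphi(b))$, but it says nothing about the intersection numbers $\dim[\Ker(1+\varphi(b))\cap W_m]$ versus the analogous numbers for your flag of $\widetilde V$; these are the conditions that single out $Z_{\{k\}}$ among all strata of a given codimension. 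Moreover the transversality hypothesis $\varphi\pitchfork Z_I^p$ concerns the ambient flag, and there is no mechanism in your argument translating it into transversality of $\widetilde\varphi_V$ with $S(U_I)$ for your auxiliary flag; invoking ``a small perturbation of the auxiliary flag if needed'' destroys the set-level equality you want. The Schatten-tail step, as you already suspected, is also only sketched.

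The paper sidesteps both issues by a symplectic reduction map $\mathscr R^{W}:\mathcal U^p_W\to U(W^\perp)$, $U\mapsto T-Z(1+X)^{-1}Y$, taking $W=W_N$ \emph{from the given flag} so that $\Ker(1+\varphi(b))\cap W_N=\{0\}$ for all $b$. The point is that $(\mathscr R^{W})^{-1}(S(U_I))=Z_I^p$ exactly, because the flag on $W_N^\perp$ is $W_m/W_N$ for $m\le N$; hence $\varphi^{-1}Z_{\{k\}}^p=\psi^{-1}S(U_{\{k\}})$ with $\psi:=\mathscr R^{W}\circ\varphi$, and your transversality hypotheses descend verbatim. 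Since $\mathcal U^p_W$ is a vector bundle over $U(W^\perp)$ with zero section $\iota:U\mapsto -\id_W\oplus U$, the maps $\varphi$ and $\iota\circ\psi$ are smoothly homotopic inside $\mathcal U^p$, so $\varphi^*\Omega_k-\psi^*\Omega_k^{W^\perp}$ is exact by the ordinary homotopy formula, with no Schatten-tail analysis needed. Then Theorem~\ref{Nico0} applied to $\psi:B\to U(W_N^\perp)$ finishes the proof. If you want to salvage your approach, replace the Riesz subbundle by the constant subspace $W_N^\perp$ and use $\mathscr R^{W_N}$ rather than an ad hoc extension by the identity.
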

The proof of this result reduces to Theorem \ref{Nico0} via symplectic reduction. \vspace{0.3cm}

A few more comments about the structure of the article are in order. Section \ref{s.BVP}  which revisits Shilnikov theory, also adds  some details  to Minervini's  presentation in \cite{M}. In particular, Theorem \ref{teo.vizinhanca} introduces some flow-convex neighborhoods that are  fundamental later on.      The main technical part  of the proof of the main Theorem \ref{tlimxi00} is contained in the rather long Section \ref{tec.tools}.  We felt it necessary to present many complete arguments. The proof is by induction and the amount of data one has to carry from one step to another is quite substantial. That is why we paid special care in proving properties like properness  or injectivity of the flow-resolution map. To get a feel for the level of technicality the reader can take a quick glance at Proposition \ref{model.prop} which is the key step in the induction. In essence, the main idea of the proof of the main Theorem is to follow the same steps as the induction proof presented in the Appendix of \cite{Ci2} but to substitute the oriented blow-up technique which takes care of the local picture in \cite{Ci2} with Minervini's Theorem 1.3.21 which appears here as Theorem \ref{teo.subvde}. The advantage of the presentation in \cite{Ci2} via blow-ups is that several maps are explicit and several properties come for free (e.g. a blow-down map is proper). This, of course is a consequence of tameness. On the negative side, one works hard in \cite{Ci2} to show that the relevant maps have regularity $C^1$ while here the regularity is $C^{\infty}$ and it is a consequence of the Minervini-Shilnikov theory.

 The models in the fiber are classical Morse-Smale flows associated to gradients of Morse functions. The results ought to hold  also for the Morse-Smale quasi-gradients as defined in \cite{LM}. One point that made us cautious  is contained in Remark \ref{Xtrans} and is related to the properties of the flow-convex neighborhoods of  Theorem \ref{teo.vizinhanca}. 

A proof of the main Theorem appeared in the PhD thesis \cite{Oli} of the second author. Some arguments have been simplified in this presentation.

\section{Minervini-Shilnikov theory}\label{s.BVP}
We   review some results about  certain  well-posed ODE problems studied by Shilnikov in the 60's.  We borrowed the terminology that gives the title of this section from the main reference \cite{M}. Where the complete proofs were skipped, the reader will find the details in Chapter 1 of \cite{M}.
  
 Let $(x,y)$ be coordinates in $\mathbb{R}^s\times\mathbb{R}^u$. With respect to this decomposition, let $L=\begin{bmatrix}
  L^- & 0 \\
  0 & L^+
  \end{bmatrix}$   be a constant, real coefficients matrix, in which the real parts of the eigenvalues of $L^-$ are strictly negative, say $-\lambda_s\leq\ldots\leq-\lambda_1<0$, and those of $L^+$ are strictly positive, say $0<\mu_1\leq \ldots \leq \mu_u$.  For the situation we are interested in, $L$ is symmetric. 
 
 Consider the ODE system in $\bR^s\times \bR^u$:
  \begin{equation}\label{sist.original}
  \left\{\begin{array}{lll}
  \dot{x}=L^-x+f(x,y)\\
  \dot{y}=L^+y+g(x,y)\\
  \end{array}
  \right.\end{equation}
  where 
  $F=(f,g):\mathbb{R}^s\times\mathbb{R}^u\longrightarrow\mathbb{R}^s\times\mathbb{R}^u$ is a differentiable function satisfying
  $$F(0,0)=(0,0) \; \; \mbox{and} \; \; dF(0,0)=(0,0).$$

  Given a triple $(x_0,y_1,\tau)\in \bR^s\times \bR^u \times [0,+\infty)$   a Boundary Value Problem (BVP) for the ODE \eqref{sist.original}  has the following form
  \begin{equation}\label{PVB}
\left\{\begin{array}{lll}
\dot{x}=L^-x+f(x,y)\\
\dot{y}=L^+y+g(x,y)\\
x^{*}(0)=x_0\\
 y^*(\tau)=y_1.
\end{array}
\right.
\end{equation}
  where the solution  $(x^*(t),y^*(t))$ is defined  in the interval $[0,\tau]$.  The solution at time $t$ to the BVP \eqref{PVB} with data
$(x_0,y_1,\tau)$ is denoted
$$(x^*(t,x_0,y_1,\tau),y^*(t,x_0,y_1,\tau)).$$
The "end point" $(x^*_1, y_0^*)$ for the BVP solution is
\begin{equation}\label{aplic.final}
\begin{array}{lll}
x^{*}_1(x_0,y_1,\tau)=x^*(\tau,x_0,y_1,\tau)\\
y^*_0(x_0,y_1,\tau)=y^*(0,x_0,y_1,\tau).
\end{array}
\end{equation}
  
  We compare this with the solution at time $t$ of the Initial Value Problem with data \linebreak $(x_0; y_0, t=0)$ for which the following notation is used
$$(x(t,x_0,y_0),y(t,x_0,y_0)).$$ Notice that
\begin{equation}\label{Identidades}
\begin{array}{lll}
x(t,x_0,y_0) = x^*(t,x_0,y(\tau,x_0,y_0),\tau) \\
y(t,x_0,y_0) = y^*(t,x_0,y(\tau,x_0,y_0),\tau) \\

 x^*(t,x_0,y_1,\tau) = x(t,x_0,y^*(x_0,y_1,\tau))\\
y^*(t,x_0,y_1,\tau) = y(t,x_0,y_0^*(x_0,y_1,\tau)).
\end{array}
\end{equation}
  
Let
\begin{eqnarray}
\delta_{\varepsilon}^k:=\sup_{|x,y|\leq \varepsilon}\sum_{|m|\leq k}\left|\frac{\partial^{|m|}F}{\partial (x,y)^m}\right|<+\infty,
\end{eqnarray}
where $|x,y|:=\mathrm{max}\{|x|,|y|\}$ and $| \; \cdot\; |$ denotes the euclidian norm. 

Quite similarly to the Cauchy problem for ODE and proceeding in the standard way, i.e. writing the BVP as a system of integral equations and using Banach Fixed Point Theorem,  the following general result holds:

\begin{theorem}\label{teoremasolucao}Suppose $\varepsilon>0$ is such that the the estimate $\delta^1_{2\varepsilon}<\mathrm{min}\{\lambda_1, \mu_1\}$ holds. Then the BVP for the system \eqref{PVB} is solvable for any data $(x_0,y_1,\tau)$ in the  "ball" $|x_0,y_1|<\varepsilon$.  The solution is unique, it depends smoothly on  all its arguments and satisfies:
	$$|x^*(t),y^*(t)|\leq 2|x_0,y_1|, \; \; \forall t\in [0,\tau].$$
\end{theorem}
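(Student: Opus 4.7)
The plan is to recast the BVP \eqref{PVB} as a fixed-point equation and apply the Banach Contraction Principle. By the variation-of-constants formula, $(x^*,y^*)\in C^0([0,\tau];\bR^s\times\bR^u)$ solves \eqref{PVB} if and only if
\[
x^*(t)=e^{tL^-}x_0+\int_0^t e^{(t-s)L^-}f(x^*(s),y^*(s))\,ds,
\]
\[
y^*(t)=e^{(t-\tau)L^+}y_1-\int_t^\tau e^{(t-s)L^+}g(x^*(s),y^*(s))\,ds;
\]
denote the right-hand side by $\mathcal{T}(x^*,y^*)(t)$. The endpoint conditions $x^*(0)=x_0$ and $y^*(\tau)=y_1$ are built in, and smoothness in $t$ will bootstrap from the integral representation once continuity is established.

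The heart of the argument is to exhibit a closed ball on which $\mathcal{T}$ is a contraction. Set $r:=|x_0,y_1|<\varepsilon$ and consider the closed ball $B_{2r}$ of radius $2r$ in an appropriately weighted sup-norm on $C^0([0,\tau];\bR^s\times\bR^u)$. Because $L$ is symmetric with stable spectrum in $(-\infty,-\lambda_1]$ and unstable spectrum in $[\mu_1,+\infty)$, one has $\|e^{tL^-}\|\leq e^{-\lambda_1 t}$ and $\|e^{(t-\tau)L^+}\|\leq e^{-\mu_1(\tau-t)}$ for $t\in[0,\tau]$. From $F(0,0)=0$, $dF(0,0)=0$ and the uniform bound $\delta^1_{2\varepsilon}$ on the first derivatives of $F$, the Mean Value Inequality yields both $|F(u)|\leq \delta^1_{2\varepsilon}|u|$ and $|F(u)-F(v)|\leq \delta^1_{2\varepsilon}|u-v|$ for $u,v$ in the $2\varepsilon$-ball. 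Substituting these into the two integrals and using $\int_0^t e^{-\lambda_1(t-s)}ds\leq 1/\lambda_1$, $\int_t^\tau e^{-\mu_1(s-t)}ds\leq 1/\mu_1$ shows that, after tuning the exponential weights in the norm, the gap hypothesis $\delta^1_{2\varepsilon}<\min\{\lambda_1,\mu_1\}$ forces $\mathcal{T}$ to carry $B_{2r}$ into itself and to be a strict contraction there. Banach's Theorem then produces the unique continuous fixed point, which automatically satisfies $|x^*(t),y^*(t)|\leq 2r$.

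Smooth dependence on the data $(x_0,y_1,\tau)$ follows from the parametric Banach Principle, or equivalently from the Implicit Function Theorem applied to the smooth map $(X;x_0,y_1,\tau)\mapsto X-\mathcal{T}_{(x_0,y_1,\tau)}(X)$ on the Banach space $C^0$, whose partial derivative in $X$ at the fixed point is invertible (being $I-D\mathcal{L}$ with $\|D\mathcal{L}\|<1$). To handle the varying integration interval, a time rescaling $t=\tau\sigma$ first transplants everything to the fixed interval $[0,1]$, making $\tau$ a genuine smooth parameter of the integral operator.

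The main obstacle is obtaining precisely the constant $2$ in the bound $|x^*(t),y^*(t)|\leq 2|x_0,y_1|$: naive sup-norm estimates only place the fixed point in a ball of radius $r/\bigl(1-\delta^1_{2\varepsilon}/\min\{\lambda_1,\mu_1\}\bigr)$, which blows up as $\delta^1_{2\varepsilon}\to\min\{\lambda_1,\mu_1\}$. Recovering the clean factor $2$ is what forces one into a properly weighted sup-norm that treats the stable and unstable components with different exponential weights in time, a classical technical device in Shilnikov's treatment of boundary value problems of this hyperbolic type.
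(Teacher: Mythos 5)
Your overall strategy --- rewriting \eqref{PVB} as the system of integral equations
\[
x^*(t)=e^{tL^-}x_0+\int_0^t e^{(t-s)L^-}f\,ds,\qquad
y^*(t)=e^{(t-\tau)L^+}y_1-\int_t^\tau e^{(t-s)L^+}g\,ds,
\]
and applying the Banach fixed point theorem, with smooth dependence via the implicit function theorem after the rescaling $t=\tau\sigma$ --- is exactly the route the paper indicates (it states the proof consists of ``writing the BVP as a system of integral equations and using Banach Fixed Point Theorem'' and defers details to Minervini's thesis).

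The one step you flag as ``the main obstacle'' is, however, left unresolved and is also misdiagnosed. You assert that a suitably weighted sup-norm will make $\mathcal{T}$ carry $B_{2r}$ into itself, but you never exhibit the weights or verify the self-mapping, and this verification cannot be skipped: the Lipschitz bound $\delta^1_{2\varepsilon}$ is only valid on the $2\varepsilon$-ball, so invariance of $B_{2r}$ is needed even to make sense of the contraction estimate, not merely to recover the constant $2$ a posteriori. In fact no weighted norm is needed. Keep the exponential factor on the boundary term instead of discarding it: if $\sup_s|x(s),y(s)|\le 2r$ then
\[
|x^*(t)|\ \le\ e^{-\lambda_1 t}\,r\ +\ \delta^1_{2\varepsilon}\,\frac{1-e^{-\lambda_1 t}}{\lambda_1}\,2r
\ =\ e^{-\lambda_1 t}\cdot r\ +\ \bigl(1-e^{-\lambda_1 t}\bigr)\cdot\frac{2\delta^1_{2\varepsilon}}{\lambda_1}\,r,
\]
a convex combination of $r$ and $\frac{2\delta^1_{2\varepsilon}}{\lambda_1}r$, both of which are $\le 2r$ precisely because $\delta^1_{2\varepsilon}<\lambda_1$; the symmetric estimate with $\mu_1$ handles $y^*$. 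The same integrals give the contraction constant $q=\delta^1_{2\varepsilon}/\min\{\lambda_1,\mu_1\}<1$ in the plain sup-norm, so the fixed point exists in $B_{2r}$ and the stated bound holds. Your formula $r/(1-q)$ arises from estimating the distance of the fixed point to the zeroth iterate, which is the wrong quantity to control; once you check ball-invariance directly, the factor $2$ comes out of the unweighted argument.
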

\begin{figure}[h]
	\centering
	\caption{{Boundary Value Problem}} \label{fig}
	\includegraphics[scale=0.3]{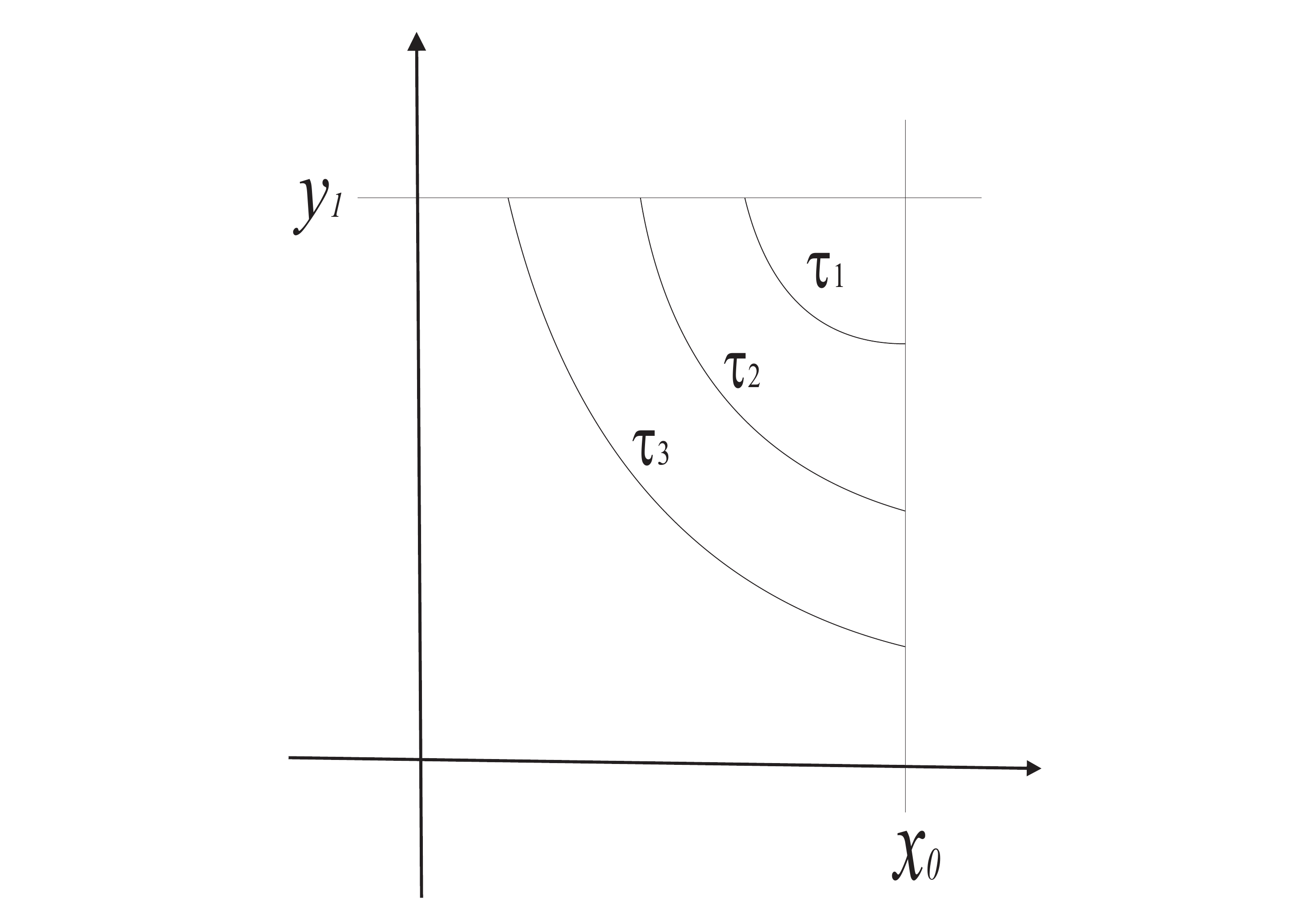}
\end{figure}

It turns out that the integral equations  equivalent to the BVP \eqref{PVB} make sense also for $\tau=\infty$ in which the only given spatial coordinate is $x^*(0):=x_0$. The correspondence $x_0\ra y^*(0)$ is smooth and its graph is an invariant manifold of the flow, tangent to $y=0$ at the origin. This is in fact the stable manifold of the origin and using the obvious change in coordinates that takes the graph diffeomorphically to the domain of definition one notices that the original vector field
\[X:=(X_1,X_2)=(L^-x+f(x,y),L^+y+g(x,y))
\] 
gets conjugated to one for which the stable and unstable manifolds coincide with the $x$ and the $y$ axes at least locally.
\begin{theorem}\label{coord.straighten}  There are smooth coordinates centered at the origin such that \eqref{sist.original} can be written as
	\begin{equation}\label{sist.dmcoord}
	\left\{\begin{array}{lll}
	\dot{x}=L^-x+\tilde{f}(x,y)x\\
	\dot{y}=L^+y+\tilde{g}(x,y)y\\
	\end{array}
	\right.\end{equation}
	where $\tilde{f}:\mathbb{R}^{s+u}\longrightarrow \End(\mathbb{R}^s)$ and $\tilde{g}:\mathbb{R}^{s+u}\longrightarrow \End(\mathbb{R}^u)$ are square matrices of functions  that vanish at the origin. Moreover, in a neighborhood of the origin in the new coordinates,  the stable and unstable manifolds are given by  $\mathrm{S}_{0}=\{y=0\}$ and  $\mathrm{U}_{0}=\{x=0\}$.
\end{theorem}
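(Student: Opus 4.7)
The plan is to use the stable and unstable manifolds guaranteed by the previous Shilnikov discussion as smooth graphs tangent to the coordinate subspaces, straighten them simultaneously with a single change of variables, and then exploit their invariance plus Hadamard's lemma to factor a coordinate out of each nonlinear term.

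First, I would record the output of the previous paragraph: the stable manifold $\mathrm{S}_0$ is the graph of a smooth function $h^s:\bR^s\to \bR^u$ with $h^s(0)=0$ and $Dh^s(0)=0$, defined in a neighborhood of the origin. Reversing time (i.e.\ applying the same result to $-X$, whose hyperbolic splitting is $(-L^+,-L^-)$) yields a smooth function $h^u:\bR^u\to \bR^s$ with $h^u(0)=0$ and $Dh^u(0)=0$ whose graph is the unstable manifold $\mathrm{U}_0$.

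Next I would define the candidate straightening map
\[
F:\bR^s\times\bR^u\longrightarrow\bR^s\times\bR^u,\qquad F(x,y):=\bigl(x+h^u(y),\ y+h^s(x)\bigr).
\]
By construction $F(x,0)=(x,h^s(x))$ parametrizes $\mathrm{S}_0$ and $F(0,y)=(h^u(y),y)$ parametrizes $\mathrm{U}_0$. Since $Dh^s(0)=0$ and $Dh^u(0)=0$, one has $DF(0,0)=\mathrm{Id}$, so by the inverse function theorem $F$ is a smooth diffeomorphism from some neighborhood of the origin onto another. Taking $\Psi:=F^{-1}$ as the new coordinate chart, the images of the new axes $\{y=0\}$ and $\{x=0\}$ under $F$ are $\mathrm{S}_0$ and $\mathrm{U}_0$, so in the $\Psi$-coordinates the stable and unstable manifolds become the coordinate subspaces, as required for the last statement of the theorem.

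Then I would verify the stated form of the ODE. Let $X=(X_1,X_2)$ denote the original vector field and let $Y:=\Psi_* X = (DF)^{-1}\cdot X\circ F$ be its expression in the new coordinates; write $Y=(Y_1,Y_2)$. Because $DF(0,0)=\mathrm{Id}$ and $F$ fixes the origin, the linearization of $Y$ at $0$ coincides with $L$, so
\[
Y_1(x,y)=L^-x+\bar f(x,y),\qquad Y_2(x,y)=L^+y+\bar g(x,y),
\]
where $\bar f,\bar g$ vanish to first order at the origin. Invariance of the new stable axis $\{y=0\}$ forces $Y_2(x,0)=0$, hence $\bar g(x,0)=0$; invariance of $\{x=0\}$ forces $Y_1(0,y)=0$, hence $\bar f(0,y)=0$. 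By Hadamard's lemma applied coordinate by coordinate, there exist smooth matrix-valued maps $\tilde f$ and $\tilde g$ with $\bar f(x,y)=\tilde f(x,y)\,x$ and $\bar g(x,y)=\tilde g(x,y)\,y$, and from $D\bar f(0)=0=D\bar g(0)$ one gets $\tilde f(0,0)=0$ and $\tilde g(0,0)=0$. This yields exactly the form \eqref{sist.dmcoord}.

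The only step with any real content is constructing a single diffeomorphism that straightens both invariant manifolds at once; the nonlinear interaction between $h^s$ and $h^u$ is what could go wrong, but it is handled cleanly by the additive ansatz $F(x,y)=(x+h^u(y),\,y+h^s(x))$, which is a diffeomorphism precisely because of the tangency $Dh^s(0)=Dh^u(0)=0$. Once that is in place, everything else — invariance of the axes, Hadamard factorization, and preservation of the linear part — is immediate.
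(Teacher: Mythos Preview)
Your proof is correct and follows essentially the same route sketched in the paper (the paragraph preceding the theorem, with details deferred to \cite{M}): obtain the stable manifold as a smooth graph over $\bR^s$ from the $\tau=\infty$ BVP, do the same for the unstable manifold, and straighten both by a change of coordinates; invariance of the axes then forces the factored form. The only minor difference is cosmetic: the paper's wording (``the obvious change in coordinates that takes the graph diffeomorphically to the domain of definition'') suggests two successive shears $(x,y)\mapsto(x,y-h^s(x))$ and then $(x,y)\mapsto(x-\tilde h^u(y),y)$, whereas you use the single additive map $F(x,y)=(x+h^u(y),\,y+h^s(x))$, which is slightly slicker since $DF(0)=\mathrm{Id}$ comes for free and you avoid having to recompute the unstable graph after the first shear.
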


 Gronwall Lemma is used to prove some useful estimates for solutions of BVP in straighten coordinates.

 \begin{theorem}
	\label{teo.estimativa} Let $\varepsilon, \alpha >0$ be such that  $\delta:=\delta^{1}_{2\varepsilon}< \alpha<\mbox{max}\{\lambda_1,\mu_1\}$. Then the solution of the  BVP defined by system \eqref{sist.dmcoord} with spatial data $|x_0,y_1|\leq \varepsilon$ satisfies for any $\tau\in[0,\infty)$ and $t\leq \tau$  the following inequality
	\begin{equation}\label{desigual.sol}
	\left\{\begin{array}{lll}
	|x^{*}(t,x_0,y_1,\tau)|\leq |x_0|\mathrm{e}^{-(\alpha-\delta)t}\\
	|y^{*}(t,x_0,y_1,\tau)|\leq|y_1|\mathrm{e}^{(\alpha-\delta)(t-\tau)}
	\end{array}
	\right.\end{equation}
	In particular,
	\begin{equation}\label{desigual.ext}
	\left\{\begin{array}{lll}
	|x^{*}_1(x_0,y_1,\tau)|\leq |x_0|\mathrm{e}^{-(\alpha-\delta)\tau}\\
	|y^{*}_0(x_0,y_1,\tau)|\leq|y_1|\mathrm{e}^{-(\alpha-\delta)\tau}.
	\end{array}
	\right.\end{equation}
	Estimates are available also for any partial derivative $\frac{\partial^kx_1^*}{\partial (x_0,y_1,\tau)}$ or $\frac{\partial^ky_0^*}{\partial (x_0,y_1,\tau)}$ of order $k$ in the form:
	\begin{equation}\label{desigual.ext1}\left|\frac{\partial^kx_1^*}{\partial (x_0,y_1,\tau)}\bigr|_{(x_0,y_1,\tau)}\right|\leq C_k e^{-(\alpha-k\delta)\tau},\qquad \left|\frac{\partial^ky_0^*}{\partial (x_0,y_1,\tau)}\bigr|_{(x_0,y_1,\tau)}\right|\leq C_k e^{-(\alpha-k\delta)\tau}
	\end{equation}
	for some constant $C_k$ which does not depend on $(x_0,y_1,\tau)$.
\end{theorem}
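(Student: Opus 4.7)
The plan is to derive everything from Gronwall's inequality applied to the integral form of the BVP in the straightened coordinates \eqref{sist.dmcoord}. First I would observe that the $x^*$-equation, being a forward problem with data at $t=0$, and the $y^*$-equation, being a terminal value problem with data at $t=\tau$, translate respectively into
\begin{equation*}
x^*(t) = e^{tL^-} x_0 + \int_0^t e^{(t-s)L^-} \tilde f(x^*(s), y^*(s))\, x^*(s)\, ds
\end{equation*}
and
\begin{equation*}
y^*(t) = e^{(t-\tau)L^+} y_1 - \int_t^\tau e^{(t-s)L^+} \tilde g(x^*(s), y^*(s))\, y^*(s)\, ds.
\end{equation*}
By Theorem \ref{teoremasolucao}, the solution satisfies $|(x^*(t), y^*(t))| \leq 2\varepsilon$ on $[0,\tau]$, so the operator norms $|\tilde f(x^*(t), y^*(t))|$ and $|\tilde g(x^*(t), y^*(t))|$ are controlled by $\delta$ along the whole trajectory (they vanish at the origin and their gradients are dominated by $\delta^1_{2\varepsilon}$).

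For the base estimates, after choosing a norm in which $|e^{tL^-}|\leq e^{-\alpha t}$ and $|e^{-sL^+}|\leq e^{-\alpha s}$ for $t,s\geq 0$ (any $\alpha$ below the relevant spectral gap works; the symmetric case is immediate via diagonalization), the $x^*$-equation yields
\begin{equation*}
e^{\alpha t}|x^*(t)| \leq |x_0| + \int_0^t \delta\, e^{\alpha s}|x^*(s)|\, ds,
\end{equation*}
and Gronwall's Lemma gives $|x^*(t)|\leq |x_0|e^{-(\alpha-\delta)t}$. The backward Gronwall argument on $[t,\tau]$, applied to $e^{-\alpha t}|y^*(t)|$ (or equivalently a time reversal), yields $|y^*(t)|\leq |y_1|e^{(\alpha-\delta)(t-\tau)}$. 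Specializing $t=\tau$ in the first estimate and $t=0$ in the second produces the endpoint bounds \eqref{desigual.ext}.

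For \eqref{desigual.ext1} I would proceed by induction on $k$, differentiating the integral equations with respect to $(x_0,y_1,\tau)$. The first variation $\partial x^*/\partial(x_0,y_1,\tau)$ satisfies a linear integral equation whose inhomogeneous term is controlled by the $k=0$ bounds and whose kernel is again bounded by a multiple of $\delta$, so a second application of Gronwall delivers the estimate with exponent $\alpha-\delta$. At step $k$ the differentiated integral equation produces, via the product and chain rules, inhomogeneous terms that are polynomials in derivatives of order strictly less than $k$ and in derivatives of $\tilde f,\tilde g$ of order at most $k$; crucially, each new differentiation of the nonlinearity $\tilde f(x^*,y^*)x^*$ either brings out a further factor of $\tilde f$ or a factor $|x^*|$, both of order $\delta$ or smaller, which explains the loss of one $\delta$ per derivative in the exponent. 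The $\tau$-derivatives are handled by differentiating the limits of integration and using the identities \eqref{Identidades} to compare with the IVP flow.

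The main technical obstacle is the combinatorial bookkeeping at step $k$: one must verify that every one of the finitely many terms produced by $k$-fold differentiation of the nonlinear integrand, after substitution of the inductive estimates of the form $e^{-(\alpha-(k-1)\delta)\tau}$ for the lower-order derivatives and of the trajectory bounds for $|x^*|,|y^*|$, assembles into a linear integral inequality to which Gronwall applies with a single exponential rate $\alpha-k\delta$ and with a constant $C_k$ depending on $k$ only (in particular, independent of $(x_0,y_1,\tau)$ in the prescribed ball). Once this bookkeeping is set up cleanly, the proof is a direct iteration of the two Gronwall arguments used in the base case.
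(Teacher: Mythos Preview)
Your proposal is correct and follows exactly the route the paper indicates: the paper does not give a self-contained proof here but simply states that ``Gronwall Lemma is used to prove some useful estimates for solutions of BVP in straighten coordinates'' and refers the reader to Chapter~1 of \cite{M} for the details. Your argument---writing the integral equations for $x^*$ and $y^*$, applying the semigroup bounds $|e^{tL^-}|\leq e^{-\alpha t}$, $|e^{-sL^+}|\leq e^{-\alpha s}$, and then invoking Gronwall (forward for $x^*$, backward for $y^*$), followed by inductive differentiation for the higher-order estimates---is precisely the Shilnikov--Minervini proof that the paper is citing.
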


 Let $\Omega$ be a neighborhood around the origin  and suppose  the vector field  $X$ has the form \eqref{sist.dmcoord}. 

We will choose  a "cube" ${C}_{\varepsilon}=\{(x,y)\in \mathbb{R}^s\times \mathbb{R}^u;\;  |x,y|\leq\varepsilon \}\subset \Omega$ of radius $\varepsilon>0$  which satisfies the  hypothesis of  Theorem \ref{teoremasolucao}.  The cube has the following boundary pieces:
	
$$	\partial^{+}{C}_{\varepsilon}=\{(x,y)\in \mathbb{R}^s\times \mathbb{R}^u;\; |x|=\varepsilon,|y|\leq \varepsilon\} $$ and $$ \partial^{-}{C}_{\varepsilon}=\{(x,y)\in \mathbb{R}^s\times \mathbb{R}^u;\; |x|\leq \varepsilon, |y|=\varepsilon\}.$$

Denote
$$V_0^{\varepsilon}=\{(x,y)\in {C}_{\varepsilon}~|~ |x|\cdot |y|=0\}=\mathrm{S}_{0}\cup\mathrm{U}_{0}.$$

In order to state the next result the partial order notation for the flow determined by $X$ is useful, i.e. 
\[ p_1\prec p_2
\]
will say that there exists a \emph{forward} flow line from $p_1$ to $p_2$.

\begin{theorem}\label{teo.vizinhanca} Suppose $L^-$ and $L^+$ are symmetric.

 Then for $\vre$ small enough $X$ is transverse to $\partial^{+}{C}_{\varepsilon}$ and to $\partial^{-}{C}_{\varepsilon}$. In addition, the following properties  hold for $C_{\varepsilon}$ with respect to $X$:
	\begin{enumerate}
		\item \textbf{Flow-convexity}:  for every pair  $q_1\prec q_2$ with $q_{1}, q_2\in {C}_{\varepsilon}$    and every   $q_1\prec p\prec q_2$ one has $p\in C_{\varepsilon}$;
		\item \textbf {Dulac map}: there exists a "first encounter" diffeomorphism $$\mu=(\mu_1, \mu_2):\partial^+{C}_{\varepsilon}\setminus \mathrm{S}_{0}\longrightarrow \partial^-{C}_{\varepsilon}\setminus \mathrm{U}_{0}$$ induced by the flow that satisfies
		$$\mu(x,y)=(x,y), \; \;\qquad \forall (x,y)\in \partial^+{C}_{\varepsilon}\cap \partial^- C_{\varepsilon}$$
		\item \textbf{Continuity of $\mu_1$ close to $S_{0}$}: for every $0<\gamma\leq \varepsilon$, there exists $0<\gamma_0\leq \varepsilon$ such that
		$$\forall (x,y)\in \partial^+ C_{\varepsilon}\;  \mbox{with}\;  |y|\leq \gamma_0\;  \mbox {one has}  \; |\mu_1(x,y)|\leq\gamma;$$
		\item \textbf{Fundamental neighborhoods}:  let $0<\gamma\leq \varepsilon$ and 
		\[ V_{\gamma}^{\varepsilon}:=\{p\in C_{\varepsilon}~|~\exists\; q=(x_1,y_1) \in \partial^-C_{\varepsilon},\; |x_1|<\gamma,\; p\prec q \}\cup
		V_0^{\varepsilon}\] 
		Then $V_{\gamma}^{\varepsilon}$ is a flow-convex neighborhood of $V_0^{\varepsilon}$ in $C_{\varepsilon}$ such that
		\[ V^{\varepsilon}_{\gamma}\ra V^{\varepsilon}_0,
		\]
		i.e. for every neighbohood $U$ of $V^{\varepsilon}_0$ there exists $\gamma_0>0$ such that $V^{\varepsilon}_{\gamma_0}\subset U$.

			\end{enumerate}
\end{theorem}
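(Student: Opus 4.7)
The plan is to base everything on two elementary Lyapunov computations in the factors, combined with the exponential estimates of Theorem \ref{teo.estimativa}. Choose $\varepsilon$ so small that $\delta:=\delta^{1}_{2\varepsilon}<\min\{\lambda_{1},\mu_{1},\alpha\}$. Since $L^{\pm}$ are symmetric, every trajectory inside $C_\varepsilon$ satisfies
\begin{align*}
\tfrac{1}{2}\tfrac{d}{dt}|x|^{2}&=\langle x,L^{-}x\rangle+\langle x,\tilde f(x,y)x\rangle\leq-(\lambda_{1}-\delta)|x|^{2},\\
\tfrac{1}{2}\tfrac{d}{dt}|y|^{2}&=\langle y,L^{+}y\rangle+\langle y,\tilde g(x,y)y\rangle\geq(\mu_{1}-\delta)|y|^{2},
\end{align*}
together with the reverse inequalities $\tfrac{d}{dt}|x|^{2}\geq-2(\lambda_{s}+\delta)|x|^{2}$ and $\tfrac{d}{dt}|y|^{2}\leq 2(\mu_{u}+\delta)|y|^{2}$. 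These inequalities deliver at once the transversality of $X$ to the smooth parts of $\partial^{\pm}C_\varepsilon$, and they show that $|x|$ strictly decreases and $|y|$ strictly increases along every orbit meeting $C_\varepsilon$, with two-sided exponential rates complementing Theorem \ref{teo.estimativa}.

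Properties (1) and (2) follow quickly. For flow-convexity, if $q_1\prec p\prec q_2$ in $C_\varepsilon$, monotonicity yields $|x(p)|\leq|x(q_1)|\leq\varepsilon$ and $|y(p)|\leq|y(q_2)|\leq\varepsilon$. For the Dulac map, send $(x_0,y_0)\in\partial^{+}C_\varepsilon\setminus\mathrm{S}_{0}$ to $(x_1,y_1):=(x(\tau),y(\tau))$, where $\tau=\tau(x_0,y_0)\geq 0$ is the (unique, by monotonicity) first time at which $|y(\tau)|=\varepsilon$; such $\tau$ exists and is finite because $|y|$ grows at least exponentially, $|x(\tau)|\leq\varepsilon$ by monotonicity, and the reverse inequality $|x(\tau)|\geq|x_0|e^{-(\lambda_{s}+\delta)\tau}>0$ places the image in $\partial^{-}C_\varepsilon\setminus\mathrm{U}_{0}$. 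Since $\tfrac{d}{dt}|y|^{2}>0$ at the exit, the implicit function theorem yields a smooth $\tau$, hence a smooth $\mu$; running the same construction backward from $\partial^{-}C_\varepsilon\setminus\mathrm{U}_{0}$ supplies a smooth inverse. Points of $\partial^{+}\cap\partial^{-}$ have $|y_0|=\varepsilon$, so $\tau=0$ and $\mu$ is the identity there.

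For (3), the upper bound $|y(\tau)|\leq|y_0|e^{(\mu_u+\delta)\tau}$ together with $|y_1|=\varepsilon$ forces $\tau\geq(\mu_u+\delta)^{-1}\log(\varepsilon/|y_0|)$; plugging this into the Shilnikov estimate $|x_1|\leq|x_0|e^{-(\alpha-\delta)\tau}$ with $|x_0|=\varepsilon$ produces
\[
|\mu_1(x_0,y_0)|\leq\varepsilon(|y_0|/\varepsilon)^{\kappa},\qquad\kappa:=(\alpha-\delta)/(\mu_u+\delta)>0,
\]
so the choice $\gamma_0:=\varepsilon(\gamma/\varepsilon)^{1/\kappa}$ works. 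Turning to (4), flow-convexity of $V_\gamma^\varepsilon$ splits into two cases. If $p_1\prec p\prec p_2$ with $p_2\notin V_0^\varepsilon$, then $p_2\prec q\in\partial^{-}C_\varepsilon$ with $|x(q)|<\gamma$, so $p\prec q$ by transitivity and $p\in V_\gamma^\varepsilon$. If $p_2\in V_0^\varepsilon$, the two-sided invariance of $\mathrm{S}_{0}$ and $\mathrm{U}_{0}$ visible in Theorem \ref{coord.straighten} forces $p\in V_0^\varepsilon$ as well. That $V_\gamma^\varepsilon$ is an (open) neighborhood of $V_0^\varepsilon$ follows from smooth dependence on initial data (Theorem \ref{teoremasolucao}): near a point of $\mathrm{U}_{0}$ exits stay close to an exit with $x_1=0$, while near a point of $\mathrm{S}_{0}$ the sojourn time diverges and the same estimate as in (3) drives $|x_1|\to 0$.

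Finally, for $V_\gamma^\varepsilon\to V_0^\varepsilon$, at any point $p$ of the trajectory from $(x_0,y_0)\in\partial^{+}$ to $(x_1,y_1)\in\partial^{-}$ of duration $\tau$, multiplying the two Shilnikov estimates of Theorem \ref{teo.estimativa} yields
\[
|x(p)|\cdot|y(p)|\leq|x_0|\cdot|y_1|\,e^{-(\alpha-\delta)\tau}\leq\varepsilon^{2}e^{-(\alpha-\delta)\tau},
\]
while the reverse bound $|x_1|\geq|x_0|e^{-(\lambda_s+\delta)\tau}$ combined with $|x_1|<\gamma$ and $|x_0|=\varepsilon$ gives $e^{-(\lambda_s+\delta)\tau}\leq\gamma/\varepsilon$ and hence $e^{-(\alpha-\delta)\tau}\leq(\gamma/\varepsilon)^{\beta}$ with $\beta:=(\alpha-\delta)/(\lambda_s+\delta)>0$. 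Thus every $p\in V_\gamma^\varepsilon\setminus V_0^\varepsilon$ lies in $\{|x|\cdot|y|\leq\varepsilon^{2-\beta}\gamma^{\beta}\}$; since $C_\varepsilon\setminus U$ is bounded away from $V_0^\varepsilon=\{|x|\cdot|y|=0\}$ for any open $U\supset V_0^\varepsilon$, shrinking $\gamma$ forces $V_\gamma^\varepsilon\subset U$. The main obstacle is precisely this last step: one must carefully couple the Shilnikov upper bounds (which govern decay at both endpoints) with a reverse Gronwall lower bound on the decay rate of $|x|$, which is what converts smallness of the exit datum $|x_1|$ into largeness of $\tau$; without it the product bound $|x|\cdot|y|\leq\varepsilon^{2}e^{-(\alpha-\delta)\tau}$ carries no information.
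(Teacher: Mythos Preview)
Your proof is correct and takes a more quantitative route than the paper. For (3) and the convergence $V_\gamma^\varepsilon\to V_0^\varepsilon$ in (4), the paper argues by contradiction---extracting subsequences via compactness of $[0,1]$ and $\partial^{\pm}C_\varepsilon$ and then invoking continuous dependence of the flow to reach a contradiction---while you instead couple the Shilnikov upper bounds of Theorem~\ref{teo.estimativa} with reverse Gronwall lower bounds on $|x|$ and $|y|$ to obtain explicit H\"older-type estimates: $|\mu_1(x_0,y_0)|\le\varepsilon(|y_0|/\varepsilon)^\kappa$ for (3), and $V_\gamma^\varepsilon\subset\{|x|\cdot|y|\le\varepsilon^{2-\beta}\gamma^\beta\}$ for (4). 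Your approach yields concrete rates (the exponents $\kappa,\beta$) and a cleaner logical structure, at the price of needing the two-sided differential inequalities for $|x|^2$ and $|y|^2$ rather than only the one-sided BVP bounds the paper uses. One cosmetic point: writing $|\langle x,\tilde f(x,y)x\rangle|\le\delta|x|^2$ with $\delta=\delta^1_{2\varepsilon}$ is a mild abuse of notation, since $\delta^1_{2\varepsilon}$ is defined via $F=(f,g)$ rather than $\tilde f,\tilde g$ directly; what you actually need is $\sup_{|x,y|\le 2\varepsilon}(|\tilde f|+|\tilde g|)$, which likewise tends to $0$ with $\varepsilon$ because $\tilde f,\tilde g$ vanish at the origin. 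The neighborhood claim in (4) is compressed but fills in correctly: any point near $\mathrm S_0$ flows backward to $\partial^+C_\varepsilon$ with small $|y_0|$ (by monotonicity of $|y|$) and then your estimate from (3) bounds $|x_1|$, while any point near $\mathrm U_0$ has an exit close to one with $x_1=0$ by continuous dependence of the exit map.
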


The following figures illustrate the properties of the Theorem \ref{teo.vizinhanca}.
\begin{figure}[h]
	\centering
	\caption{{Dulac map and the neighborhoods $V_{\gamma}^{\varepsilon}$}} \label{fig02}
	\includegraphics[scale=0.5]{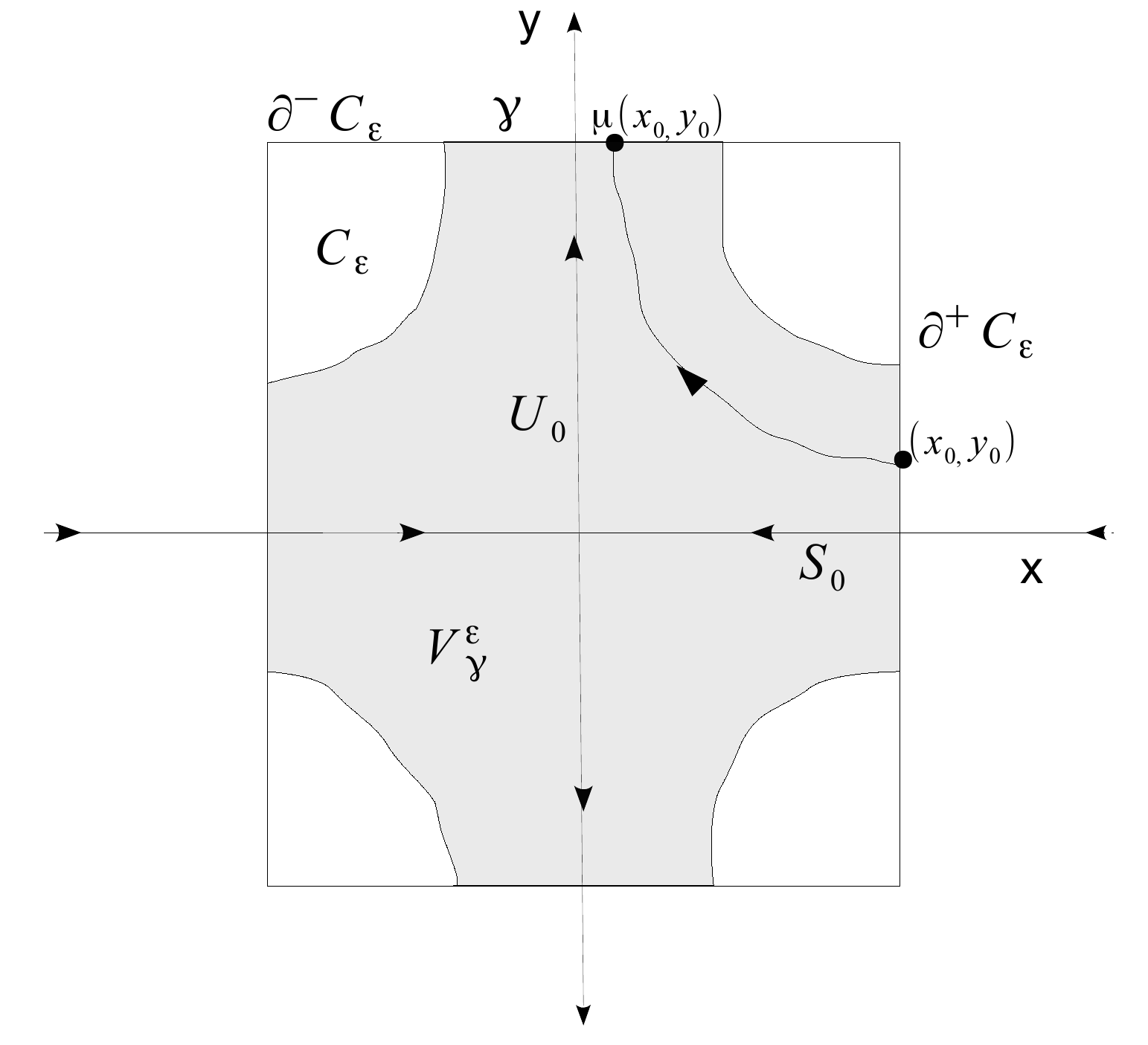}
\end{figure}

\begin{remark}\label{Xtrans} Without the symmetry property of $L=L^-\oplus L^+$ the claim about transversality  of $X$ with $\partial^+C_{\varepsilon}$, $\partial^-C_{\varepsilon}$ fails even in the linear case. This seems to have been overlooked in \cite{M}. Take $s=0$, $u=2$, $L^+:=\left(\begin{array}{cc} 1& 2\\ 0&1\end{array}\right)$, $\tilde{g}=0$. Then
\[ \langle L^+(y_1,y_2),(y_1,y_2)\rangle=(y_1+y_2)^2.
\] 
Hence $X=L^+$ on $\bR^2$ is hyperbolic but  has points of tangency along the anti-diagonal with any coordinate sphere. 
\end{remark}

\begin{proof} The tangent spaces to the cylinders $\partial^{+}C_{\vre}$ and $\partial^{-}C_{\vre}$ at  points $(x,y)$ are described via:
\[ T_{(x,y)}\partial^{+}C_{\vre}= \{(v_1,v_2)\in \bR^n~|~\;\langle v_1,x\rangle=0\}
\]
\[T_{(x,y)}\partial^{-}C_{\vre}= \{(v_1,v_2)\in \bR^n~|~\;\langle v_2,y\rangle=0\}
\]
Hence we need to look at 
\[\langle X_1(x,y),x\rangle=\langle L^-x,x\rangle+\langle \tilde{f}(x,y)x,x\rangle\] and 
\[\langle X_2(x,y),y\rangle=\langle L^+y,y\rangle+\langle \tilde{g}(x,y)y,y\rangle.\]
We have that $\langle \tilde{f}(x,y)x,x\rangle= o(|x|^2)$ uniformly in $y$ and $\langle \tilde{g}(x,y)y,y\rangle=o(|y|^2)$ uniformly in $y$ since $\tilde{f}$ and $\tilde{g}$ are continuous and vanish at the origin.

The symmetry and definiteness of $L^-$ and $L^+$  imply now that there exists $\vre_1$ and $\vre_2$ such that
\[ \langle L^-x,x\rangle+\langle \tilde{f}(x,y)x,x\rangle <0 \,\qquad \forall\;0<|x|\leq \vre_1,\forall\; |y|\leq \vre_1
\]
and
\[\langle L^+y,y\rangle+\langle \tilde{g}(x,y)y,y\rangle>0\,\qquad \forall\; 0<|y|\leq \vre_2, \forall\; |x|\leq \vre_2
\]
For $\vre\leq \min\{\vre_1,\vre_2\}$ we get the transversality of $X$ with both $\partial^{\pm}C_{\varepsilon}$.

\textbf{Proof of (1)}. Let $q_1=(x_1,y_1)\prec q_2=(x_2,y_2),$ $q_1,q_2\in {C}_{\varepsilon}$. Let $\tau>0$ be the time needed to "travel" from $q_1$ to  $q_2$, i.e. $(x_2,y_2)=(x(\tau,x_1,y_1),y(\tau,x_1,y_1))$.  Now, consider the BVP defined by $X$ with data $(x_1, y_2,\tau)$. Since $|x_1,y_2|\leq\varepsilon$, it follows from the  Theorem \ref{teo.estimativa} that for all $0\leq t\leq\tau$, the following holds:
	\begin{eqnarray*}
		|x^{*}(t,x_1,y_2,\tau)| &\leq& |x_1|\mathrm{e}^{-(\alpha-\delta)t}<|x_1|<\varepsilon\\
		|y^{*}(t,x_1,y_2,\tau)|&\leq& |y_2|\mathrm{e}^{(\alpha-\delta)(t-\tau)}<|y_2|<\varepsilon.
	\end{eqnarray*}
	Therefore, the portion of the trajectory comprised between $q_1$ and $ q_2$ is contained in ${C}_{\varepsilon}$.
	
\textbf{Proof of (2)}. Consider $(x_0,y_0)\in \partial^{+}C_{\varepsilon}$ with $|y_0|<\varepsilon$. The normal vectors $v$ at $(x_0,y_0)$ to $\partial^{+}C_{\varepsilon}$ that point to the interior of $C_{\varepsilon}$ are described by the inequality $\langle v_1,x_0\rangle<0$ and we already chose $\vre$ so that the vector field $X$ satisfies such an inequality. Hence either  the trajectory $(x(t,x_0,y_0),y(t,x_0,y_0))$ will belong to the interior of $C_{\vre}$ for small $t>0$ or $(x_0,y_0)\in \partial^+C_{\vre}\cap \partial^-C_{\vre}$ i.e. $|y_0|=\varepsilon$, which is not the case.

Suppose $t>0$. For $y_0\neq0$ we have $\displaystyle \lim_{t\rightarrow+\infty}(x(t),y(t))\neq(0,0)$ and the trajectory cuts again the boundary of $ C_{\varepsilon}$. This happens because in the cube $C_{\vre}$ the function $t\ra |x(t)|$ is decreasing while $t\ra |y(t)|$ is increasing. Indeed the derivatives of the square of these functions are equal to 
\[ \langle X_1(x,y),x\rangle \;\;\mbox{and}\;\; \langle X_2(x,y),y\rangle \;\;\mbox{respectively}
\] 
and by our choice of $\vre$ above the first one is negative while the second one is positive as long as they are not $0$ which would happen for a stable or unstable flow line.

  Let $(x_1, y_1)\in \partial^+C_{\vre}\cup\partial^-C_{\vre}$ be the first point where this trajectory hits the boundary again and $ \tau>0$ the time needed to get from $(x_0,y_0)$ to $(x_1,y_1)$.

 Since $\langle X_1(x,y),x\rangle$ is a continuous function we see that   $(x_1,y_1)\notin \partial^{+}C_{\varepsilon}\setminus\mathrm{S}_{0}$ and moreover $|x_1|<\varepsilon$, because 
\begin{equation}\label{esteq}|x_1|=|x^{*}(\tau,x_0,y_1,\tau)| \leq |x_0|\mathrm{e}^{-(\alpha-\delta)\tau}<|x_0|\leq\varepsilon.
\end{equation}

Define $\mu$, for the time being, as the map that associates to every  $(x_0,y_0)\in \partial^{+}C_{\varepsilon}\setminus(\mathrm{S}_{0}\cup \partial ^-C_{\vre})$  the point of "first encounter" $(x_1,y_1)\in \partial^{-}C_{\varepsilon}\setminus(\mathrm{U}_{0}\cup \partial^+C_{\vre})$ which lies on the same trajectory.

Suppose now that  $|x_0|=|y_0|=\varepsilon$.  If there exist $\tau_0>0$ such that \linebreak $(x(\tau_0,x_0,y_0),y(\tau_0,x_0,y_0))=(x_0',y_0')\in C_{\varepsilon}$, define the BVP with data $(x_0,y_0', \tau_0)$. Then we get a contradiction from the estimates
\begin{eqnarray*}
	\varepsilon= |y_0|=|y^*(0,x_0,y_0',\tau_0)|\leq |y_0'|\mathrm{e}^{-(\alpha-\delta)\tau_0}<|y_0'|<\varepsilon.
\end{eqnarray*}
Hence for $|x_0|=|y_0|=\varepsilon$ the trajectory determined by $(x_0,y_0)$ only intersects $C_{\vre}$ in $(x_0,y_0)$. Therefore the natural extension of $\mu$ to $\partial^+C_{\vre}\cap \partial^-C_{\vre}$ is equal to the identity on this set.

The map $\mu$ is clearly bijective.  The differentiability of $\mu$ is standard and proved along the following lines. The essential part is to prove the differentiability of the time-function that associates to each $(x_0,y_0)$ the time $t(x_0,y_0)$ it takes to get to $\mu(x_0,y_0)$. Fix one such $(x_0,y_0)$ and use the diffeomorphism of the flow that corresponds to time $t(x_0,y_0)$ to flow a small open neighborhood of $(x_0,y_0)$ inside $\partial^+C_{\varepsilon}$ which does not contain points in the stable manifold to an $n-1$ dimensional manifold $H$ which contains $\mu(x_0,y_0)$ and is still transverse to $X$. Since $\mu(x_0,y_0)$ is not a critical point put coordinates in order to turn $X$ into the generator of the first coordinate-translation by unit-time. One arrives thus at the  problem of having to prove differentiability of the time-function obtained by going from one hypersurface  to another hypersurface  through the origin and both transverse to the first coordinate. That is simply the difference in height functions where the height is the first coordinate, hence a differentiable function.

\textbf{Proof of (3)}. Suppose that there exists $0<\gamma\leq\varepsilon$ and a sequence  $(x_{0n},y_{0n})$ such that $|x_{0n}|=\varepsilon$, $|y_{0n}|\longrightarrow 0$ ($y_{0n}\neq 0$) when $n\rightarrow\infty$ and $|x_{1n}|\geq\gamma$ for all $n$ where $\mu(x_{0n},y_{0n})=(x_{1n},y_{1n})\in \partial^-{C}_{\varepsilon}$. Let $\tau_n>0$ be the sequence of moments such that
$$ (x(\tau_n,x_{0n},y_{0n}),y(\tau_n,x_{0n},y_{0n}))=(x_{1n},y_{1n}).$$

It follows from the estimates (see \eqref{esteq})

\begin{eqnarray}\label{est.x1}
\displaystyle\gamma\leq|x_{1n}|\leq |x_{0n}|\mathrm{e}^{-(\alpha-\delta)\tau_n}\leq\varepsilon\mathrm{e}^{-(\alpha-\delta)\tau_n}
\end{eqnarray}
that $\displaystyle\tau_n\leq \ln \left(\frac{\varepsilon}{\gamma}\right)\frac{1}{\alpha-\delta}$.

 We can therefore select a convergent subsequence  $(x_{0n_k}, \tau_{n_k})\longrightarrow (x_0',\tau')$. Note that $\tau'>0$, because if $\tau'=0$ then $|y_{0n_k}|\longrightarrow \varepsilon$. Indeed, the only points for which it takes $0$-time to go from $\partial^+C_{\vre}$ to $\partial^-C_{\vre}$ are the points on $\partial^+C_{\vre}\cap \partial^-C_{\vre}$ and it is not hard to see that, due to the continuity of the time-function,  the shorter the time it takes to get to $\partial^-C_{\vre}$ the closer to $\partial^+C_{\vre}\cap \partial^-C_{\varepsilon}$ the starting point has to be.

Since $|y(\tau_{n_k},x_{0n_k},y_{0n_k})|=|y_{1n_k}|=\vre$  we get that  a contradiction with
$$y(\tau_{n_k},x_{0n_k},y_{0n_k})\ra y(\tau',x',0)=0.$$
 The later holds because the trajectory determined by $(x',0)$ is stable.

\textbf{Proof of (4)}. The flow-convex property is immediate from the analogous property of $C_{\epsilon}$.

 Suppose the $V^{\epsilon}_{\gamma}$ is not a neighborhood of $V_0$.  Hence, there exists a sequence $(x_n,y_n)\in C^{\epsilon}\setminus V_{\gamma}^\varepsilon$  with $(x_n,y_n)\longrightarrow (v_1,v_2)\in V_0^{\varepsilon}$. Thus either $x_n\longrightarrow 0$ or $y_n\longrightarrow 0$. 

As none of the points $(x_n,y_n)$ is on $\mathrm{S}_0\cup \mathrm{U}_0$ there exist  points of "first encounter" $(x_{0n},y_{0n})\in\partial^+C_{\vre}$ and $(x_{1n},y_{1n})\in \partial^-C_{\vre}$ such that
\[(x_{0n},y_{0n})\prec (x_n,y_n)\prec(x_{1n},y_{1n}).
\]
Since $(x_n,y_n)\notin V^{\epsilon}_{\gamma}$ it follows that $|x_{1n}|\geq \gamma$.

The case $x_n\ra 0$ is disposed immediately by considering the BVP  $(x_n,y_{1n},\tau_n)$ where $\tau_n$ is the time it takes to go from $(x_n,y_n)$ to $(x_{1n},y_{1n})$. Then we get a contradiction in
$$\gamma\leq|x_{1n}|\leq|x_{n}|e^{-(\alpha-\delta)\tau_n}\leq |x_{n}|\ra 0.$$

For the case $y_n\ra 0$ consider the BVP with data $(x_{0n},y_{1n},\tau_n')$ where $\tau_n'$ now is the time it takes to get from $(x_{0n},y_{0n})$ to $(x_{1n},y_{1n})$. Recall that  $|x_{0n}|=|y_{1n}|=\varepsilon$. We obtain
$$\gamma\leq|x_{1n}|\leq|x_{0n}|e^{-(\alpha-\delta)\tau_n'} \; \Rightarrow \; \tau_n\leq \ln{\left(\frac{\varepsilon}{\delta}\right)}\frac{1}{\alpha-\delta}.$$

$$|y_{0n}|\leq |y_n|\mathrm{e}^{-(\alpha-\delta)s_n}\leq |y_n|,$$
where $0<s_n\leq \tau_n'$. Since $|y_n|\longrightarrow 0$, we have $y_{0n}\longrightarrow 0$.

Now, since  $|x_{0n}|=\varepsilon$ and $\tau_n'$ are bounded and $y_{0n}\ra 0$,  we are in the same scenario that led to a contradiction in the proof of item (3). 

In order to prove that $\lim_{\gamma \ra 0} V^{\vre}_{\gamma}=V_0^{\vre}$ as defined in the statement it is enough to prove that
\begin{equation} \label{limvre}\bigcap V^{\vre}_{\gamma_n}=V_0^{\vre}
\end{equation}
for any decreasing sequence $\gamma_{n}\ra 0$. Indeed if (\ref{limvre}) holds then fix one such sequence $\gamma_n$. Suppose $\exists u_n\in  V^{\vre}_{\gamma_n}\setminus U$ for all $n$. Since $\overline{V^{\vre}_{\gamma_{n+1}}}\subset V^{\vre}_{\gamma_n}$  and $\overline{V^{\vre}_{\gamma_{n+1}}}$ (the closure in $C_{\vre}$) is compact  we get that we can extract a convergent subsequence $u_{n_k}$ whose limit necessarily belongs to $\bigcap V^{\vre}_{\gamma_n}=V_0^{\vre}$. But this is a contradiction with $U$ being a neighborhood of  $V_0^{\vre}$. 

In order to prove (\ref{limvre}) take a point $a=(x,y)\in \bigcap V^{\vre}_{\gamma_n}$ such that $x\neq 0\neq y$. Since $y\neq 0$ it follows that $a\notin S_0$ and therefore $a$ lies on a trajectory which hits $\partial ^-C_{\vre}$ in a point $b=(x',y')\notin U_0$, i.e. $x'\neq 0$. But  $a\in \bigcap V^{\vre}_{\gamma_n}$ implies that $|x'|<\gamma_n$ for all $n$, hence $x'=0$. Contradiction.
\end{proof}

The properties of these neighborhoods will enable us in Section \ref{Sec3} to control the deformation of subsets by the flow of a  vector field. 

The fundamental local tool we will use in the next sections is Theorem 1.3.21 in \cite{M} which we now state.

\begin{theorem}\label{teo.subvde} Let   $C_{\varepsilon}\subset \Omega$ be a cube  as in Theorem \ref{teo.vizinhanca} and let  $\mathring{C}_{\vre}$ be its interior. If $\psi_t$ denotes the flow of the vector field $X=(L^-+f,L^++g)$, then the closure of the submanifold	
	\begin{eqnarray}\label{subvde}
\hspace{1cm}	W=\{(t,\psi_{\frac{t}{1-t}}(x,y),x,y);\; (x,y)\in \bR^{s}\times \bR^u, \; 0<t<1\}\cap(\bR\times\mathring{C}_{\vre}\times\mathring{C}_{\vre})
	\end{eqnarray}
 inside $(\mathring{C}_{\vre}\times\mathring{C}_{\vre}\times \bR)$ is a smooth submanifold with boundary
	\begin{eqnarray}\label{b.subvde}
	\partial \overline{W}= \{1\}\times (\mathrm{U}_{0}\cap \mathring{C}_{\vre})\times (\mathrm{S}_{0}\cap\mathring{C}_{\vre}) \bigcup \{0\}\times \triangle_{\mathring{C}_{\varepsilon}},
	\end{eqnarray}
	where $\triangle_{\mathring{C}_{\varepsilon}}=\{(p,p)~|~ p\in \mathring{C}_{\varepsilon}\}$ denotes the diagonal in $\mathring{C}_{\varepsilon}\times \mathring{C}_{\varepsilon}$.
\end{theorem}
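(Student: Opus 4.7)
The plan is to reparametrize $W$ using Shilnikov's boundary value problem (BVP) coordinates, which extend smoothly across $\tau = \infty$ (i.e.\ $t = 1$) thanks to the decay estimates of Theorem \ref{teo.estimativa}. Work in straightened coordinates from Theorem \ref{coord.straighten}, so that $S_0 = \{y = 0\}$ and $U_0 = \{x = 0\}$. Put $\mathring{D}_{\vre} := \{(x_0, y_1) \in \bR^s \times \bR^u : |x_0| < \vre,\; |y_1| < \vre\}$, let $\tau(t) = t/(1-t)$, and define
\[ \Phi : \mathring{D}_{\vre} \times [0, 1] \longrightarrow \bR \times \mathring{C}_{\vre} \times \mathring{C}_{\vre}, \]
\[ \Phi(x_0, y_1, t) = \bigl( t,\; x_1^*(x_0, y_1, \tau(t)),\; y_1,\; x_0,\; y_0^*(x_0, y_1, \tau(t)) \bigr), \]
with the convention $x_1^*(x_0, y_1, \infty) = y_0^*(x_0, y_1, \infty) = 0$. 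On the open stratum $0 < t < 1$, the identities (\ref{Identidades}) make $\Phi$ a smooth bijection onto $W$: given $(t, (x', y'), (x, y)) \in W$ one recovers $\tau = t/(1-t)$, $x_0 = x$, $y_1 = y'$, and BVP uniqueness forces $y = y_0^*$ and $x' = x_1^*$.

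Next I verify that $\Phi$ is $C^\infty$ up to $\{t = 0\} \cup \{t = 1\}$. At $t = 0$ the BVP is trivial ($\tau = 0$, so $x_1^* = x_0$, $y_0^* = y_1$) and $\Phi$ is manifestly smooth. At $t = 1$ the decay estimates (\ref{desigual.ext1}) provide, for each multi-index $m$, bounds $C_{|m|} e^{-(\alpha - |m|\delta)\tau}$ on the $(x_0, y_1, \tau)$-partials of $x_1^*$ and $y_0^*$ (shrinking $\vre$ if necessary so that $\delta = \delta^1_{2\vre}$ is small compared to $\alpha$ on any fixed finite range of $|m|$). Composing with $\tau(t) = t/(1-t)$ introduces factors $e^{-\alpha t/(1-t)}$ which vanish to infinite order at $t = 1$, so the iterated chain rule extends every $t$-partial of $x_1^* \circ \tau$ and $y_0^* \circ \tau$ continuously to zero at the boundary; hence $\Phi$ is $C^\infty$ on the closed cylinder.

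The third step is to show that $\Phi$ is an embedding up to the boundary and that its image is exactly $\overline{W}$. On the boundary strata $\Phi(x_0, y_1, 0) = (0, x_0, y_1, x_0, y_1)$ and $\Phi(x_0, y_1, 1) = (1, 0, y_1, x_0, 0)$ are linear embeddings onto $\{0\} \times \triangle_{\mathring{C}_{\vre}}$ and $\{1\} \times (U_0 \cap \mathring{C}_{\vre}) \times (S_0 \cap \mathring{C}_{\vre})$, respectively. Since $\partial_t \Phi$ has nonzero first coordinate ($= 1$), it is transverse to the boundary image, so $\Phi$ has full rank everywhere and is an immersion (the injectivity extends from the open stratum to the closure by continuity and the explicit form at the boundary). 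For the equality of image and closure: the inclusion $\Phi(\mathring{D}_{\vre} \times [0, 1]) \subseteq \overline{W}$ is by continuity, and conversely a convergent sequence $(t_n, \psi_{\tau_n}(p_n), p_n) \to (t_\infty, q, p)$ inside $\bR \times \mathring{C}_{\vre} \times \mathring{C}_{\vre}$ with $\tau_n = t_n/(1-t_n)$ has either $t_\infty < 1$ (bounded $\tau_n$, limit in the open stratum) or $t_\infty = 1$, and in the second case the estimates (\ref{desigual.ext}) force the $y$-component of $p_n$ and the $x$-component of $\psi_{\tau_n}(p_n)$ to vanish, producing a limit of the form $\Phi(x_0, y_1, 1)$.

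The main obstacle is the $C^\infty$-smoothness of $\Phi$ across $t = 1$: this is precisely the point where the Minervini-Shilnikov machinery is essential, because only the BVP (not the initial value problem) enjoys exponential decay of the end-point map together with all its derivatives, fast enough to beat the essential singularity introduced by the reparametrization $\tau = t/(1-t)$. Once this smoothness is secured, injectivity, the immersion property, and the description of $\partial \overline{W}$ in (\ref{b.subvde}) read off directly from the endpoint formulas for $\Phi$.
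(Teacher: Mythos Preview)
Your proof is correct and follows essentially the same approach as the paper's (sketched) proof: both rewrite $W$ in BVP coordinates via the identities \eqref{Identidades} as the graph of $(x_0,y_1,t)\mapsto (x_1^*,y_0^*)$ over $\mathring{C}_\varepsilon\times[0,1]$, and both invoke the decay estimates \eqref{desigual.ext1} to push smoothness across $t=1$. You supply more detail than the paper's sketch (the bijection onto $W$, the immersion check, and the identification of $\operatorname{Im}\Phi$ with $\overline{W}$), and your parenthetical about shrinking $\vre$ to keep $\alpha-|m|\delta>0$ for each finite order is the same caveat implicit in the paper's appeal to \cite{M}.
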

\begin{proof}(Sketch) The idea is to turn to BVP coordinates via \eqref{Identidades} in
\[ W=\left\{\left(\tau, x_1^*\left(x_0,y_1,\frac{\tau}{1-\tau}\right),y_1,x_0,y_0^*\left(x_0,y_1,\frac{\tau}{1-\tau}\right)\right)~\biggr|~|x_0,y_1|<\vre, \tau\in(0,1)\right\}.
\]
Use now \eqref{desigual.ext1} to conclude that $x_1^*$ and $y_0^*$ converge uniformly to $0$ when $\tau\ra 1$ together with their derivatives, obtaining thus that $W$ is the graph of a smooth function over $\mathring{C}_{\varepsilon}\times [0,1]$.
\end{proof}

\begin{remark}\label{rem12} It is easy to see that the projection of $\overline{W}$ onto  the $\mathring{C_{\vre}}\times \mathring{C_{\vre}}$ components coincides with the intersection $\overline{\bigcup_{t\geq 0}\tilde{\psi}_t(\Delta_{\mathring{C}_{\varepsilon}})}\cap(\mathring{C_{\vre}}\times \mathring{C_{\vre}})$ where $\tilde{\psi}$ is the flow on $\bR^{s+u}\times\bR^{s+u}$ that is equal to $\psi$ in the first component and leaves the points fixed in the second component. Take $(a,b)\in \overline{\bigcup_{t\geq 0}\tilde{\psi}_t(\Delta_{\mathring{C}_{\varepsilon}})}\cap(\mathring{C_{\vre}}\times \mathring{C_{\vre}})$. Then there exist $t_n\geq 0$ and $u_n\in \bR^{s+u}$ such that $(\psi_{t_n}(u_n),u_n)\ra (a,b)$. Since $a,b\in \mathring{C_{\vre}}$ one has  that $u_n, \psi_{t_n}(u_n)\in \mathring{C_{\vre}}$ for $n$ big enough. In fact, by passing to a subsequence either $t_n$ converges or $t_n\ra \infty$. Let $s_n\in[0,1)$ be such that $t_n=\frac{s_n}{1-s_n}$ we get that $(a,b)=\lim z_n$ where $(z_n,s_n)\in W$ converges in $[0,1]\times\mathring{C_{\vre}}\times \mathring{C_{\vre}}$. The other inclusion is also obvious.

\end{remark}

\begin{remark} \label{rem001}
We will need a slight extension of this results in the simplest situation when there exists a central manifold. If the vector field $Y$ on $\bR^s\times \bR^u\times \bR^c$ is of type $(X,0)$ with $X$ as before, not depending on $z\in \bR^c$ then rather than taking the graph of the flow $\psi^Y$ of $Y$ in $\bR^s\times \bR^u\times \bR^c\times \bR^s\times \bR^u\times \bR^c$, it makes sense to forget about one stationary variable $\bR^c$ and consider the corresponding $W^Y$ of Theorem \ref{teo.subvde} to  be
 \[W^Y:=\{(t,\psi^X_{\frac{t}{1-t}}(x,y),x,y,z)~|~t\in (0,1)\}\cap [0,1]\times\mathring{C}_{\vre}\times \mathring{C}_{\vre}\times \bR^c.\]
 The closure will be a manifold with boundary 
\[ \partial \overline{W^Y}=\{1\}\times (\mathrm{U}_{0}\cap \mathring{C}_{\vre})\times (\mathrm{S}_{0}\cap\mathring{C}_{\vre})\times \bR^c \bigcup \{0\}\times\triangle_{\mathring{C}_{\varepsilon}}\times \bR^c \]
We do this rather than considering the graph of $\psi^Y$ in the full ambient space with an eye to keep the bookkeeping simpler and avoid using fiber products later on.
 \end{remark}

We have a useful Corollary of Theorem \ref{teo.vizinhanca}.
\begin{corollary}\label{cortv} Let $(q,0), (0,r)\in C_{\ve}$ be two points in the stable, respectively the unstable manifold of the origin. Let $B^u(q,\epsilon'):=\{(q,y)~|~|y|\leq \epsilon'\}$ be a small transverse  slice  to $S_0$ that passes through $(q,0)$. Then there exists two sequences of points $(q,y_n'), (x_n,y_n)\in C_{\ve}$ such that
\begin{itemize} 
\item[(i)] $(q,y_n')\in B^u(q,\epsilon')$ and $y_n\ra 0$;
\item[(ii)] $(x_n,y_n)\ra (0,r)$;
\item[(iii)] $(q,y_n')\prec (x_n,y_n)$.
\end{itemize}
\end{corollary}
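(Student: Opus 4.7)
The natural approach is to apply the Boundary Value Problem of Theorem \ref{teoremasolucao} with data $(x_0, y_1, \tau) = (q, r, \tau_n)$ for a sequence of times $\tau_n \to +\infty$, and read the desired sequences off the endpoints of the resulting trajectories. Concretely, since $(q,0)$ and $(0,r)$ both lie in $C_{\vre}$ we have $|q|, |r| \le \vre$; provided the BVP is solvable with this data (see the technical remark below), it produces for each $n$ a trajectory $t \mapsto (x^*(t,q,r,\tau_n), y^*(t,q,r,\tau_n))$ defined on $[0,\tau_n]$ with
\[
x^*(0) = q,\qquad y^*(\tau_n) = r,
\]
and endpoints
\[
y_0^*(q,r,\tau_n) = y^*(0,q,r,\tau_n),\qquad x_1^*(q,r,\tau_n) = x^*(\tau_n,q,r,\tau_n).
\]

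\textbf{Construction of the sequences.} Set $y_n' := y_0^*(q,r,\tau_n)$ and $(x_n, y_n) := (x_1^*(q,r,\tau_n), r)$. By construction, $(q, y_n')$ and $(x_n, y_n)$ are the time-$0$ and time-$\tau_n$ points of a single forward trajectory of $X$, so $(q,y_n') \prec (x_n, y_n)$, proving (iii). For (i) and (ii), apply the exponential decay estimates \eqref{desigual.ext} from Theorem \ref{teo.estimativa}:
\[
|y_n'| \le |r|\,\mathrm{e}^{-(\alpha-\delta)\tau_n} \longrightarrow 0,\qquad |x_n| \le |q|\,\mathrm{e}^{-(\alpha-\delta)\tau_n} \longrightarrow 0.
\]
Thus $y_n' \to 0$, and since $y_n' \to 0 < \vre'$ we have $(q,y_n') \in B^u(q,\vre')$ for all $n$ large enough, giving (i); and $(x_n,y_n) = (x_n, r) \to (0, r)$, giving (ii).

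\textbf{Technical point.} The only subtlety is ensuring the BVP with data $(q,r,\tau_n)$ falls within the scope of Theorem \ref{teoremasolucao}, which requires $|q,r| < \vre$. If $|q,r| = \vre$ (i.e.\ $(q,0)$ or $(0,r)$ lies on the outer boundary cylinder of the cube), one replaces $\vre$ by a slightly larger $\tilde\vre$ still satisfying the smallness hypothesis $\delta^1_{2\tilde\vre} < \min\{\lambda_1,\mu_1\}$, so that $|q,r| < \tilde\vre$ and the BVP is solvable in the enlarged cube; the exponential estimates still yield $y_n', x_n \to 0$, and the constructed points still lie in the original cube $C_{\vre}$ for $n$ large. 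I expect no other obstacle: this is a direct application of the Shilnikov BVP machinery already developed in the section, and the main content is just recognizing that the BVP itself is precisely the tool that produces a forward orbit segment connecting a prescribed stable-direction start to a prescribed unstable-direction end.
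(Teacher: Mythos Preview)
Your proof is correct and is exactly the intended argument: the paper states Corollary \ref{cortv} without proof, presenting it as an immediate consequence of the BVP machinery (Theorems \ref{teoremasolucao} and \ref{teo.estimativa}), and your use of the BVP with data $(q,r,\tau_n)$, $\tau_n\to\infty$, together with the exponential endpoint estimates \eqref{desigual.ext}, is precisely how one reads it off. Note that item (i) in the statement contains a typo---it should read $y_n'\to 0$ rather than $y_n\to 0$, since (ii) forces $y_n\to r$; your proof establishes the corrected version.
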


Finally, we will need  the following consequence of the Flowout Theorem \cite{Le} which is proved with the same ideas as the differentiability of the Dulac map $\mu$ from Theorem \ref{teo.vizinhanca}. Let $M$ be a compact smooth Riemannian manifold and let $\psi$ denote the gradient flow of a function $f$.
\begin{prop}\label{FlB} Let $N\subset M$ be a submanifold which does not contain any critical points of $f$ and is transverse to the gradient vector field $\nabla f$. Let $f^{-1}(\theta)$ be a regular level set of $f$ such that $N\cap f^{-1}(\theta)=\emptyset$ and suppose that for every $n\in N$ there exists a time $t_n\geq 0$ such that $\psi_{t_n}(n)\in f^{-1}(\theta)$. Then the function $n\ra t_n$ is smooth and moreover the set $\{\psi_{t_n}(n)~|~n\in N\}$ is a smooth submanifold of $f^{-1}(\theta)$ diffeomorphic to $N$.
\end{prop}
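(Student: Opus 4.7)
\smallskip\noindent\textbf{Proof proposal.} The plan is to reduce everything to a single application of the Implicit Function Theorem applied to the smooth map
\[ F : N \times [0,\infty) \longrightarrow \bR, \qquad F(n,t) = f(\psi_t(n)). \]
Since $\psi$ is the gradient flow of $f$, the chain rule gives $\partial_t F(n,t) = |\nabla f(\psi_t(n))|^2$. Fix $n_0 \in N$ and set $p_0 := \psi_{t_{n_0}}(n_0)$. Because $p_0 \in f^{-1}(\theta)$ and $\theta$ is a regular value, $\nabla f(p_0) \neq 0$, so $\partial_t F(n_0,t_{n_0}) > 0$. The IFT then supplies a neighborhood $V$ of $n_0$ in $N$ and a smooth function $\tau : V \to [0,\infty)$ with $\tau(n_0) = t_{n_0}$ and $F(n,\tau(n)) = \theta$ for every $n \in V$.

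Next I must identify $\tau$ with $n \mapsto t_n$, which boils down to uniqueness of the hitting time. Since critical points are fixed by the flow and ODE solutions are unique, no orbit through a non-critical point can reach a critical point in finite forward time; hence on the entire forward orbit of any $n \in N$ one has $\partial_t F(n,t)= |\nabla f(\psi_t(n))|^2 > 0$. So $t \mapsto f(\psi_t(n))$ is strictly increasing and attains the value $\theta$ at most once, which forces $t_n = \tau(n)$ on $V$ and proves that $n \mapsto t_n$ is smooth. This is entirely analogous to the differentiability argument for the Dulac map $\mu$ in Theorem~\ref{teo.vizinhanca}, where one similarly reduces matters to the monotonicity of a ``height'' function along the transverse flow.

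For the submanifold part, define $\Phi : N \to f^{-1}(\theta)$ by $\Phi(n) = \psi_{t_n}(n)$, which is smooth by the above. Using the identity $d\psi_t(\nabla f(p)) = \nabla f(\psi_t(p))$ valid for the flow of $\nabla f$, one computes
\[ d\psi_{-t_n}\bigl(d\Phi_n(v)\bigr) \;=\; v + dt_n(v)\,\nabla f(n). \]
Vanishing forces $v$ to be a scalar multiple of $\nabla f(n) \notin T_n N$, hence $v = 0$, so $\Phi$ is an immersion. Alternatively one packages this via the Flowout Theorem: the transversality of $N$ to $\nabla f$ makes the forward flow-out of $N$ an embedded submanifold of dimension $\dim N+1$, and it meets the regular level $f^{-1}(\theta)$ transversely because $\nabla f$ is tangent to the flow-out and normal to the level. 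The intersection is therefore a smooth embedded submanifold of dimension $\dim N$, and it coincides with $\Phi(N)$. Injectivity of $\Phi$ — needed for the actual diffeomorphism with $N$ — follows again from the strict $f$-monotonicity on orbits combined with the fact that in the intended application each orbit meets $N$ at most once.

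I expect the only delicate step to be the uniqueness of the hitting time $t_n$ above; once that is established, the smoothness of $\tau$, the immersion property of $\Phi$, and the submanifold structure of $\Phi(N)$ all follow formally, as does the diffeomorphism $\Phi : N \xrightarrow{\sim} \Phi(N)$ in any situation where global injectivity is guaranteed.
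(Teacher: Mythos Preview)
Your argument is correct and matches the paper's own treatment, which gives no detailed proof but simply cites the Flowout Theorem together with the same ``height/time-function'' differentiability idea used for the Dulac map $\mu$ in Theorem~\ref{teo.vizinhanca}; your Implicit Function Theorem computation on $F(n,t)=f(\psi_t(n))$ is precisely that argument spelled out, and your immersion check (or the equivalent transverse-intersection reading via the flow-out) is the Flowout Theorem input the paper invokes.

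Your flag on injectivity is well placed: as stated, the proposition does not exclude two points of $N$ lying on the same gradient trajectory, in which case $\Phi$ would fail to be injective; the paper tacitly uses the proposition only in situations where this cannot occur, exactly as you note.
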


\section{A general homotopy formula}\label{Sec3}

Consider $\pi:P\longrightarrow B$ a locally trivial fiber bundle with  compact fiber $M$ over  a $n$-dimensional, oriented manifold $B$. Let $X:P\longrightarrow VP$ be a vertical vector field on $P$ where the vertical tangent space $VP:=\Ker d\pi$ represents the collection of all the tangent spaces to the fibers. Suppose $X$ a horizontally constant Morse-Smale vector field. Recall that this means that for every $b\in B$ there exists an open set $b\in B_0\subset B$ and  a local trivialization $\alpha:P|_{B_0}\longrightarrow M\times B_0$ such that 
\begin{eqnarray}\label{s3eq1}\alpha_*(X_p)=(\mathrm{grad}f_{\alpha_{1}(p)},0), \; \forall p\in P|_{B_0},
\end{eqnarray}
where $f:M\longrightarrow \bR$ is a Morse-Smale function for same Riemannian metric on $M$. This in particular implies that the critical set of $X$ is a fiber bundle over $B$ (with various components) and the same thing stays true about the stable and unstable manifolds.

In fact the critical manifolds of $X$ in the local trivialization $\alpha$ are $F=\{p\}\times B_0$ with $p$ satisfying $\nabla_pf=0$ and the stable and unstable manifolds are 
\[S(F)=S(p)\times B_0,\qquad U(F)=U(p)\times B_0
\] 

Let $s: B\longrightarrow P$ be a transversal section to all the stable bundles $\mathrm{S}(F)$ relative to the critical manifold $F\subset P$. 

We follow the ideas in \cite{HL1} and \cite{Ci1}. Consider the fiber bundle $P\times_{B}P\longrightarrow B$ and the vertical vector field $\tilde{X}=(X,0)$ on $P\times_{B}P$. It is not difficult to verify that the vector field $ \tilde{X} $ is horizontally constant Morse-Bott-Smale and that its flow is $\Theta_t(v_1,v_2)=(\Phi_t(v_1),v_2)$, where $\Phi_t:P\longrightarrow P$ denotes the flow of the vector field $X$. Thus, the stable and unstable manifolds  relative to a critical manifold $ \tilde{F} $ of $\tilde{X} $ are
\begin{eqnarray}
 \mathrm{S} (\tilde{F}):=\mathrm{S}(F)\times_B P\; \; \mathrm{and}\; \;  \mathrm{U} (\tilde{F}):=\mathrm{U}(F)\times_B P.
\end{eqnarray}

Now define the family of sections $\xi:[0,\infty)\times B\ra P\times_BP:$
$$\xi_t(b):= \Theta_t(s(b),s(b))=(\Phi_t(s(b)),s(b))$$
 transverse to all stable manifolds of $\tilde{X}$.
 
 \begin{remark}\label{rem123} The transversality of $s$ and $S(F)$ translates into the transversality of $\xi_0$ and $\mathrm{S} (\tilde{F})$
 \end{remark}

 The question we will be concerned with in this section is whether the following family of currents in $P\times_BP$ has a limit:

$$\displaystyle \lim_{t \rightarrow + \infty} (\xi_t)_*(B)? $$

For each $t>0$,  Stokes Theorem and the commutativity of $d$ with push-forward implies:
\begin{eqnarray}\label{eq.homo.xi}
(\xi_t)_*(B)-(\xi_0)_*(B)=d[\xi_*([0,t]\times B)].
\end{eqnarray}

From \eqref{eq.homo.xi} and of the continuity of the (exterior) differential  operator of currents, we have reduced our analysis to the existence of the limit $\displaystyle \lim_{t\rightarrow+\infty} \xi_*([0, t] \times B)$. The following result presents a positive response to the existence of this limit without the tameness condition of \cite{Ci2}.

\begin{theorem}\label{tlimxi}
Let $\pi: P\longrightarrow B$ be a fiber bundle with compact fiber and let $X$ be a horizontally constant Morse-Smale	vertical vector field. If $s:B\longrightarrow P$ is a section transversal to the stable manifolds $\mathrm{S}(F)$, then  $$ T=\xi([0,+\infty)\times B)$$ defines a $ (n + 1) $-dimensional current of locally finite mass and if $B$ is compact then $T$ is of finite mass.
Moreover, the following equality of kernels holds: 
	\begin{eqnarray}\label{bordo.T}
	\mathrm{d} T=\sum_{F}\mathrm{U}(F)\times_{F}s(s^{-1}(\mathrm{S}(F)))-\xi_*(B).
	\end{eqnarray}	
\end{theorem}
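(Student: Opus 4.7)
The plan is to mimic the induction-on-critical-levels strategy of Latschev but to replace his oriented blow-ups by the Minervini–Shilnikov local resolution of Theorem \ref{teo.subvde} (in its extended form of Remark \ref{rem001}). Since the fiber $M$ is compact, the fiberwise Morse function has finitely many critical values $c_1<c_2<\cdots<c_N$. I will construct inductively on $k$ a manifold with corners $\Sigma_k$ of dimension $n+1$ together with a proper smooth map $\sigma_k:\Sigma_k\to P\times_BP$ whose image is $\overline{\xi([0,t_k]\times B)}$, where $t_k$ is slightly past level $c_k$. Pushing forward the orientation of $\Sigma_k$ yields a rectifiable current, and passing to the limit as $k=N$ provides $T$ of locally finite mass (and finite mass when $B$ is compact, by compactness of $\Sigma_N$). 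The base case, for $t_0$ below $c_1$, is trivial since $\xi$ is then a smooth embedding off the critical locus.

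For the inductive step, fix $k$ and for each critical manifold $F$ sitting over level $c_{k+1}$ pick, using horizontal constancy, a local trivialization $\alpha:P|_{B_0}\to M\times B_0$ in which $X$ becomes $(\grad f,0)$. Around the critical point $p\in M$ apply Theorem \ref{teo.vizinhanca} to obtain a flow-convex cube $C_\ve$, its fundamental neighborhoods $V^\ve_\gamma$, and the Dulac diffeomorphism $\mu$. By Remark \ref{rem001} the closure of $W$ inside $[0,1]\times\mathring{C}_\ve\times\mathring{C}_\ve\times B_0$ is a smooth manifold with boundary whose two boundary faces are $\{0\}\times\Delta_{\mathring{C}_\ve}\times B_0$ and $\{1\}\times(U_0\cap\mathring{C}_\ve)\times(S_0\cap\mathring{C}_\ve)\times B_0$. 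Projecting to $\mathring{C}_\ve\times\mathring{C}_\ve\times B_0$ (see Remark \ref{rem12}) this gives the local resolution of the graph of the flow inside the cube.

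To splice this into $\Sigma_{k-1}$, use the transversality of $s$ with $S(F)$, which by Remark \ref{rem123} amounts to $\xi_{t_{k-1}}$ being transverse to $\partial^+C_\ve\times_B\{s(B)\}$ in $P\times_BP$. Proposition \ref{FlB} and the Dulac map $\mu$ from Theorem \ref{teo.vizinhanca}(2) parametrize the portion of the flow-out between $t_{k-1}$ and the time of first encounter with $\partial^+C_\ve$ as a smooth map out of a manifold with corners; the end-point map $(x_1^*,y_0^*)$ of the BVP (\ref{aplic.final}) does the same for the portion spent inside $C_\ve$, and the estimates (\ref{desigual.ext})–(\ref{desigual.ext1}) control all derivatives uniformly in $\tau\in[0,+\infty]$. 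These pieces glue along common boundary faces by smoothness of $\mu$ and by construction of $\overline{W}$, producing $\Sigma_k$. Properness follows from flow-convexity of $C_\ve$ and the shrinking property $V^\ve_\gamma\to V^\ve_0$ of Theorem \ref{teo.vizinhanca}(4): a sequence escaping to infinity in parameter time can only accumulate inside $C_\ve$ on $S_0\cup U_0$, and the boundary face $\{1\}\times U_0\times S_0$ of $\overline{W}$ catches exactly those accumulation points.

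Finally, the homotopy formula (\ref{bordo.T}) comes from applying Stokes on $\Sigma_N$. Internal gluing boundaries cancel in pairs. The face at parameter value $t=0$ contributes $-(\xi_0)_*(B)$. For each critical manifold $F$, the face $\{1\}\times U_0\times S_0\times B_0$ of the local resolution, once saturated along horizontal directions and composed with the section $s$ on the second factor, delivers $U(F)\times_F s(s^{-1}(S(F)))$, the second factor being constrained by $\xi_0$ to land on $s(s^{-1}(S(F)))$. The main technical obstacle is bookkeeping injectivity and properness through the induction when a point $b\in B$ satisfies $s(b)\in S(F')\cap U(F)$ for distinct $F,F'$; at such points different local resolutions meet at corners of $\Sigma_N$ and one must verify that the corner structure is smooth and that the map $\sigma_N$ is one-to-one almost everywhere. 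The flow-convexity of the cubes $C_\ve$ ensures each trajectory enters and leaves each cube at most once, reducing the corner combinatorics to that of the partially ordered set of critical manifolds linked by broken trajectories, exactly as in the tame case of \cite{Ci2}.
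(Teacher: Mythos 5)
Your overall strategy is exactly the paper's: induction on critical levels \`a la Latschev, with the oriented blow-ups of the tame case replaced by the Minervini--Shilnikov closure $\overline{W}$ of Theorem \ref{teo.subvde} and Remark \ref{rem001}, properness from flow-convexity, and Stokes on the resulting manifold with corners. (One cosmetic difference: the paper never assembles a single global $\Sigma_N$; it proves the identity on an open cover of $P\times_BP$ and patches the \emph{currents} by the sheaf property, which spares it the global corner combinatorics you allude to at the end.)

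There is, however, a genuine gap precisely at the point you flag as ``the main technical obstacle'': you never explain how the output of one inductive step becomes admissible input for the next. After the first critical level is crossed, the set to be flowed onward is no longer an embedded submanifold of a regular level set --- it is the \emph{image} of a map $\sigma$ from a manifold with corners $N$, and this image contains the broken pieces $(U_{{\bf p}_1}\cap f^{-1}(\theta))\times s(s^{-1}(S_{{\bf p}_1}\times B_0))$ glued non-injectively to the honest flow-out. So your plan to ``glue along common boundary faces'' and then intersect with the next local model $\overline{W}$ cannot be carried out as stated: one cannot take a transverse intersection of $\overline{W_2}$ with $\bR\times V_{\theta_1}\times\sigma(N)$ when $\sigma(N)$ is not a submanifold. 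The paper's resolution of this (Proposition \ref{model.prop}) is to properly embed $N\hookrightarrow\bR^j$ and intersect $\overline{W_2}$ with $\bR\times V_{\theta_1}\times\Gamma_\sigma$, i.e.\ to resolve the flow-out of the \emph{graph} of $\sigma$ in the augmented ambient space $M\times M\times B_0\times\bR^j$, projecting away the $\bR^j$ factor only at the end. Likewise, your claim that $\sigma_N$ is ``one-to-one almost everywhere'' needs a mechanism to propagate through the induction: on the new boundary face coming from $\{t=1\}\times U_{\bf p}\times(\cdots)$ the first spatial component lies in $U_{\bf p}$ and no longer determines the trajectory, and injectivity there is recovered only via the non-obvious hypothesis (d) of Proposition \ref{model.prop} (that on the top stratum the first component of $\sigma$ is a function of the other two), which must be verified for the initial $\sigma=\mathcal{R}|_{\partial^1_2\mathcal{A}_1}$ and shown to replicate at each step. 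A further point your sketch skips: between consecutive critical levels nothing guarantees the flow-out lands inside the next fundamental neighborhood $\tilde V_{\theta_1}$, so the resolution must be split into a part near the stable manifolds (where Proposition \ref{model.prop} applies) and a complementary part carried by the flow diffeomorphism, glued along an overlap --- this is handled in the remarks following Corollary \ref{Impcor}.
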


We deduce from  Theorem \ref{tlimxi} that 
\begin{eqnarray}
\displaystyle \xi_{\infty}(B):=\lim_{t\rightarrow+\infty}(\xi_t)_*(B)= \sum_{F}\mathrm{U}(F)\times_{F}s(s^{-1}(\mathrm{S}(F))).
\end{eqnarray}

Another consequence of the Theorem \ref{tlimxi} is expressed in terms of operators:

\begin{corollary}\label{c.principal} Let $\pi:P\longrightarrow B$ be an fiber bundle with compact oriented  fiber  on a $n$-dimensional smooth,  oriented $B$. Let $X: P\longrightarrow VP$ be a horizontally constant Morse-Smale vertical vector field and let $s: B\longrightarrow P$ be a section  transversal to all stable manifolds $\mathrm{S}(F)$ of  $ X $. Assume that for each critical manifold $F\subset P$ the bundle $\mathrm{U}(F)\longrightarrow F$ is oriented. Let $s_t=\Phi_t\circ s:B\longrightarrow P$ be the induced family of sections. Then for each closed form $\omega$ on $P$ of degree $k\leq n$, the following identity of flat currents in $ B $ is true:
	\begin{eqnarray}
	\displaystyle  \lim_{t\rightarrow+\infty}s_t^{*}\omega=\sum_{\mathrm{codim}\mathrm{S}(F)\leq k}\mathrm{Res}^{u}_F(\omega)[s^{-1}(\mathrm{S}(F))],
	\end{eqnarray}
	where $\displaystyle \mathrm{Res}^{u}_F(\omega)=\tau^{*}_F\left(\int_{\mathrm{U}(F)/F}\omega\right)$ and $\tau_F: s^{-1}(\mathrm{S}(F))\rightarrow F$ is the composition of $\pi^{s}_F:\mathrm{S}(F)\rightarrow F$ with  $s:s^{-1}(\mathrm{S}(F))\longrightarrow S(F)$ . Moreover, there is a flat current $\mathcal{T}_{\infty}(\omega)$ such that
	\begin{eqnarray}\label{for.transgressao}
	\sum_{\mathrm{codim} \mathrm{S}(F)\leq k}\mathrm{Res}^{u}_F(\omega)[s^{-1}(\mathrm{S}(F))]-s^{*}\omega= d[\mathcal{T}_{\infty}(\omega)].
	\end{eqnarray}
\end{corollary}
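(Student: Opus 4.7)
The plan is to derive the corollary from Theorem \ref{tlimxi} by converting the rectifiable current $T=\xi_*([0,\infty)\times B)$ on $P\times_B P$ into a flat current on $B$ via a slant-product construction. Let $\pi_i\colon P\times_B P\to P$ ($i=1,2$) denote the two projections and $\pi_B\colon P\times_B P\to B$ the base projection. Given a closed $\omega\in\Omega^k(P)$, I would set
\[
\mathcal{T}_\infty(\omega):=(\pi_B)_*\bigl(T\ctr \pi_1^*\omega\bigr).
\]
Because $T$ has locally finite mass by Theorem \ref{tlimxi} and $\pi_B$ is proper on $\supp T$ (the fibers of $P\to B$ are compact), this is a well-defined flat current of degree $k-1$ on $B$. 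Pairing with a test form $\zeta\in\Omega^{n-k+1}(B)$ and using the decomposition $\psi^*\omega=s_t^*\omega+dt\wedge s_t^*(i_X\omega)$, where $\psi=\pi_1\circ\xi$, one checks that $\mathcal{T}_\infty(\omega)$ is nothing other than the flat-current limit of the smooth finite-time transgressions $\mathcal{T}_t(\omega):=\int_0^t s_r^*(i_X\omega)\,dr$.

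Next I would differentiate. Since $\omega$ is closed, $d(\pi_1^*\omega\wedge \pi_B^*\zeta)=(-1)^k\pi_1^*\omega\wedge \pi_B^*d\zeta$, hence
\[
d\mathcal{T}_\infty(\omega)=(\pi_B)_*\bigl((dT)\ctr \pi_1^*\omega\bigr),
\]
and the boundary formula \eqref{bordo.T} supplies all the needed data. The diagonal piece is immediate: $\pi_B\circ\xi_0=\mathrm{id}_B$ and $\xi_0^*\pi_1^*\omega=s^*\omega$, so $(\pi_B)_*\bigl((\xi_0)_*(B)\ctr \pi_1^*\omega\bigr)=s^*\omega$ as a current on $B$. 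For each critical manifold $F$, I would parameterize $\mathrm{U}(F)\times_F s(s^{-1}(\mathrm{S}(F)))$ by pairs $(u,b)$ with $b\in s^{-1}(\mathrm{S}(F))$ and $u$ in the fiber of $\mathrm{U}(F)\to F$ over $\tau_F(b)$; since $\pi_B$ collapses $(u,s(b))$ to $b$, the push-forward of the contraction amounts to fiber-integration of $\omega$ along $\mathrm{U}(F)/F$ followed by multiplication by $[s^{-1}(\mathrm{S}(F))]$, producing precisely $\mathrm{Res}^u_F(\omega)\,[s^{-1}(\mathrm{S}(F))]$. The fiber integral automatically vanishes when $k<\dim(\mathrm{U}(F)/F)=\mathrm{codim}\,\mathrm{S}(F)$, explaining the range of summation. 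Assembled,
\[
d\mathcal{T}_\infty(\omega)=\sum_{\mathrm{codim}\,\mathrm{S}(F)\le k}\mathrm{Res}^u_F(\omega)\,[s^{-1}(\mathrm{S}(F))]-s^*\omega.
\]

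To conclude, I would combine the finite-time transgression identity $s_t^*\omega-s^*\omega=d\mathcal{T}_t(\omega)$ for closed $\omega$, which follows at once from $\tfrac{d}{dt}s_t^*\omega=d\,s_t^*(i_X\omega)$, with the weak convergence $\xi_*([0,t]\times B)\to T$ that is implicit in Theorem \ref{tlimxi}. The latter passes through the slant product to give $\mathcal{T}_t(\omega)\to \mathcal{T}_\infty(\omega)$ weakly on $B$, whence
\[
\lim_{t\to\infty}s_t^*\omega=s^*\omega+d\mathcal{T}_\infty(\omega)=\sum_{\mathrm{codim}\,\mathrm{S}(F)\le k}\mathrm{Res}^u_F(\omega)\,[s^{-1}(\mathrm{S}(F))],
\]
which is exactly the limit formula together with the transgression formula \eqref{for.transgressao}. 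The main technical obstacle I anticipate is verifying cleanly that $(\pi_B)_*(T\ctr\pi_1^*\omega)$ really is a flat current on $B$ and carrying out the fiber integration against the unstable-bundle structure $\mathrm{U}(F)\to F$ inside a fiber-product current; both reductions rest entirely on the rectifiability, locally finite mass, and boundary formula for $T$ established in Theorem \ref{tlimxi}.
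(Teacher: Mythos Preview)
Your proposal is correct and follows precisely the approach the paper leaves implicit: the paper states Corollary \ref{c.principal} as a direct consequence of Theorem \ref{tlimxi} without spelling out a proof, and the slant-product/push-forward mechanism you describe---pairing $T$ and $dT$ against $\pi_1^*\omega\wedge\pi_B^*\zeta$ and reading off the contributions of $(\xi_0)_*(B)$ and of each $\mathrm{U}(F)\times_F s(s^{-1}(\mathrm{S}(F)))$ via fiber integration along $\mathrm{U}(F)\to F$---is exactly the intended derivation. Your identification of the residue term and the automatic vanishing when $k<\codim\mathrm{S}(F)$ are also the right observations.
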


\begin{remark}  One can extend Theorem \ref{tlimxi} and Corollary \ref{c.principal} without extra effort to the case when $B$ is a manifold with corners and $s:B\ra P$ is a smooth section completely transverse to all $S(F)$. This happens because any smooth map on a manifold with corners can, by definition, be extended locally in a neighborhood of the corner a smooth map defined on an open set in $\bR^n$. Since the same applies to the trivialization maps, one  can extend smoothly the whole fiber bundle structure and the section to be defined on an open set in $\bR^n$. The transversality condition being open, it will hold in  an open subset. Then one uses the corresponding results of Theorem \ref{tlimxi} and Corollary \ref{c.principal} in this open set, only to restrict it afterwards.
\end{remark}

In order to prove Theorem \ref{tlimxi} we notice that due to the sheaf property of currents it is enough to prove it for a convenient open covering of $B$. Namely if there exists an open covering $B=\cup B_i$ such that $T_i:=T\bigr|_{P_i}$ exists, where $P_i:=P\bigr|_{B_i}\times_{B_i}P\bigr|_{B_i}$, it is of locally finite mass and \eqref{bordo.T} holds then the same thing is true over $B$. This happens first because on the overlap $P_i\cap P_j$ the restriction of the  limits $\xi_*([0,t]\times B_{i/j})$ are the same. This allows the patching of $(T_i)_{i\in I}$ to a single current. A similar argument works for the right hand side of \eqref{bordo.T}.
\begin{remark} One has to worry always if an embedded oriented submanifold really determines a current especially when it is not \emph{properly} embedded, i.e. the inclusion is not a proper map. This is the case of each $\mathrm{U}(F)\times_{F}s(s^{-1}(\mathrm{S}(F)))$ and is not clear apriori that they exist globally. But if \eqref{bordo.T} holds  on an open covering $P_i$  then the patching of $T_i$ to a single current implies the patching of $\mathrm{U}(F)\times_{F}s(s^{-1}(\mathrm{S}(F)))\bigr|_{P_i}$ to a single current globally and \eqref{bordo.T} stays true everywhere.
\end{remark}

\begin{remark} \label{Fmax} Notice that if $F_{\max}$ represents a "maximal" critical manifold, in other words the critical manifold for which $S(F_{\max})$ is an open subset of $P$ the transversality condition  of $s\pitchfork S(F_{\max})$ is automatically satisfied for all points $b\in s^{-1}S(F_{\max})$. In the open set $s^{-1}(S(F_{\max}))\subset B$ relation \eqref{bordo.T} is trivial to prove and the limit 
\[ \lim_{t\ra \infty}\xi_t(s^{-1}(S(F_{\max})))=F_{\max}\times_{F_{\max}} s(s^{-1}(S(F_{\max}))
\]
holds even in the $C^{\infty}$ sense. The open sets $s^{-1}(S(F_{\max}))$ will always be part of the open coverings of $B$ and we will not mention them again.
\end{remark}

We will therefore work with a convenient covering of $B$, each of its members being contained into a trivializing neighborhood for $X$, by which we mean a neighborhood $U$ where \eqref{s3eq1} holds.

The strategy is now is \emph{roughly} the following.  We show that around each point $b_0\in B$ there exists a trivializing neighborhood $B_0$ and a finite open covering $M_j$ of the fiber $M:=P_{b_0}$ such that Theorem \ref{tlimxi} holds for the restrictions of all the currents to the open sets:
\[  M_j\times M\times B_0\simeq(M_j\times B_0)\times_{B_0}(M\times B_0)
\]
where we have already used a trivializing diffeomorphism $P\bigr|_{B_0}\simeq M\times B_0$.

In order to achieve this on each of the open sets $M_j\times M\times B_0$ we construct a "resolution of the flow", namely a \emph{proper} map
\[ \Psi: N\ra M_j\times M\times B_0
\]
from a manifolds with corners $N$ of dimension $n+1$ such that
\begin{itemize}
 \item[(a)]$\Imag \Psi =\overline{\xi([0,\infty)\times B_0)}\cap (M_j\times M\times B_0)$
 \item[(b)] $\Psi$ is a diffeomorphism from an open subset of full measure in $N$ to an open subset of full $\mathcal{H}^{n+1}$-measure in $\xi([0,\infty)\times B_0)\cap ( M_j\times M\times B_0)$.
 \end{itemize}
 
 Point (b) makes sense in view of the fact that $\xi([0,\infty)\times B_0\setminus Z)$ is a smooth submanifold of $P\times_B P$ of dimension $n+1$ where $Z:=\displaystyle\bigcup_F s^{-1}(F)$ are the fixed points of the section with respect to the flow. 
 \begin{remark} We assume of course that $B_0$ is a small neighborhood of a point $b_0\notin F_{\max}$ and therefore $Z$ will have zero $\mathcal{H}^{n}$ measure.
 \end{remark}

 We then use the following 
 \begin{lemma}\label{lema.ideia}Let $N^{n+1}$ be a manifold with corners  and let $\Psi: N\longrightarrow X$ be a smooth (Lipschitz is enough)  map to a smooth manifold $X$. If $\Psi$ is a proper map, then $\Psi(M)$ has locally finite $n+1$-dimensional Hausdorff measure  and
	$$d(\Psi_{*}(N))=\Psi_*(\partial N).$$
\end{lemma}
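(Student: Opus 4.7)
The plan is to realize $N$ itself as an $(n+1)$-dimensional integration current on its own body, then transfer the whole picture to $X$ via $\Psi_*$, using properness for well-definedness, local Lipschitz estimates for the mass and Hausdorff bounds, and the classical Stokes theorem for manifolds with corners for the boundary identity.

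First I would observe that the oriented manifold with corners $N^{n+1}$ canonically determines the integration current $[N]$ of locally finite mass, acting on compactly supported $(n+1)$-forms by $\omega\mapsto\int_N\omega$. Its stratified boundary $\partial N$, i.e. the union of codimension-one faces with the induced orientations, is an $n$-dimensional current on $N$, and the standard Stokes theorem on manifolds with corners yields
\[
d[N] \;=\; [\partial N].
\]

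Next, properness is precisely the hypothesis that lets one push forward: for any compactly supported test form $\omega$ on $X$, the preimage $\Psi^{-1}(\mathrm{supp}\,\omega)$ is compact in $N$, so $\Psi^{*}\omega$ has compact support and $\Psi_{*}[N](\omega):=\int_{N}\Psi^{*}\omega$ is a finite number. To upgrade this to locally finite mass and to the Hausdorff bound on $\Psi(N)$, I would fix any compact $K\subset X$. Since $\Psi$ is smooth and $\Psi^{-1}(K)$ is compact, $\Psi$ restricted to $\Psi^{-1}(K)$ is Lipschitz with some constant $L_K$, and then the standard Lipschitz image estimate together with $\Psi(N)\cap K=\Psi(\Psi^{-1}(K))$ gives simultaneously
\[
\bigl|\Psi_{*}[N](\omega)\bigr|\;\leq\;L_K^{\,n+1}\,\mathcal{H}^{n+1}\!\bigl(\Psi^{-1}(K)\bigr)\,\|\omega\|_\infty,\qquad \mathcal{H}^{n+1}\!\bigl(\Psi(N)\cap K\bigr)\;\leq\;L_K^{\,n+1}\,\mathcal{H}^{n+1}\!\bigl(\Psi^{-1}(K)\bigr),
\]
the right-hand side being finite because $\Psi^{-1}(K)$ is a relatively compact piece of a manifold with corners of dimension $n+1$.

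For the boundary identity, I would test on an arbitrary compactly supported $\omega$ on $X$. Smoothness of $\Psi$ gives $\Psi^{*}d\omega=d\Psi^{*}\omega$, and plugging this into the Stokes relation $d[N]=[\partial N]$ proved above yields
\[
d\bigl(\Psi_{*}[N]\bigr)(\omega)\;=\;[N](\Psi^{*}d\omega)\;=\;[N]\bigl(d\Psi^{*}\omega\bigr)\;=\;[\partial N](\Psi^{*}\omega)\;=\;\Psi_{*}[\partial N](\omega).
\]
There is no real conceptual obstacle; the one place to be careful is the application of Stokes on the (possibly non-compact) $N$, which requires the test form to be compactly supported on $N$ — and this is exactly what properness of $\Psi$ supplies, so the argument closes cleanly.
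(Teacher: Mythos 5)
Your proof is correct, and it is exactly the standard argument: the paper states this lemma without proof, treating it as a known fact (it is essentially Stokes' theorem for oriented manifolds with corners combined with the pushforward of locally rectifiable currents under proper Lipschitz maps, as in Federer). The only implicit hypothesis you rely on, which the statement also leaves implicit but the paper uses throughout (e.g.\ $\mathcal{A}_\sigma$ is always oriented), is that $N$ must be oriented for $[N]$ and $[\partial N]$ to be defined as currents; your identification of properness as precisely what makes $\Psi^{*}\omega$ compactly supported, so that Stokes applies on the non-compact $N$, is the right key point.
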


This Lemma   will allow  not only to conclude that the restriction of $T\bigr|_{B_0\times M_j\times M}$ is of locally finite mass, but also to compute its boundary as the push-forward of $\partial N $. In order to do that we will need a full understanding of $\partial N$ and of the map $\Psi$.

\vspace{0.5cm}

\section{The local flow resolution}\label{tec.tools}

This section contains the heart of the proof of Theorem \ref{tlimxi}. 

As we discussed in the previous section it is enough to localize around each point $b_0$ in the base space $B$. We will therefore consider first an open neighborhood $B_0$ of $b_0$  where the fiber bundle $P\bigr|_{B_0}$ is trivial and the vector field $X$ is horizontally constant  and given by the gradient of a function $f:P_{b_0}\ra \bR$. With the notation $M:=P_{b_0}$ already introduced we have the following data
\begin{itemize}
\item[(a)] a flow  on $M$ induced by the gradient of $f$ and denoted $\psi$.
\item[(b)] a family of (local) sections $s_t:B_0\ra  M\times B_0$ 
 \[s_t(b)=(\psi_t(s_0(b)),b)\] originating from $s\bigr|_{B_0}=(s_0,\id_{B_0}):B_0\ra P\bigr |_{B_0}=M\times B_0$. 
 \end{itemize}
 
 We are interested in the closure of the forward flow-out of the graph of $s$.  In other words let $\xi_t:B_0\ra M\times M\times B_0$ be the family of sections:
 \[ \xi_t(b)=(\psi_t(s_0(b)),s_0(b),b).
 \]
 The function $\tilde{f}:M\times M\times B_0\ra \bR$, $\tilde{f}(m_1,m_2,b):=f(m_1)$ will be used occasionally.  
 
We will aim to construct a "manifold with corners resolution" of a piece of  $\overline{\bigcup_{t\geq 0} \xi_t(B_0)}$ to be described momentarily.

 Let 
\[ {\bf{p}}_1:=\lim_{t\ra \infty} \psi_t(s(b_0))\in M\]
be a critical point.  We will assume next that ${\bf{p}_1}$ is not a point of  local maximum for $f$, i.e.  $\dim{U_{{\bf{p}_1}}}>0$.  The case $\dim{U}_{{\bf{p}_1}}=0$ can be treated quite easily as locally everything flows when $t\ra \infty$ to the critical manifold determined by $\{{\bf{p}_1}\}$ as already noticed in Remark \ref{Fmax}.

 For simplicity, we will also assume that $f({\bf{p}_1})=0$.  
 
 We allow the situation ${\bf{p}_1}=s_0(b_0)$.  At the other extreme, $s_0(b_0)$ might be "far" from ${\bf{p}_1}$. If that is the case, we notice that nothing interesting happens with the flow-out of $\xi_0(B_0)$ before we get close to $\{{\bf{p}_1}\}\times M\times B_0$. So we might assume without restriction of the generality that $s_0(b_0)$ is in a neighborhood $D$ of ${\bf{p}_1}$ where the Straighten Coordinates Theorem is valid.
Then we will also assume that $B_0$ was first chosen so that $s_0(B_0)\in \mathring{C_{\varepsilon}}$ for some fixed $\varepsilon$, where $C_{\varepsilon}\subset \bR^s\times \bR^{u}$ satisfies the hypothesis of Theorem \ref{teo.vizinhanca}. Hence $\xi_0(B_0)\subset \mathring{C_{\varepsilon}}\times M\times B_0$.

 We will need to work with certain particular neighborhoods of $\{|x|\cdot |y|=0\} \times M\times B_0$ of type $V_{\gamma}^{\epsilon}\times M\times B_0$ where $V_{\gamma}^{\epsilon}$ is as in Theorem  \ref{teo.vizinhanca}.

The next technical statement prepares the field for the next step of the induction, namely when we will go from the first critical level (that of ${\bf{p}_1}$) to the second critical level in the direction of the flow.
\begin{prop}\label{prop.triplo} 
\begin{itemize} \item[(a)] There exist   $\gamma\leq \varepsilon$ and $\theta>0$ a regular value of $f$  such that  the trajectory determined by any $p\in V^{\varepsilon}_{\gamma}\cap f^{-1}(-\infty,\theta]$ that intersects $f^{-1}\{\theta\}$ does so before intersecting $\partial C_{\varepsilon}^{-}$ when $t\ra \infty$.

\item[(b)] If ${\bf{p}_1}$ is not a point of minimum then we can take $\theta$ and $\gamma$ such that the trajectory determined by any $p\in V^{\varepsilon}_{\gamma}\cap f^{-1}[-\theta,\theta]$ that intersects $f^{-1}\{-\theta\}$ does so before intersecting $\partial C_{\varepsilon}^{+}$ when $t\ra -\infty$.
\end{itemize}

 Moreover, after shrinking $B_0$ the following holds: 
	
	\begin{itemize}
	\item[(i)] $s_0(B_0)\subset V_{\gamma}^{\varepsilon}\cap f^{-1}(-\infty,\theta')$ for some $\theta'$ with $0<\theta'<\theta$ and
	\item[(ii)] $\left(\bigcup_{t\geq0}\xi_t(B_0)\right)\cap \tilde{f}^{-1}(\theta)\subset V_{\gamma}^{\varepsilon}\times M\times B_0$
	\end{itemize}	
\end{prop}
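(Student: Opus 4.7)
The plan is to bound $f$ near the portions of $\partial C_{\varepsilon}$ that could obstruct the conclusion---$\partial^{-}C_{\varepsilon}$ near $U_{0}$ for (a) and $\partial^{+}C_{\varepsilon}$ near $S_{0}$ for (b)---and then combine these bounds with a forward flow-invariance of $V_{\gamma}^{\varepsilon}$ inside $C_{\varepsilon}$. Two monotonicity facts do most of the work: $f$ is strictly increasing along trajectories off the critical set, since $df(X)=|X|^{2}>0$, and $|y(u)|^{2}$ is strictly increasing along trajectories inside $C_{\varepsilon}$ (the definiteness calculation from the proof of Theorem \ref{teo.vizinhanca}, which moreover implies that once a trajectory exits through $\partial^{-}C_{\varepsilon}$ it cannot re-enter $C_{\varepsilon}$); symmetrically, $|x(u)|$ is strictly decreasing in $C_{\varepsilon}$, so forward trajectories starting in $\mathring{C}_{\varepsilon}$ can only exit through $\partial^{-}C_{\varepsilon}$.

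For (a), since ${\bf p_{1}}$ is not a local maximum one has $\dim U_{0}>0$, and in the straightened coordinates $f\bigr|_{U_{0}}$ vanishes only at the origin while being nonnegative on $U_{0}\cap C_{\varepsilon}$. Consequently $c_{1}:=\min\{f(0,y):|y|=\varepsilon\}>0$, and by continuity there exists $\gamma_{1}\in(0,\varepsilon]$ with $f(x,y)>c_{1}/2$ whenever $|x|\leq\gamma_{1}$ and $|y|=\varepsilon$. Using Sard, pick a regular value $\theta\in(0,c_{1}/4)$ and set $\gamma:=\gamma_{1}$. Now let $p\in V_{\gamma}^{\varepsilon}\cap f^{-1}(-\infty,\theta]$ have its forward trajectory meet $f^{-1}\{\theta\}$. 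If $p\in S_{0}$ the trajectory stays in $S_{0}$ where $f\leq 0$, contradiction. If $p\in U_{0}\setminus\{0\}$, the trajectory exits $\partial^{-}C_{\varepsilon}$ at a point where $f\geq c_{1}>\theta$, and $f$-monotonicity yields the crossing of $\theta$ strictly earlier. If $p\notin V_{0}^{\varepsilon}$, then $p\prec q\in\partial^{-}C_{\varepsilon}$ with $|x_{1}(q)|<\gamma$ forces $f(q)>c_{1}/2>\theta$, and $f$-monotonicity again concludes. Part (b) is the symmetric statement in backward time when $\dim S_{0}>0$: set $-c_{2}:=\max\{f(x,0):|x|=\varepsilon\}<0$, choose $\gamma_{2}$ with $f(x,y)<-c_{2}/2$ on $|x|=\varepsilon$, $|y|\leq\gamma_{2}$, and replace $\theta$ by a regular value in $(0,\min(c_{1}/4,c_{2}/4))$ and $\gamma$ by $\min(\gamma_{1},\gamma_{2})$.

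For the ``moreover'' clause, since ${\bf p_{1}}=\lim_{t\to\infty}\psi_{t}(s_{0}(b_{0}))$ we have $s_{0}(b_{0})\in S_{0}\cap\mathring{C}_{\varepsilon}$ and $f(s_{0}(b_{0}))\leq 0$. Choose $\theta':=\theta/2$. By Theorem \ref{teo.vizinhanca}(4), $V_{\gamma}^{\varepsilon}$ is a neighborhood of $V_{0}^{\varepsilon}\ni s_{0}(b_{0})$, so it contains an open neighborhood of $s_{0}(b_{0})$, which intersected with the open set $f^{-1}(-\infty,\theta')$ is still such a neighborhood; continuity of $s_{0}$ allows us to shrink $B_{0}$ to obtain (i). For (ii), given $(\psi_{t}(s_{0}(b)),s_{0}(b),b)$ with $f(\psi_{t}(s_{0}(b)))=\theta$, part (a) applied to $s_{0}(b)\in V_{\gamma}^{\varepsilon}$ (combined with the fact that the only forward exit of $\mathring{C}_{\varepsilon}$ is through $\partial^{-}C_{\varepsilon}$) guarantees that the trajectory is still inside $C_{\varepsilon}$ at the moment it hits $f=\theta$; it then suffices to check that $V_{\gamma}^{\varepsilon}$ is forward-flow-invariant inside $C_{\varepsilon}$. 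For $p\in V_{0}^{\varepsilon}$ this is the invariance of $S_{0}\cup U_{0}$, and for $p\prec q\in\partial^{-}C_{\varepsilon}$ with $|x_{1}(q)|<\gamma$, the strict monotonicity of $|y|^{2}$ in $C_{\varepsilon}$ forbids the trajectory from leaving $C_{\varepsilon}$ and returning, so $\psi_{u}(p)\prec q$ for every $u$ in the stay inside $C_{\varepsilon}$, whence $\psi_{u}(p)\in V_{\gamma}^{\varepsilon}$.

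The main obstacle lies in the ``corner'' region where $|x|$ and $|y|$ are both comparable to $\varepsilon$: there one must rule out trajectories that leave and re-enter $C_{\varepsilon}$ and also ensure that the boundary bound $f(q)>c_{1}/2$ survives under the passage from the pure unstable direction to a nearby boundary point of $\partial^{-}C_{\varepsilon}$. Flow-convexity and the strict monotonicity of $|y|$ inside $C_{\varepsilon}$---both consequences of the symmetry hypothesis on $L^{\pm}$ (cf.\ Remark \ref{Xtrans})---are precisely what make the forward-invariance of $V_{\gamma}^{\varepsilon}$ and the quantitative boundary estimate work in tandem.
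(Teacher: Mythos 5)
Your part (a) and items (i)--(ii) are correct and follow essentially the paper's own argument: bound $f$ from below on the exit window $T_{\gamma}=\{(x,y)\in\partial^{-}C_{\varepsilon}:\ |x|<\gamma\}$ by compactness of $U_{0}\cap\partial^{-}C_{\varepsilon}$ and continuity, take $\theta$ below that bound, and conclude with the monotonicity of $f$ along trajectories together with the flow-convexity of $V_{\gamma}^{\varepsilon}$ (for (ii), noting that every point $p'$ between $p$ and its forward exit $q$ still satisfies $p'\prec q$, hence lies in $V_{\gamma}^{\varepsilon}$).

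Part (b), however, has a genuine gap. You set $\gamma:=\min(\gamma_{1},\gamma_{2})$, where $\gamma_{2}$ is chosen so that $f<-c_{2}/2$ on $\{|x|=\varepsilon,\ |y|\leq\gamma_{2}\}\subset\partial^{+}C_{\varepsilon}$. But the parameter $\gamma$ in the definition of $V_{\gamma}^{\varepsilon}$ constrains $|x_{1}|$ at the \emph{forward} exit through $\partial^{-}C_{\varepsilon}$; it gives no direct control on $|y_{0}|$ at the \emph{backward} entrance through $\partial^{+}C_{\varepsilon}$, which is what your bound on $f$ requires. Smallness of $|x_{1}|$ does not imply smallness of $|y_{0}|$: already in the linear model $\dot{x}=-\lambda x$, $\dot{y}=\mu y$ with $0<\mu<\lambda$ one has $|x_{1}|=\varepsilon e^{-\lambda\tau}$ and $|y_{0}|=\varepsilon e^{-\mu\tau}$, i.e. $|y_{0}|=\varepsilon^{1-\mu/\lambda}|x_{1}|^{\mu/\lambda}\gg|x_{1}|$ as $|x_{1}|\to 0$, so $|x_{1}|<\gamma_{2}$ does not place the entrance point in your window. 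What is needed is the time-reversed analogue of item (3) of Theorem \ref{teo.vizinhanca} (continuity of the inverse Dulac map near $U_{0}$): for every $\gamma_{2}>0$ there is $\gamma_{0}>0$ such that $|x_{1}|\leq\gamma_{0}$ at the forward exit forces $|y_{0}|\leq\gamma_{2}$ at the backward entrance; this is proved by the same compactness/contradiction argument as (3), or one can instead invoke item (4), $V_{\gamma}^{\varepsilon}\to V_{0}^{\varepsilon}$, to make $\sup f$ on $V_{\gamma}^{\varepsilon}\cap\partial^{+}C_{\varepsilon}$ converge to $\sup_{S_{0}\cap\partial^{+}C_{\varepsilon}}f<0$. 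With $\gamma:=\min(\gamma_{1},\gamma_{0})$ the rest of your argument for (b) goes through.
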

\begin{proof} In the cube $C_{\varepsilon}$ of Theorem   \ref{teo.vizinhanca} there exists  $0<\gamma<\varepsilon$ such that
	$$\inf_{u\in T_{\gamma}}{f}(u)>0,$$
where $T_{\gamma}=\{(x,y)\in \partial^{-}{C}_{\varepsilon}; |x|<\gamma, |y|=\varepsilon\}$ since $f>0$ on the compact $\partial^{-}{C}_{\varepsilon}\cap U_{{\bf{p}_1}}$. Fix such a $\gamma$.

Choose now $\theta>0$ with
\begin{equation}\label{defthe}  0<\theta< \inf_{u\in T_{\gamma}}f(u)
\end{equation}
Notice that each trajectory that starts inside $V^{\varepsilon}_{\gamma}$ either ends up at the critical point or leaves  $V^{\varepsilon}_{\gamma}$ through $T_{\gamma}$. Let $p\in V^{\varepsilon}_{\gamma}\cap f^{-1}(-\infty,\theta]$. On one hand $f$ is increasing and continuous along the trajectories and on the other hand $V^{\epsilon}_{\gamma}\cap\gamma_p$  \footnote{the trajectory $\gamma_p$ is determined by $p$}is connected by the flow-convex property of $V^{\varepsilon}_{\gamma}$. It follows that $\gamma_p$ meets $f^{-1}(\theta)$ before reaching $T_{\gamma}$ by (\ref{defthe}).

Part (b) is analogous.

Since $s_0(b_0)\in S_{{\bf{p}_1}}$ we have that $f(s(b_0))\leq 0$ and therefore one can choose $B_0$ with $s_1(B_0)\subset V_{\gamma}^{\epsilon}\cap f^{-1}(-\infty,\theta')$. 

Part (ii) is an immediate consequence of (i) and the first part of the proof.
\end{proof}

\vspace{0.5cm} 

From now on the neighborhood $B_0$ of $b_0$ will satisfy the properties of Proposition \ref{prop.triplo} for a certain $\theta$ and $\gamma$.

We define now the first piece of the transverse intersection we will use later.

Let $\tilde {C_{\varepsilon}}:=C_{\varepsilon}\times C_{\ve}\times B_0$ and $\mathring{\tilde{C_{\varepsilon}}}:=\mathring{C_{\varepsilon}}\times \mathring{C_{\epsilon}}\times B_0$ and consider $W_1\subset  [0,1]\times\mathring{\tilde {C_{\varepsilon}}} $  be the analogue of $W$ from Theorem \ref{teo.subvde} for this context:
\begin{eqnarray}\label{W1def}
W_1 
&=&\left\{\left(t,\psi_\frac{t}{1-t}(x,y),x,y,b\right); \; 0<t<1\right\}\cap \mathbb{R}\times\mathring{{C_{\varepsilon}}}\times\mathring{{C_{\varepsilon}}}\times B_0.\nonumber
\end{eqnarray}
In other words, modulo a permutation of the last two  coordinates we have:
\[ W_1= W\times B_0\subset ([0,1]\times\mathring{C_{\varepsilon}}\times \mathring{C_{\varepsilon}})\times  B_0
\]
where  $W$ is as in Theorem \ref{teo.subvde}. 

It  follows then from Theorem \ref{teo.subvde}  that $\overline{W_1}$, the closure inside $[0,1]\times\mathring{ {C_{\varepsilon}}}\times \mathring{ {C_{\varepsilon}}}\times B_0$ is a smooth $(m + 1+n)$-dimensional with boundary:
\begin{eqnarray*}
\partial\overline{W_1}=\underbrace{\{1\}\times({\mathrm{U}}_{{\bf{p}_1}}\cap \mathring{{C_{\varepsilon}}}) \times ({\mathrm{S}}_{{\bf{p}_1}}\cap\mathring{{C_{\varepsilon}}})\times  B_0}_{\partial_1{\overline{W_1}}}\bigcup\underbrace{\{0\}\times \Delta_{\mathring{{C_{\varepsilon}}}}\times B_0}_{\partial_0\overline{W_1}}.
\end{eqnarray*}

For future reference,  let
\begin{equation}\label{Wp1}\overline{W}_{{\bf{p}_1}}:=[0,1]\times\{{\bf{p}_1}\}\times\{{\bf{p}_1}\}\times  B_0\subset \overline{W_1}.
\end{equation}
be the set of fixed points in $\overline{W}_1$.

We look now at the second piece of transverse intersection mentioned before. Let 
\[V_{\theta}:= \mathring{V}^{\epsilon}_{\gamma}\cap f^{-1}((-\infty,\theta])\subset \mathring{C_{\varepsilon}}
\] where $\epsilon$, $\gamma$ and $\theta$ are as in Proposition \ref{prop.triplo}. 
\begin{figure}[h]
	\centering
	\caption{{The neighborhoods $V_{\theta}$}} \label{fig02}
	\includegraphics[scale=0.5]{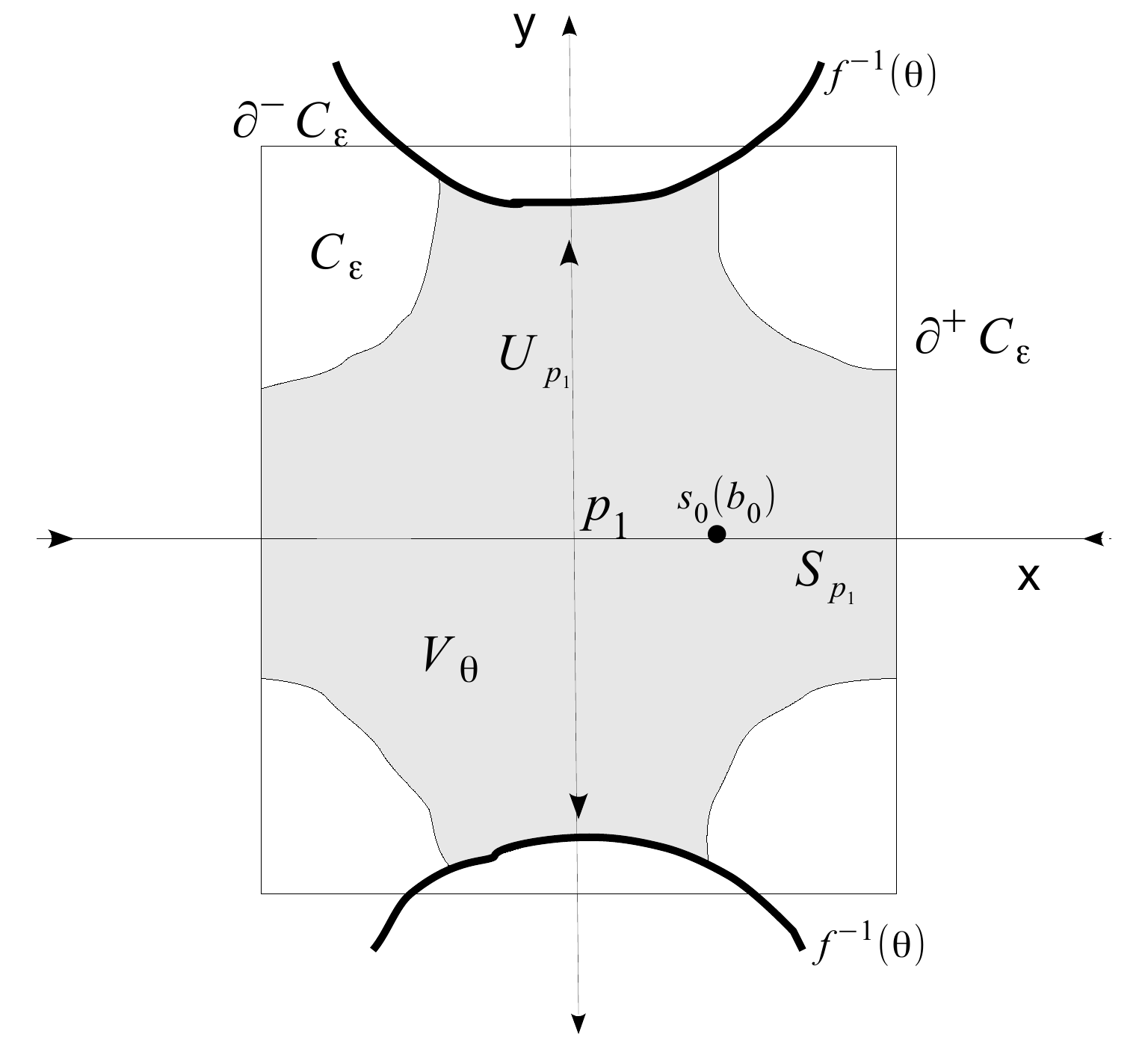}
\end{figure}

Consider the following set: 
 \[ Z_1:=\bR\times {V}_{\theta}\times s(B_0) \subset \bR\times \mathring{{{C}_{\varepsilon}}}\times (\mathring{{{C}_{\varepsilon}}}\times B_0).\] 
  
 \begin{lemma}\label{lem.Zbordo}  The set $Z_1$ is a manifold of dimension $m+n+1$ with boundary 
 \[\partial Z_1= \bR\times(\mathring{V}^{\epsilon}_{\gamma}\cap {f}^{-1}(\theta))\times s(B_0).\]
 \end{lemma}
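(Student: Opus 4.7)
The plan is to decompose $Z_1$ as a Cartesian product and analyze each factor separately. Since $Z_1 = \mathbb{R}\times V_{\theta}\times s(B_0)$, I would first verify that each of the three factors is a smooth manifold (with or without boundary), and then invoke the standard fact that a product of smooth manifolds in which at most one factor has non-empty boundary is itself a smooth manifold with boundary, the boundary appearing only in the distinguished factor.

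The factor $\mathbb{R}$ is trivially a one-dimensional manifold without boundary. The factor $s(B_0)$ is a smooth $n$-dimensional embedded submanifold of $\mathring{C_{\varepsilon}}\times B_0$ without boundary: this is because $s|_{B_0}=(s_0,\mathrm{id}_{B_0})$ is a smooth section whose image is the graph of the smooth map $s_0$ over the open set $B_0$, so $s$ restricts to a diffeomorphism $B_0\to s(B_0)$.

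The key step is to show that $V_{\theta}=\mathring{V}^{\varepsilon}_{\gamma}\cap f^{-1}((-\infty,\theta])$ is a smooth $m$-dimensional manifold with boundary $\mathring{V}^{\varepsilon}_{\gamma}\cap f^{-1}(\theta)$. Here I use crucially that $\theta$ was chosen in Proposition~\ref{prop.triplo} as a \emph{regular} value of $f$. By the submersion theorem, for every $p\in f^{-1}(\theta)\cap\mathring{V}^{\varepsilon}_{\gamma}$ there exist smooth coordinates $(u,x_2,\ldots,x_m)$ on an open neighborhood of $p$ in $\mathring{C_{\varepsilon}}$ with $u=f-\theta$; in these coordinates, $f^{-1}((-\infty,\theta])$ is the closed half-space $\{u\leq 0\}$. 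Intersecting with the open set $\mathring{V}^{\varepsilon}_{\gamma}$ (which is open because $V^{\varepsilon}_{\gamma}$ is open in $C_{\varepsilon}$ by item (4) of Theorem~\ref{teo.vizinhanca}, and we take its interior in $\mathring{C_{\varepsilon}}$) preserves this local half-space chart at boundary points. At interior points $p\in\mathring{V}^{\varepsilon}_{\gamma}\cap f^{-1}((-\infty,\theta))$, the set $V_{\theta}$ agrees locally with an open subset of $\mathbb{R}^m$. Combining both cases, $V_{\theta}$ is a smooth manifold of dimension $m$ with boundary exactly $\mathring{V}^{\varepsilon}_{\gamma}\cap f^{-1}(\theta)$.

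Since only the middle factor has non-empty boundary, the product $Z_1=\mathbb{R}\times V_{\theta}\times s(B_0)$ inherits the structure of a smooth $(m+n+1)$-dimensional manifold with boundary, and
\begin{equation*}
\partial Z_1=\mathbb{R}\times \partial V_{\theta}\times s(B_0)=\mathbb{R}\times\bigl(\mathring{V}^{\varepsilon}_{\gamma}\cap f^{-1}(\theta)\bigr)\times s(B_0),
\end{equation*}
as claimed. I do not anticipate any serious obstacle: the only point requiring care is regularity of the level $\theta$, which was built into the choice of $\theta$ in Proposition~\ref{prop.triplo}, together with the observation that $\mathring{V}^{\varepsilon}_{\gamma}$ is open in $\mathring{C_{\varepsilon}}$ so that its intersection with a manifold with boundary remains a manifold with boundary.
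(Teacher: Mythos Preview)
Your proof is correct and follows essentially the same approach as the paper's: the paper's argument is the one-line observation that for any regular value $\theta$, the intersection $U\cap f^{-1}((-\infty,\theta])$ is a manifold with boundary whenever $U$ is open, which is precisely your analysis of the middle factor $V_\theta$. You have simply spelled out in more detail the product structure and the verification that the other two factors $\mathbb{R}$ and $s(B_0)$ are boundaryless manifolds of the correct dimension.
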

 \begin{proof} For any regular value $\theta$ the intersection $U\cap {f}^{-1}(-\infty, \theta]$ is a manifold with boundary for any open $U\subset M$ such that $f^{-1}(\theta)\cap U\neq \emptyset$.
 \end{proof}
 \begin{lemma} \label{s4l1}
The manifolds with boundary $Z_1$ and $\overline{W_1}$ are completely transverse inside $\bR\times\mathring{{{C}_{\varepsilon}}}\times \mathring{{{C}_{\varepsilon}}}\times B_0$, meaning that the different strata are all transverse. This implies that $Z_1\cap \overline{W_1}$ is a manifold with corners of dimension $\dim{B}+1$.
 \end{lemma}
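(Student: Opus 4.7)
The plan is to verify complete transversality stratum by stratum. Write the natural stratifications
\[
\overline{W_1} = \mathring{W_1} \sqcup \partial_0\overline{W_1} \sqcup \partial_1\overline{W_1},\qquad
Z_1 = \mathring{Z_1} \sqcup \partial Z_1,
\]
where $\mathring{Z_1}:=\bR\times(\mathring{V}^\varepsilon_\gamma\cap f^{-1}(-\infty,\theta))\times s(B_0)$. This gives six pairs of strata. One pair is immediately vacuous: $\partial_0\overline{W_1}\cap\partial Z_1$ is empty, because $\partial_0\overline{W_1}$ sits over the diagonal (forcing $v=s_0(b)$) while $\partial Z_1$ requires $v\in f^{-1}(\theta)$, but by (i) of Proposition \ref{prop.triplo} one has $f(s_0(b))<\theta'<\theta$. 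For the remaining five pairs I would write out the tangent spaces explicitly.

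At a point $p=(t,v,s_0(b),b)\in W_1\cap Z_1$ (so necessarily $v=\psi_{t/(1-t)}(s_0(b))$), the tangent space to $W_1$, parametrized by $(a,v',w)\in\bR\times\bR^m\times T_bB_0$, is
\[
T_pW_1=\bigl\{\bigl(a,\;aX(v)/(1-t)^2+d\psi_{t/(1-t)}(s_0(b))\cdot v',\;v',\;w\bigr)\bigr\},
\]
while $T_pZ_1=\{(a',u,ds_0(b)\cdot w',w')\}$ with $u\in T_vV_\theta$ (or $u\in T_vf^{-1}(\theta)$ on $\partial Z_1$). Interior-interior transversality is then immediate: the second $\bR^m$ slot is hit surjectively by $v'$ from $T_pW_1$, the first $\bR^m$ slot is free in $T_pZ_1$ (since $v$ lies in the open $V_\theta$), and both the $t$- and $B_0$-directions are covered. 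For $W_1\pitchfork\partial Z_1$ the only new issue is the first $\bR^m$ slot, where $T_v f^{-1}(\theta)$ has codimension one; the missing direction is supplied by the flow term $aX(v)/(1-t)^2$, which is transverse to $T_vf^{-1}(\theta)$ because $X=\nabla f$ and $\theta$ is a regular value.

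For $\partial_0\overline{W_1}\pitchfork\mathring{Z_1}$, at $(0,s_0(b),s_0(b),b)$ the tangent space $\{(0,u,u,w):u\in\bR^m, w\in T_bB_0\}$ and $T_pZ_1$ together span the ambient space since the diagonal requirement $u=u$ in $\partial_0$ is compensated by the free first-slot contribution from $Z_1$. The remaining cases $\partial_1\overline{W_1}\pitchfork\mathring{Z_1}$ and $\partial_1\overline{W_1}\pitchfork\partial Z_1$ are where the two key hypotheses enter. At a point $(1,v,s_0(b),b)$ with $v\in U_{\mathbf{p}_1}$ and $s_0(b)\in S_{\mathbf{p}_1}$, the tangent space of $\partial_1\overline{W_1}$ is $\{0\}\times T_vU_{\mathbf{p}_1}\times T_{s_0(b)}S_{\mathbf{p}_1}\times T_bB_0$. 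Summing with $T_pZ_1$, the third slot requires $T_{s_0(b)}S_{\mathbf{p}_1}+ds_0(b)(T_bB_0)=\bR^m$, which is precisely the assumption $s_0\pitchfork S_{\mathbf{p}_1}$ (see Remark \ref{rem123}). For $\partial Z_1$, the second slot additionally requires $T_vU_{\mathbf{p}_1}+T_vf^{-1}(\theta)=\bR^m$, and this holds because $U_{\mathbf{p}_1}$ is flow-invariant so $\nabla f(v)\in T_vU_{\mathbf{p}_1}$, while $\nabla f(v)\notin T_vf^{-1}(\theta)$ since $\theta$ is regular and $v\neq\mathbf{p}_1$ (as $f(\mathbf{p}_1)=0\neq\theta$).

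The dimension count $(m+n+1)+(m+n+1)-(2m+n+1)=n+1$ then yields the stated dimension of $Z_1\cap\overline{W_1}$, and the stratified transversality just verified shows that the intersection carries the structure of a manifold with corners. The hardest step is purely bookkeeping: one must identify, for each of the five pairs, which of the two transversality inputs (regularity of $\theta$, or $s_0\pitchfork S_{\mathbf{p}_1}$) is needed to complete the tangent-space surjectivity.
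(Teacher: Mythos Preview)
Your proof is correct and follows essentially the same approach as the paper's own proof: a stratum-by-stratum verification of transversality, with the same identification of the vacuous pair $\partial_0\overline{W_1}\cap\partial Z_1$ via Proposition~\ref{prop.triplo}(i), and the same two key inputs (regularity of $\theta$ giving $\nabla f\pitchfork f^{-1}(\theta)$ for the $\partial Z_1$ cases, and $s_0\pitchfork S_{\mathbf{p}_1}$ for the $\partial_1\overline{W_1}$ cases). The only difference is stylistic: the paper handles the interior-interior case more tersely by observing that $W_1$ is a graph over $\bR\times\mathring{C}_\varepsilon\times B_0$ while the first spatial factor of $\mathring{Z}_1$ is open in $\mathring{C}_\varepsilon$, whereas you write out the tangent spaces explicitly.
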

 \begin{proof} Notice that $\mathring{\overline{W}_1}=W_1$.
 	
We start  with the intersection ${W}_1\cap \mathring{Z}_{\theta}$. The transversality is immediate from the fact that $W_1$ is a graph over the first plus the last two variables, i.e. $\bR\times\mathring{{{C}_{\varepsilon}}}\times B_0$ while the first component of $\mathring{Z}_{\theta}$ is an open subset of $\mathring{{{C}_{\varepsilon}}}$.

A similar reasoning applies for $q\in \partial_0 \overline{W_1}\cap \mathring{Z}_{\theta}$. In fact due to the transversality of the flow to a regular level set $f^{-1}(\theta)$ this also proves the transversality of ${W_1}$ with $\partial {Z}_{\theta}$.

Notice that $\partial_0{\overline{W_1}}\cap \partial {Z}_{\theta}=\emptyset$ since a point  $q\in\xi_0(B_0)$ cannot satisfy $\tilde{f}(q)=\theta$ due to property (i) of Proposition \ref{prop.triplo}.

 	The transversality of $\partial_1\overline{W_1}$ with $\mathring{Z_1}$ follows from the transversality of $s(B_0)$ and $S_{{\bf{p}_1}}\times B_0$.
	
 	Finally the transversality of $\partial_1\overline{W_1}$ with $\partial Z_1$ follows from the transversality of $\mathrm{U}_{{\bf{p}_1}}$ and $\tilde{f}^{-1}(\theta)$ together with the transversality of $s(B_0)$ and $S_{{\bf{p}_1}}\times B_0$.
 \end{proof}
 
 \begin{lemma}\label{lema.transv} Let $N_1$ and $N_2$ be submanifolds with corners of type $k$ and $l$ respectively, inside a manifold (with no corners) $N$. If they are completely transverse then $N_1\cap N_2$ is a manifold with corners of type at most $k+l$.
 \end{lemma}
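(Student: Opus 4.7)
The plan is to work locally at a point $p \in N_1 \cap N_2$ and exploit the stratified structure of a submanifold with corners. A submanifold with corners $N_i$ of type $r$ can, locally around any point $q \in N_i$ lying in a depth-$j$ stratum ($j\leq r$), be described via local coordinates on the ambient manifold as the set cut out by $h_1=\cdots=h_{a}=0$ (smooth equations defining the underlying ambient \emph{smooth} submanifold $\tilde N_i$ of codimension $a=\codim N_i$) together with inequalities $g_1\geq 0,\ldots, g_j\geq 0$, where the $g$'s restrict to a coordinate system on $\tilde N_i$ transverse to the depth-$j$ stratum through $q$.

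First I would fix $p\in N_1\cap N_2$, and denote by $S_1$, $S_2$ the (unique) strata of $N_1$ and $N_2$ through $p$, of depths $i\leq k$ and $j\leq l$. Then I would write down defining data $h_1,\ldots,h_a, g_1,\ldots, g_i$ for $N_1$ and $h'_1,\ldots, h'_b, g'_1,\ldots, g'_j$ for $N_2$ as above, so that $\tilde N_1 = \{h=0\}$, $\tilde N_2=\{h'=0\}$ and $S_1=\tilde N_1\cap\{g=0\}$, $S_2=\tilde N_2\cap\{g'=0\}$. The complete transversality hypothesis, taken in particular for the pair of strata $(S_1,S_2)$, forces $T_pS_1 + T_pS_2 = T_pN$; equivalently the $1$-forms $dh_1,\ldots,dh_a, dg_1,\ldots, dg_i, dh'_1,\ldots, dh'_b, dg'_1,\ldots, dg'_j$ are linearly independent at $p$.

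From this linear independence, two conclusions follow. One, $\tilde N_1\pitchfork \tilde N_2$ at $p$, so $N':=\tilde N_1\cap \tilde N_2$ is a smooth submanifold of $N$ of codimension $a+b$ in a neighborhood of $p$. Two, the restrictions of $g_1,\ldots, g_i, g'_1,\ldots, g'_j$ to $N'$ still have linearly independent differentials at $p$, by the standard extraction argument from the list above. The implicit function theorem then yields local coordinates $(u_1,\ldots, u_{i+j}, v_1,\ldots, v_{d-i-j})$ on $N'$ in which $g_\alpha = u_\alpha$ for $\alpha\leq i$ and $g'_\beta = u_{i+\beta}$ for $\beta\leq j$, so that
\[
N_1\cap N_2 \;=\; N'\cap \{g_1\geq 0,\ldots, g_i\geq 0, g'_1\geq 0,\ldots, g'_j\geq 0\}
\]
coincides, in these coordinates, with $[0,\infty)^{i+j}\times \mathbb{R}^{d-i-j}$, a manifold with corners of type $i+j\leq k+l$.

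The only genuinely delicate point in the argument is the passage from complete transversality (a statement about stratum tangent spaces) to the simultaneous linear independence of \emph{all} the defining covectors $dh_\bullet, dg_\bullet, dh'_\bullet, dg'_\bullet$ at $p$; once this independence is established, the implicit function theorem produces the desired chart and the rest is bookkeeping. The ``at most'' in the statement reflects the fact that $i\leq k$ and $j\leq l$, and that the corner depth of $N_1\cap N_2$ at $p$ is exactly $i+j$.
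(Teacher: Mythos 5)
Your argument is correct, but it is not the route the paper takes. The paper uses the standard product trick: $N_1\times N_2$ is a manifold with corners of type $k+l$ in $N\times N$, complete transversality of $N_1$ and $N_2$ is equivalent to complete transversality of $N_1\times N_2$ with the diagonal $\Delta$ (a submanifold \emph{without} corners), and the conclusion is then quoted from Proposition A.3 of \cite{Ci3}, which handles the intersection of a manifold with corners with a transverse boundaryless submanifold. Your proof instead works directly in local coordinates: at $p$ you take the deepest strata $S_1$, $S_2$ through $p$, observe that transversality of $S_1$ and $S_2$ forces the annihilators $(T_pS_1)^{\perp}$ and $(T_pS_2)^{\perp}$ to meet trivially, hence the full list of defining covectors $dh_\bullet, dg_\bullet, dh'_\bullet, dg'_\bullet$ is independent, and then the implicit function theorem hands you a corner chart of depth $i+j$ on $\tilde N_1\cap\tilde N_2$. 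What the paper's approach buys is brevity and the reuse of a previously established result (only the corners-versus-no-corners case needs a local analysis); what your approach buys is self-containedness and an explicit identification of the corner depth of $N_1\cap N_2$ at each point as the sum of the depths of the strata through it, which makes the bound $k+l$ transparent. The one point to state carefully in your write-up is the local normal form you start from (the "neat" presentation $\{h=0,\ g_1\geq 0,\dots,g_j\geq 0\}$ with all differentials independent), since everything else hinges on it; with that as the working definition of submanifold with corners, your argument is complete.
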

 \begin{proof} We use a standard trick. Clearly, $N_1\times N_2$ is a manifold with corners of type $k+l$. Then the complete transversality of $N_1$ and $N_2$ is equivalent with the complete transversality of $N_1\times N_2$ and the diagonal submanifold $\Delta$ in $N\times N$. The conclusion then is that $(N_1\times N_2)\cap \Delta$ is a manifold with corners (for this see Proposition A.3 in \cite{Ci3}).
 \end{proof}
 
Following Lemma \ref{lem.Zbordo} denote
 \begin{equation}
 \mathcal{A}_1:= Z_1\cap \overline{W_1}
 \end{equation}
  The codimension $1$ boundary has the following decomposition in components:
\begin{eqnarray}
\partial^1\mathcal{A}_1&=&\underbrace{\{1\}\times(\mathrm{U}_{{\bf{p}_1}}\cap f^{-1}(-\infty,\theta])\times  s(s^{-1}(\mathrm{S}_{{\bf{p}_1}}\times B_0))}_{\partial^1_1\mathcal{A}_1}\bigcup \underbrace{\{0\}\times {\xi_0(B_0)}}_{\partial^1_0\mathcal{A}_1}\bigcup\nonumber\\
&&\bigcup \underbrace{\overline{W_1}\cap ([0,1]\times{f}^{-1}(\theta)\times s(B_0))}_{\partial^1_2\mathcal{A}_1}\label{bordo.A}.
\end{eqnarray}
where
 \[ \partial^1_1\mathcal{A}_1:=\partial_1\overline{W_1}\cap Z_1, \quad  \partial^1_0\mathcal{A}_1:=\partial_0\overline{W_1}\cap Z_1, \quad \partial^1_2\mathcal{A}_1:=\overline{W_1}\cap \partial Z_1.\]

The codimension  $2$ stratum is given by
\begin{equation}\label{bordo.A3}
 \partial^2\mathcal{A}_1: = \partial^1_1\mathcal{A}_1\cap\partial^1_2\mathcal{A}_1=\{1\}\times(\mathrm{U}_{{\bf{p}_1}}\cap {f}^{-1}(\theta))\times s(s^{-1}(\mathrm{S}_{{\bf{p}_1}}\times B_0)).\end{equation} 
\vspace{0.5cm}

We define the resolution map now. Let
 \[ \mathcal{R}:\mathcal{A}_1\ra V_{\theta}\times M\times B_0\] be the restriction to $\mathcal{A}_1$ of the projection onto the  three  spacial coordinates. Notice that in fact the image of $\mathcal{R}$ is contained in $V_{\theta}\times V_{\theta}\times B_0$.
 
 \begin{prop}\label{prop.propria} The map $\mathcal{R}$ is proper.
\end{prop}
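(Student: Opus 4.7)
The plan is to verify properness directly through sequential compactness, as all the spaces involved are metrizable subsets of finite-dimensional manifolds. Given a compact set $K \subset V_{\theta} \times M \times B_0$, I pick an arbitrary sequence $(t_n, p_{1,n}, p_{2,n}, b_n) \in \mathcal{R}^{-1}(K)$ and aim to extract a subsequence converging to a point of $\mathcal{A}_1$. By compactness of $K$, a subsequence of $(p_{1,n}, p_{2,n}, b_n)$ converges to some $(p_1^*, p_2^*, b^*) \in K$; then, since $t_n \in [0,1]$, passing to a further subsequence produces a limit $t^* \in [0,1]$.

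The key step is to verify that the candidate limit $(t^*, p_1^*, p_2^*, b^*)$ actually lies in $\mathcal{A}_1 = Z_1 \cap \overline{W_1}$. Membership in $Z_1 = \bR \times V_{\theta} \times s(B_0)$ is essentially by construction: $p_1^* \in V_{\theta}$ because $(p_1^*, p_2^*, b^*) \in K$; and since points of $\mathcal{A}_1 \subset Z_1$ satisfy $p_{2,n} = s_0(b_n)$, continuity of $s_0$ gives $p_2^* = s_0(b^*) \in s_0(B_0)$ with $b^* \in B_0$, so $(p_2^*, b^*) = s(b^*) \in s(B_0)$.

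Membership in $\overline{W_1}$ then reduces to checking that the limit still belongs to the ambient space $[0,1] \times \mathring{C_{\varepsilon}} \times \mathring{C_{\varepsilon}} \times B_0$ inside which $\overline{W_1}$ is defined as a closed set. This in turn follows from (a) $p_1^* \in V_{\theta} \subset \mathring{V}^{\varepsilon}_{\gamma} \subset \mathring{C_{\varepsilon}}$; (b) $p_2^* = s_0(b^*) \in s_0(B_0) \subset \mathring{C_{\varepsilon}}$ by the standing assumption that $s_0(B_0) \subset \mathring{C_{\varepsilon}}$ made at the start of the section; and (c) $b^* \in B_0$ because $K$ projects into $B_0$. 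With these in hand, $(t^*, p_1^*, p_2^*, b^*) \in \overline{W_1}$ as a limit of elements of $\overline{W_1}$ in the appropriate ambient.

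The only conceivable obstacle is the escape of some component of the limit to the boundary of the ambient box, namely to $\partial C_{\varepsilon}$ (in the $p_1$ or $p_2$ slots) or to $\partial B_0$ (in the base). The whole point of defining the codomain as $V_{\theta} \times M \times B_0$, rather than all of $\mathring{C}_{\varepsilon} \times M \times B_0$, is to preclude precisely such escape: compactness of $K \subset V_{\theta} \times M \times B_0$ forces $p_1^*$ to remain in $V_{\theta}$ and $b^*$ to remain in $B_0$, while the constraint $p_{2,n} = s_0(b_n)$ inherited from $Z_1$ combined with $s_0(B_0) \subset \mathring{C_{\varepsilon}}$ keeps $p_2^*$ away from $\partial C_{\varepsilon}$. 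Thus properness holds without any further estimates, resting only on the definitions and the elementary continuity of $s_0$.
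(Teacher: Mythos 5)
Your proof is correct and follows essentially the same approach as the paper: extract a convergent subsequence using compactness of $K$ and $[0,1]$, verify the limit lies in $Z_1$ directly from the codomain restriction and continuity of $s_0$, and then conclude membership in $\overline{W_1}$ from the fact that $\overline{W_1}$ is by construction closed in the ambient $[0,1]\times\mathring{C}_\varepsilon\times\mathring{C}_\varepsilon\times B_0$, into which you check the limit falls. The paper's proof splits into the cases $t_n\to t'<1$ and $t_n\to 1$ and in the first case identifies the limit explicitly via the flow equation, but as you observe this is not necessary — the closedness of $\overline{W_1}$ in the ambient handles both cases uniformly, which makes your version slightly more economical.
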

\begin{proof} Recall that for locally compact metric spaces $X,Y$, a map $F:X\ra Y$ is proper if and only if for any sequence $(x_n)_{n\in \bN}\in X$  such that $\displaystyle\lim_{n\ra \infty}x_n=\infty$ one has $\displaystyle\lim_{n\ra\infty} F(x_n)=\infty$. By definition, 
\[ \lim_{n\ra \infty} x_n=\infty
\]
if for every $K\subset X$ compact there exists $n_0\in \bN$ such that $x_n\in X\setminus K$ for all $n\geq n_0$. Notice that such a sequence does not have any convergent subsequence in $X$ and the converse is also true. It follows easily then that a map $F:X\ra Y$
is \ul{not} proper if and only if there exists $(x_n)_{n\in \bN}\in X,$ $x_n\ra \infty$ such that $F(x_n)\ra y\in Y$.

Assume therefore that $(u_n)_{n\in \bN}\in \mathcal{A}_1$ satisfies $u_{n}\ra \infty$ while $\mathcal{R}(u_{n})\ra \tilde{u}$.  Now $u_n$ has $4$ components:
\begin{equation}\label{equn} u_n=(t_n,a_n',a_n,b_n)\in [0,1]\times V_{\theta}\times  V_{\theta}\times B_0
\end{equation}
 By passing to a subsequence of $u_n$ we can assume $t_n$ converges. There are two possibilities.

Either $t_n\ra t'<1$ or $t_n\ra 1$. We analyze them separately. 

First, the triple $\mathcal{R}(u_{n})=(a_n',a_n,b_n)$ converges in $V_{\theta}\times V_{\theta}\times B_0$. From $u_n\in Z_1$ we get  $(a_n,b_n)=(s_0(\beta_n),\beta_n)$ for some $\beta_n\in B_0$.  Hence $\beta_n$ converges to $\beta\in B_0$ and $s_0(\beta_n)\ra s_0(\beta)$.

If $t_n\ra t'<1$ then for $n$ big enough we have that $u_n\in \overline{W_1}\setminus \{t=1\}$ and therefore 
\[a_n'=\psi_{\frac{t_n}{1-t_n}}(a_n)
\]
Hence $a_n=s_0(\beta_n)\ra s_0(\beta)$ and $a_n'\ra \psi_{\frac{t'}{1-t'}}(s_0(\beta))$ and by hypothesis this is in $V_{\theta}$. We conclude that

\[ u_n\ra \left(t', \psi_{\frac{t'}{1-t'}}(s_0(\beta)),s_0(\beta),\beta\right)
\]
and this limit belongs  to $Z_1\cap \overline{W_1}=\mathcal{A}_1$. Contradiction with $u_n\ra \infty$.

If $t_n\ra 1$ we have that $u_n=(t_n,a_n',s_0(\beta_n),\beta_n)$ and the convergence of $(a_n',s_0(\beta_n),\beta_n)$ to a point in $V_{\theta}\times V_{\theta}\times B_0$ implies again that $\beta_n\ra \beta\in B_0$. Since $s_0(B_0)\subset V_{\theta}$ we have that $s_0(\beta_n)\ra s_0(\beta)\in V_{\theta}$.  We get therefore that $u_n$ converges in $[0,1]\times V_{\theta}\times V_{\theta}\times  B_0 $ since all its coordinates converge. In order to reach a contradiction we need only check that it converges to some element of $Z_1\cap \overline{W_1}$. We have that
\[u_n\ra u\in  \bR\times\mathring{{{C}_{\varepsilon}}}\times \mathring{{{C}_{\varepsilon}}}\times B_0.\]

 On the other hand, since $u_n\in\overline{W_1}$ and the closure of $W_1$ is taken within $ \bR\times\mathring{{{C}_{\varepsilon}}}\times \mathring{{{C}_{\varepsilon}}}\times B_0$ we get that $u\in \overline{W_1}$. One sees easily that $u\in Z_1$ since $(a_n',s_0(\beta_n),\beta_n)=\mathcal{R}(u_n)$ converges to a point $(a',s_0(\beta),\beta)\in V_{\theta}\times V_{\theta}\times B_0$.

\end{proof}

Let $\tilde{V}_{\theta}:=V_{\theta}\times M\times B_0$ be the codomain of $\mathcal{R}$. We show two things:
\begin{itemize}
\item[(i)] the currential formula  \eqref{bordo.T} holds on $\mathring{\tilde{V}}_{\theta}$, the interior of $\tilde{V}_{\theta}$. 
\item[(ii)] there exists a map  from a manifold with corners $N$ to  $\partial\tilde{V}_{\theta}=V^{\epsilon}_{\gamma}\cap f^{-1}(\theta)$ that allows us to continue the process.
\end{itemize}

First we list some set-theoretic and differential properties of $\mathcal{R}$.

\begin{prop}\label{prop.1} The map $\mathcal{R}:\mathcal{A}_1\longrightarrow \tilde{V}_{\theta}$ satisfies:
	\begin{enumerate}
		\item $\mathcal{R}(\mathcal{A}_1)=\overline{\displaystyle\bigcup_{t\geq 0}\xi_t(B_0)}\bigcap  \tilde{V}_{\theta}=:A_{\theta}$ where the closure is taken within  $M\times M\times B_0$.
		\item $\mathcal{R}(\partial^{1}_{1}\mathcal{A}_1)=(\mathrm{U}_{{\bf{p}_1}}\cap f^{-1}(-\infty,\theta])\times s(s^{-1}(\mathrm{S}_{{\bf{p}_1}}\times B_0))$;
		\item $\mathcal{R}(\partial_0^1\mathcal{A}_1)=\xi_0(B_0)$;
		\item $\mathcal{R}(\partial_2^1\mathcal{A}_1)=A_{\theta}\cap \tilde{f}^{-1}(\theta)$;
		\item the restriction of $\mathcal{R}$ is a  bijection from $\mathcal{A}_1\setminus \overline{W}_{{\bf{p}_1}} $ (see (\ref{Wp1})) onto its image.	
		\item the restriction of $\mathcal{R}$ to $\partial^1_2\mathcal{A}_1$ is a bijection onto the image.	
	\end{enumerate}
\end{prop}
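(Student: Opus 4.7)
The plan is to exploit the explicit parametrization of $\mathcal{A}_1=\overline{W_1}\cap Z_1$ furnished by Theorem~\ref{teo.subvde}. Every point $u\in\mathcal{A}_1$ has the form $u=(t,a',a,b)$ with $a=s_0(b)$ (from the $Z_1$ factor) and, for $t\in(0,1)$, $a'=\psi_{t/(1-t)}(s_0(b))$; at $t=0$ we have $a'=s_0(b)$, while at $t=1$ one has $a'\in U_{\mathbf{p}_1}\cap\mathring{C}_\vre$ and $s_0(b)\in S_{\mathbf{p}_1}\cap\mathring{C}_\vre$. With this bookkeeping, $\mathcal{R}(u)=(a',s_0(b),b)$ coincides with $\xi_{t/(1-t)}(b)$ in the interior and degenerates appropriately at the endpoints.

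Items (1)--(4) are then direct set-theoretic identifications. For (1), the inclusion $\mathcal{R}(\mathcal{A}_1)\subseteq A_\theta$ is visible from the parametrization; for the reverse inclusion I would compactify time via $s_n:=t_n/(1+t_n)\in[0,1)$ and extract a convergent subsequence with $s_n\to s\in[0,1]$, using $a',s_0(b)\in V_\theta\subset\mathring{C}_\vre$ to place the limit quintuple inside the ambient open set in which $\overline{W_1}$ is taken. Item (2) follows from $U_{\mathbf{p}_1}\cap\mathring{C}_\vre\subset\mathring{V}_\gamma^\vre$, a consequence of Theorem~\ref{teo.vizinhanca}(4), together with the identification $(S_{\mathbf{p}_1}\times B_0)\cap s(B_0)=s(s^{-1}(S_{\mathbf{p}_1}\times B_0))$. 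Items (3) and (4) are immediate from the parametrization once one notes that the level-$\theta$ constraint in $\partial Z_1$ concerns the first spatial coordinate $a'$.

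The substantive part is the injectivity in (5) and (6). For (5), I would assume $\mathcal{R}(u_1)=\mathcal{R}(u_2)$ with $u_i=(t_i,a',a,b)\in\mathcal{A}_1\setminus\overline{W}_{\mathbf{p}_1}$, so only the time coordinates can differ, and then case-split on whether each $t_i$ lies in $\{0\}$, $(0,1)$, or $\{1\}$. When both $t_i\in(0,1)$, the identity $a'=\psi_{t_i/(1-t_i)}(s_0(b))$ forces $t_1=t_2$ unless $s_0(b)$ is a fixed point, hence $s_0(b)=\mathbf{p}_1$ and $u_i\in\overline{W}_{\mathbf{p}_1}$. In every mixed case, equality of the $a'$-coordinates combined with the invariance of $S_{\mathbf{p}_1}$ and $U_{\mathbf{p}_1}$ under the flow and the fact that non-fixed trajectories only approach $\mathbf{p}_1$ asymptotically forces $a'=\mathbf{p}_1$ and then $s_0(b)=\mathbf{p}_1$, again landing both $u_i$ in $\overline{W}_{\mathbf{p}_1}$. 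For (6), the extra constraint $f(a')=\theta$ automatically rules out $a'=\mathbf{p}_1$ (since $f(\mathbf{p}_1)=0$) and $t_i=0$ by Proposition~\ref{prop.triplo}(i), so only the easy interior subcase survives.

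The main obstacle I anticipate is the case analysis in (5): one must carefully trace how every degenerate configuration of $(t_1,t_2)$ routes back to the fixed-point stratum $\overline{W}_{\mathbf{p}_1}$. This is the precise place where excising $\overline{W}_{\mathbf{p}_1}$ is essential, reflecting the geometric fact that the time reparametrization $t\mapsto t/(1+t)$ collapses the orbit $\{\psi_t(\mathbf{p}_1)=\mathbf{p}_1:t\geq 0\}$ to an entire interval sitting over the single spatial point $(\mathbf{p}_1,\mathbf{p}_1,b)$, and this is the unique source of non-injectivity of $\mathcal{R}$.
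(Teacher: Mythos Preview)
Your proposal is correct and follows essentially the same route as the paper: explicit parametrization of $\mathcal{A}_1$ via Theorem~\ref{teo.subvde}, time-compactification $s_n=t_n/(1+t_n)$ for the $\supseteq$ inclusion in (1), and the same case-split on time coordinates for the injectivity in (5)--(6). The one place where the paper is more explicit than you is the $\subseteq$ inclusion of (1) at the $t=1$ boundary: you call it ``visible from the parametrization,'' but showing that a point $(a',s_0(b),b)$ with $a'\in U_{\mathbf{p}_1}$ and $s_0(b)\in S_{\mathbf{p}_1}$ actually lies in $\overline{\bigcup_{t\ge 0}\xi_t(B_0)}$ requires an argument---the paper invokes the transversality $s\pitchfork S_{\mathbf{p}_1}\times B_0$ together with Corollary~\ref{cortv} to produce an approximating sequence $\xi_{t_n}(\beta_n)$, though one could alternatively appeal to the manifold-with-corners structure of $\mathcal{A}_1$ to see that $\partial_1^1\mathcal{A}_1$ lies in the closure of $W_1\cap Z_1$.
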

\begin{proof} Let $q\in \overline{W_1}\cap Z_1$. On one hand
\[ q=\lim_{n\ra \infty}\left (t_n,\psi_{\frac{t_n}{1-t_n}}\left(x_n,y_n\right),x_n,y_n,b_n\right)
\]
where $b_n\in B$, $(x_n,y_n)\in \mathring{C}_{\ve}$, $t_n\in[0,1]$. From $q\in Z_1$ it follows that $(x_n,y_n,b_n)\ra (s_0(\beta),\beta)$. Suppose $t_n\ra t'<1$. Then 
\[\psi_{\frac{t_n}{1-t_n}}(x_n,y_n)\ra \psi_{\frac{t'}{1-t}}(s_0(\beta))
\]
Hence, in this case $\mathcal{R}(q)=\left (\psi_{\frac{t'}{1-t'}}(s_0(\beta)),s_0(\beta),\beta\right)\in \xi_{\frac{t'}{1-t'}}(B_0)$. 

When $t_n\ra 1$, $q\in \overline{W_1}$ implies that $(x_n,y_n,b_n)\ra (s_0(\beta),\beta)\in S_{{\bf{p}_1}}\times B_0$, i.e. $\beta\in s^{-1}(S_{{\bf{p}_1}}\times B_0)$ and $\psi_{\frac{t_n}{1-t_n}}\left(x_n,y_n\right)\ra q_1\in U_{{\bf{p}_1}}\cap V_{\theta}$.

We argue that due to the transversality of $s$ with $S_{{\bf{p}_1}}\times B_0$ all points in $U_{{\bf{p}_1}}\cap (V_{\theta}\times s(s^{-1}(S_{{\bf{p}_1}}\times B_0)))$ are limits of type $\xi_{t_n}(s_0(\beta_n),s_0(\beta_n),\beta_n)$, $t_n\geq 0$.  Fix first $q_2=(x_2,0,b_2)\in s(s^{-1}(S_{{\bf{p}_1}}\times B_0))$. Since $s\pitchfork B_0$ we can provide  a submanifold $B_0'\subset B_0$  of dimension equal to $\dim{U}_{{\bf{p}_1}}$ such that $s_0\bigr|_{B_0'}\pitchfork S_{{\bf{p}_1}}$ and $q_2\in s(B_0')$.  Take then a transverse small disk $D$ in $q_2+T_{q_2}s(B_0')$ centered at $q_2$. The trajectories originating in this disk will cut $s(B_0')$ exactly once due to the transversality of $s(B_0')$ to the flow. This stays true even if $q_2=(0,0,b_2)$ is  critical. By Corollary  \ref{cortv} which can be applied also to "slanted" disks, given any point $(0,y_2)\in U_{{\bf{p}_1}}$ there exists a sequence of points $u_n\in D$ with $u_n\ra q_2$ and a corresponding sequence of points on trajectories determined by $u_n$ that converges to $(0,y_2)$.

 This finishes the inclusion $\mathcal{R}(\mathcal{A}_1)\subset \overline{\bigcup_{t\geq 0}\xi_t(B_0)}$. 

 Conversely, let $ (a,b,c)\in \overline{\bigcup_{t\geq 0}(\xi_t(B_0))}\bigcap  \tilde{V}_{\theta}$. Then there exist $t_n\geq 0$ and $b_n\in B_0$ such that $(\psi_{t_n}(s_1(b_n)),s_1(b_n), b_n)\ra (a,b,c)$ with $c\in B_0$. By passing to a subsequence one can assume that $t_n\ra t_0$ or $t_n\ra \infty$.  Since $a\in V_{\theta}\subset \mathring{V}^{\vre}_{\gamma}$ and the latter is open in $M$ we can consider    $\psi_{t_n}(s_1(b_n))\in \mathring{V}^{\vre}_{\gamma}$       for $n$ big enough. However $\psi_{t_n}(s_1(b_n))$ might not be in $V_{\theta}$ for infinitely many $n$, since it could happen that $f(\psi_{t_n}(s_1(b_n)))>\theta$ for a subsequence.
  
 Let $r_n:=\frac{t_n}{1+t_n}$. We have that
\[ u_n:=\left(r_n,\psi_{t_n}(s_1(b_n)),s_1(b_n), b_n\right)\in W_1
\]
Since $r_n$, $b_n$ and $\psi_{t_n}(s_1(b_n))$ all converge we  have that in fact $u_n$ converges to a point $u\in [0,1]\times{V}_{\theta}\times\mathring{{C_{\vre}}}$ that necessarily lies  in $\overline{W_1}$. The limit $u=(r,a,b,c)$ will also be a point in $Z_1$ since $(b,c)\in s(B_0)$ and $a\in V_{\theta}$. Hence $(a,b,c)\in \mathcal{R}(\mathcal{A}_1)$.

\vspace{0.3cm}

Statements (2) and (3) are trivial from the description of $\partial^{1}_{1}\mathcal{A}_1$ and $\partial_0^1\mathcal{A}_1$.

For (4) the inclusion $\subset$ is straightforward.  For the inclusion $\supset$ notice that if $(a,b,c)\in \mathcal{A}_1$ then either ${f}(a)<\theta$ in which case $a\in \mathring{{V_{\theta}}}$ and so $(a,b,c)\in \mathring{Z_1}\cap \overline{W_1}$ or ${f}(a)=\theta$ in which case $(a,b,c)\in \partial  {Z_1}\cap \overline{W_1}=:\partial^1_2\mathcal{A}$.

\vspace{0.3cm}

For (5) one notices that   for $t\neq 1$ the (restriction of the) map $\mathcal{R}$ is injective away from the points corresponding to $s^{-1}(\{{\bf{p}_1}\}\times B_0)$.  Moreover $\mathcal{R}$ is injective when $t=1$. For $p\neq q$ with $t_p\neq 1$ and $t_q=1$, $\mathcal{R}(p)\neq \mathcal{R}(q)$ unless $p,q\in \overline{W}_{{\bf{p}_1}}$.
\vspace{0.5cm}

For (6) one notices that $\overline{W}_{{\bf{p}_1}}\cap \partial^1_2\mathcal{A}_1=\emptyset$.
	
	\end{proof}
	
	\begin{corollary}\label{coreqfund} The equality of currents (\ref{bordo.T}) holds on the open set $\mathring{V}_{\theta}\times M\times B_0$.
	\end{corollary}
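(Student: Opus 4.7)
The plan is to apply Lemma \ref{lema.ideia} to the proper resolution map $\mathcal{R}:\mathcal{A}_1\to \tilde{V}_{\theta}$ constructed above. By Proposition \ref{prop.propria} this map is proper and $\mathcal{A}_1$ is a manifold with corners of dimension $n+1$, so Lemma \ref{lema.ideia} yields the currential identity $d\mathcal{R}_*(\mathcal{A}_1)=\mathcal{R}_*(\partial \mathcal{A}_1)$ on the ambient $\tilde{V}_\theta$. The corollary will then follow by restricting this identity to the open set $\mathring{V}_\theta\times M\times B_0$ and matching each side with the corresponding term in (\ref{bordo.T}).

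For the left-hand side, I would identify $\mathcal{R}_*(\mathcal{A}_1)$ with the current of integration over $A_\theta=\overline{\bigcup_{t\geq 0}\xi_t(B_0)}\cap \tilde{V}_\theta$. This uses Proposition \ref{prop.1}(1) together with part (5): the restriction of $\mathcal{R}$ is a bijection off the exceptional set $\overline{W}_{{\bf{p}_1}}\cap \mathcal{A}_1$, which is cut out by the two equations $x=0$ and $y=0$ together with $s_0(b)={\bf{p}_1}$, hence by transversality of $s$ with $S_{{\bf{p}_1}}$ is of dimension strictly less than $n+1=\dim \mathcal{A}_1$. Consequently the push-forward current equals $[A_\theta]$ with multiplicity one, and once restricted to $\mathring{V}_\theta\times M\times B_0$ it agrees tautologically with $T\bigr|_{\mathring{V}_\theta\times M\times B_0}$, since the latter is by definition the current of integration over $\overline{\xi([0,\infty)\times B_0)}$ intersected with this open set.

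For the right-hand side, I would use the boundary decomposition (\ref{bordo.A}) together with the image computations in Proposition \ref{prop.1}(2)--(4). The component $\partial^1_2\mathcal{A}_1$ maps into $\tilde{f}^{-1}(\theta)\times M\times B_0$, which is disjoint from $\mathring{V}_\theta\times M\times B_0$, so it contributes nothing after restriction. The component $\partial^1_0\mathcal{A}_1$ pushes forward diffeomorphically onto $\xi_0(B_0)$, and with the orientation induced on $\partial \mathcal{A}_1$ (opposite to the outward $t$-direction at $t=0$) it contributes $-(\xi_0)_*(B_0)$. Finally $\partial^1_1\mathcal{A}_1$ pushes forward onto $(U_{{\bf{p}_1}}\cap f^{-1}(-\infty,\theta])\times s(s^{-1}(S_{{\bf{p}_1}}\times B_0))$; by Proposition \ref{prop.triplo} the intersection of this set with $\mathring{V}_\theta\times M\times B_0$ is precisely $U({\bf{p}_1})\times_{{\bf{p}_1}} s(s^{-1}(S({\bf{p}_1})))$, and this is the only summand of $\sum_F U(F)\times_F s(s^{-1}(S(F)))$ that can meet the open set, because for any other critical manifold $F$ the forward flow-out of $U(F)\cap \mathring{V}_\theta$ would escape through $\partial^-C_\varepsilon$ or never reach $V_\theta$ to begin with.

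The main obstacle will be the orientation bookkeeping, that is, checking that the boundary orientations on $\partial^1_0\mathcal{A}_1$ and $\partial^1_1\mathcal{A}_1$ induced by the natural orientation of $\mathcal{A}_1$ (as a transverse intersection of oriented manifolds with corners $Z_1$ and $\overline{W_1}$) yield exactly the signs $-$ and $+$ that appear in (\ref{bordo.T}); this reduces to a linear-algebra verification using the product structure of $Z_1=\bR\times V_\theta\times s(B_0)$ and the graph description of $\overline{W_1}$, together with the fact that the induced orientation on $U({\bf{p}_1})\times_{{\bf{p}_1}} s(s^{-1}(S({\bf{p}_1})))$ via $\partial^1_1\mathcal{A}_1$ is the standard product/fiber-product orientation. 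Everything else is essentially a rewording of Proposition \ref{prop.1}.
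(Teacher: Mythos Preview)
Your proposal is correct and follows essentially the same approach as the paper. The only cosmetic difference is the order of operations: the paper first restricts $\mathcal{A}_1$ to the preimage of $\mathring{V}_\theta\times M\times B_0$, observes that this restricted space $\mathring{\mathcal{A}}_1$ is already a manifold \emph{with boundary} (the corner stratum $\partial^2\mathcal{A}_1$ and the face $\partial^1_2\mathcal{A}_1$ both disappear over the interior), and then pushes forward; you push forward the full $\mathcal{A}_1$ and restrict the resulting currents afterward, noting that $\mathcal{R}(\partial^1_2\mathcal{A}_1)\subset\tilde f^{-1}(\theta)$ contributes nothing over the open set. Both are valid; the paper's ordering avoids having to mention the codimension-$2$ stratum at all, while yours makes the role of $\partial^1_2\mathcal{A}_1$ more explicit, which is useful preparation for the inductive step. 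Your orientation discussion goes beyond what the paper records in this proof.
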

	\begin{proof} The intersection $\mathring{\mathcal{A}}_{1}:=\mathcal{A}_1\cap (\mathring{V}_{\theta}\times M\times B_0)$ is a manifold with boundary since $\partial^1_2\mathcal{A}_1$ gets eliminated in the intersection. Since $\mathcal{R}$ is proper we can push-forward any current.  Use:
	\[ d (\mathcal{R}_*(\mathring{\mathcal{A}}_{1}))=\mathcal{R}_*(d\mathring{\mathcal{A}}_{1})=\mathcal{R}_*(\partial^1_1\mathcal{A}_1)-\mathcal{R}_*(\partial^1_0\mathcal{A}_1).
	\]
	Now $\mathcal{R}\bigr|_{\mathring{\mathcal{A}}_{1}}$ away from a set of zero measure\footnote{with respect to any measure induced by the volume form of a Riemannian metric on $\mathcal{A}_1$} is a  bijection onto 
	\[\xi([0,\infty)\times B_0)\cap \mathring{V}_{\theta}\times M\times B_0.\] It follows from the area formula that
	\[\mathcal{R}_*(\mathring{\mathcal{A}}_{1})=T\bigr|_{\mathring{V}_{\theta}\times M\times B_0}
	\]
	where $T$ is the current appearing in (\ref{bordo.T}).
		\end{proof}
		
		This is the first step. In order to proceed further we will use the restriction $\mathcal{R}:\partial^1_2\mathcal{A}_1\ra f^{-1}(\theta)\times M\times B_0=\tilde{f}^{-1}(\theta)$. We fix now another critical point ${\bf p}_2$ with $f({\bf p}_2)>f({\bf{p}_1})$. In order to implement the program we need the following.
		
		\begin{lemma}\label{otranslem} The restriction of $\mathcal{R}$  denoted $\sigma: \partial_{2}^{1}\mathcal{A}_1\longrightarrow \tilde{f}^{-1}(\theta)$ is completely transverse to $\mathrm{S}_{{\bf p}_2}\times M\times B_0$ within $\tilde{f}^{-1}(\theta)$ for all critical points ${\bf p}_2$.
\end{lemma}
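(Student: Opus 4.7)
The manifold with corners $\partial_2^1\mathcal{A}_1$ has two strata: the interior where $t<1$ and the boundary $\partial^2\mathcal{A}_1 \subset \{t=1\}$. By Lemma \ref{lema.transv}, "completely transverse" means I have to check transversality of $\sigma$ with $\mathrm{S}_{{\bf p}_2}\times M\times B_0$ on each stratum separately. The plan is to handle each stratum with the correct transversality hypothesis: the interior with the section transversality $s\pitchfork \mathrm{S}(F)$, the boundary with the Morse--Smale condition $\mathrm{U}_{{\bf p}_1}\pitchfork \mathrm{S}_{{\bf p}_2}$.

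\textbf{Interior stratum.} A point of $\mathring{\partial_2^1\mathcal{A}_1}$ is $(t,\psi_{T}(s_0(b)),s_0(b),b)$ with $T=t/(1-t)$ and the implicit constraint $f(\psi_T(s_0(b)))=\theta$. Since $\theta$ is regular, this constraint determines $T$ (and hence $t$) as a smooth function $T(b)$, so $\mathring{\partial_2^1\mathcal{A}_1}$ is an $n$-manifold parametrized by $b$. The tangent of $\sigma$ in direction $v\in T_bB_0$ is
\[
\bigl(d\psi_{T}(ds_0(v))+dT(v)\,X(\psi_T(s_0(b))),\;ds_0(v),\;v\bigr),
\]
where $dT(v)$ is chosen so the first component lies in $Tf^{-1}(\theta)$. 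At a point hitting $\mathrm{S}_{{\bf p}_2}\times M\times B_0$ we have $s_0(b)\in \mathrm{S}_{{\bf p}_2}$ (flow invariance). The required transversality reduces to the first factor, namely the surjectivity modulo $T(\mathrm{S}_{{\bf p}_2}\cap f^{-1}(\theta))$ of $v\mapsto d\psi_T(ds_0(v))+dT(v)X$ onto $Tf^{-1}(\theta)$. Here I would use two key facts: (a) $X(\psi_T(s_0(b)))\in T\mathrm{S}_{{\bf p}_2}$ since flow lines starting in $\mathrm{S}_{{\bf p}_2}$ stay there, so $X$ dies in the quotient $T_qM/T_q\mathrm{S}_{{\bf p}_2}$; and (b) $\mathrm{S}_{{\bf p}_2}$ is $\psi$-invariant, so $d\psi_T$ maps $T\mathrm{S}_{{\bf p}_2}$ isomorphically to $T\mathrm{S}_{{\bf p}_2}$. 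Hence the composition with projection modulo $T\mathrm{S}_{{\bf p}_2}$ equals the projection of $ds_0$, which is surjective precisely by the hypothesis $s\pitchfork \mathrm{S}_{{\bf p}_2}\times B_0$. Finally, since $\mathrm{S}_{{\bf p}_2}\pitchfork f^{-1}(\theta)$ (because $X\in T\mathrm{S}_{{\bf p}_2}$ is transverse to the regular level), the quotient $Tf^{-1}(\theta)/T(\mathrm{S}_{{\bf p}_2}\cap f^{-1}(\theta))$ identifies naturally with $TM/T\mathrm{S}_{{\bf p}_2}$, closing the argument.

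\textbf{Boundary stratum.} On $\partial^2\mathcal{A}_1=\{1\}\times(\mathrm{U}_{{\bf p}_1}\cap f^{-1}(\theta))\times s(s^{-1}(\mathrm{S}_{{\bf p}_1}\times B_0))$, the map $\sigma$ is just projection to the last three factors, with tangent image
\[
\bigl\{(w,ds_0(v),v)\;:\;w\in T_q(\mathrm{U}_{{\bf p}_1}\cap f^{-1}(\theta)),\;v\in T_bs^{-1}(\mathrm{S}_{{\bf p}_1}\times B_0)\bigr\}.
\]
Intersection with $\mathrm{S}_{{\bf p}_2}\times M\times B_0$ forces $q\in \mathrm{U}_{{\bf p}_1}\cap \mathrm{S}_{{\bf p}_2}\cap f^{-1}(\theta)$. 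The conditions on the second and third components are vacuous (those components of the target submanifold are the full $M$ and $B_0$), so the transversality reduces to proving $T_q(\mathrm{U}_{{\bf p}_1}\cap f^{-1}(\theta))+T_q(\mathrm{S}_{{\bf p}_2}\cap f^{-1}(\theta))=T_qf^{-1}(\theta)$. This follows from the Morse--Smale condition $T_q\mathrm{U}_{{\bf p}_1}+T_q\mathrm{S}_{{\bf p}_2}=T_qM$ combined with the fact that both $\mathrm{U}_{{\bf p}_1}$ and $\mathrm{S}_{{\bf p}_2}$ contain the flow line through $q$ and are therefore transverse to $f^{-1}(\theta)$ (since $\theta$ is regular).

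\textbf{Main obstacle.} The delicate step is the interior case, where one must simultaneously track the constraint $f\circ\psi_T\circ s_0=\theta$ (which eats one degree of freedom through $dT(v)$) and verify that this bookkeeping doesn't break the surjectivity coming from $s\pitchfork \mathrm{S}_{{\bf p}_2}$. The trick is to observe that $X$ is both the obstruction to $\mathrm{S}_{{\bf p}_2}$ being tangent to $f^{-1}(\theta)$ and the vector absorbed by $dT$, so the two effects cancel in the appropriate quotient. The boundary case by contrast is a direct consequence of Morse--Smale and should be essentially routine.
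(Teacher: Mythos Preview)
Your proof is correct and follows the same two-stratum decomposition as the paper, invoking the Morse--Smale condition on $\partial^2\mathcal{A}_1$ and the section transversality $s\pitchfork \mathrm{S}_{{\bf p}_2}$ on the interior. The only cosmetic difference is that for the interior stratum you compute $d\sigma$ explicitly and pass to the quotient $T_qM/T_q\mathrm{S}_{{\bf p}_2}$, whereas the paper packages the same computation as ``the flow preserves transversality'' and appeals to the Flowout Theorem (Proposition~\ref{FlB}); your hands-on version is arguably clearer about why the $dT(v)X$ correction term is harmless.
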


First a clarification. The complete transversality for the map $\sigma$ is meant here both within the ambient space $\tilde{f}^{-1}(\theta)$ (with $\mathrm{S}_{p_2}\times M\times B_0\cap \tilde{f}^{-1}(\theta)$) and within the ambient space $M\times M\times B_0$. The two statements are clearly equivalent due to the transversality of $S_{p_2}$ to ${f}^{-1}(\theta)$ for every regular $\theta$.
\begin{proof} Since $\partial_{2}^{1}\mathcal{A}_1$ is a manifold with boundary $\partial^2\mathcal{A}_1$, as defined in  \eqref{bordo.A3}, we need to show transversality at points
	\begin{itemize}
	\item[(1)]  $q\in \sigma(\partial^2\mathcal{A}_1)\cap (\mathrm{S}_{{\bf p}_2}\times M\times B_0)$
	
	\item[(2)] $q\in \sigma(\partial_{2}^{1}\mathcal{A}_1\setminus \partial^2\mathcal{A}_1) \cap (\mathrm{S}_{{\bf p}_2}\times M\times B_0).$
	\end{itemize}
	For both situations, take the unique (by (6) of Prop.\ref{prop.1} ) $q'\in \partial_{2}^{1}\mathcal{A}_1$ such that $\sigma(q')=q$. The two situations are distinguished by $t_{q'}=1$ (for (1)) or $t_{q'}\neq 1$.
	
	From the explicit expression of $\partial^2\mathcal{A}_1$ in  (\ref{bordo.A3}) we see that transversality for (1) is implied by the Smale property of the flow. 
	
	For (2) since $t\neq 1$ we can give another description to $\overline{W_1}\setminus\{t=1\}\cap (f^{-1}(\theta)\times M\times B_0)$ as the graph of the time map defined over $\xi_0(B_0\setminus s^{-1}(S_{{\bf{p}_1}}\times B_0))$ that associates to a point $p$ the time $t_p$ it needs to reach $\tilde{f}^{-1}(\theta)$. We deduce that the intersection of this time map graph with the flow-invariant $S_{p_2}\times M\times B_0$ can be described as the flow-out of the intersection $\xi_0(B_0\setminus s^{-1}(S_{{\bf{p}_1}}\times B_0))\cap (S_{{\bf p}_2}\times M\times B_0)$ to the level set $\tilde{f}^{-1}(\theta)$. By the transversality of $s$ with $S_{p_2}$ we get that this intersection is transverse. Moreover, since the flow preserves transversality we get by Proposition \ref{FlB} the transversality condition we are after within $\tilde{f}^{-1}(\theta)$.
\end{proof}
	
	We resume what we did so far. We started with the proper submanifold  $\xi_0(B_0)$ of $V_{\theta}\times M\times B_0$ and we constructed a resolution of its flow-out in the open set $\mathring{V}_{\theta}\times M\times B_0$. The resolution took the form of a map $\mathcal{R}$ from a manifolds with corners $\mathcal{A}_1$ to $V_{\theta}\times M\times B_0$. Of course this only solves the problem of the flow-out of $\xi_0(B_0)$ for the first critical point encountered.  How do we go from here. 
	
	The map $\sigma:\partial^1_2\mathcal{A}_1\ra \tilde{f}^{-1}(\theta)$ will play now the role of $\xi_0:B_0\ra M\times M\times B_0$ and we would like to apply the same ideas to $\sigma$.

	So we would like to see what properties of $\sigma$ can be preserved when going through the next critical level. Clearly by composing with the flow-diffeomorphisms we can assume that the image of $\sigma$ is contained in a regular level of $\tilde{f}$ close to the next critical level.

	There is no harm in assuming that the next critical level lies within $\tilde{f}^{-1}(0)$ by changing $f$ to $f+c$ for some constant $c$.  Moreover we will be using certain neighborhoods of the critical sets. First, since the Shilnikov-Minervini results are local around the critical points we will use different neighborhoods around the critical sets of $\tilde{f}^{-1}(0)$. 
	
	\begin{itemize}
	\item We will denote $\tilde{C}_{\vre}:= \bigcup_{p\in \Crit(f)\cap f^{-1}(0)} C_{\vre}(p)\times M\times B_0$, where $\vre$ is chosen so that the results of Section \ref{s.BVP} hold for each cube $C_{\vre}(p)$ with $p$ in the finite set $\Crit(f)\cap f^{-1}(0)$.
	\item For each ${\bf p}\in \Crit(f)\cap f^{-1}(0)$ we will chose a $\theta_p$ and $\gamma_p$ small enough so that   Proposition \ref{prop.triplo} item [(ii)] is satisfied (observe that the points ${\bf p}\in \Crit(f)\cap f^{-1}(0)$ are not points of local minimum in our context). Then let $\theta_1:=\min {\theta_{\bf p}}$, $\gamma:=\min {\gamma_{\bf p}}$ $V_{\theta_{\bf p}}:=V^{\vre}_{\gamma}({\bf p})\cap f^{-1}[-\theta_1,\theta_1]$,  $V_{\theta_1}=\cup_{{\bf p}} V_{\theta_{\bf p}}$ and
	\[ \tilde{V}_{\theta_1}:=V_{\theta_1}\times M\times B_0.
	\]
	\end{itemize}


	

Here is the model result we are after.
\begin{prop}\label{model.prop} Let $N$ be an oriented manifold with corners of dimension $n$ e type $k\geq1 $\footnote{The type of a manifold with corners gives the codimension of the smallest strata in the boundary. Some authors prefer to call this "depth".} and let $-\theta$ be a regular level for $\tilde{f}$ such that $0$ is a critical level and $\theta$ is small enough. Let $\theta'<\theta_1$ and $\sigma:N\rightarrow \tilde{f}^{-1}(-\theta' )\cap \tilde{V}_{\theta_1}$, $\sigma=(\sigma_1,\sigma_2,\sigma_3)$ be a smooth map such that
\begin{itemize}
\item[(a)] $\sigma$ is proper
\item[(b)]  $\sigma$ is completely transverse to all stable manifolds $\mathrm{S}_{\bf p}\times M\times B_0$ for ${\bf p}\in \Crit(f)$; equivalently $\sigma_1\pitchfork \mathrm{S}_{\bf p}$, for all ${\bf p}\in\Crit(f)$.
\item[(c)]  $\sigma\bigr|_{N\setminus \partial^2N}$ is injective where $\partial^2N$ is the collection of strata of codimension at least $2$.
\item[(d)] $\sigma_1\bigr|_{N^0}= \alpha((\sigma_2,\sigma_3)\bigr|_{N^0})$ for some function $\alpha$ where $N^0$ is the top stratum of $N$.
\end{itemize}
Then there exists a smooth map, called flow resolution $\mathcal{R}_{\sigma}: \mathcal{A}_\sigma\longrightarrow  \tilde{V}_{\theta_1}$ defined over an oriented manifold with corners $\mathcal{A}_{\sigma}$ of dimension $n+1$  and type $k+2$ such that
\begin{itemize}
\item[(i)] $\mathcal{R}_{\sigma}$ is proper;
\item[(ii)] $\mathcal{R}_{\sigma}(\mathcal{A}_\sigma)=\overline{\left(\bigcup_{t\geq 0}\xi_t(\sigma(N))\right)}\cap \tilde{V}_{\theta_1}$ where the closure is taken in $M\times M\times B_0$.
\item[(iii)] $\mathcal{R}_{\sigma}$ is a bijection from an open subset of $\mathcal{A}_{\sigma}$ of full measure to  an open subset of full measure of $\left(\bigcup_{t\geq 0}\xi_t(\sigma(N))\right)\cap \tilde{V}_{\theta_1}.$
\item[(iv)] $\mathcal{A}_{\sigma}$ has a  distinguished boundary $N':=\partial^1_{\theta_1}\mathcal{A}_{\sigma}$ of dimension $n$ and type $k+1$ such that $\sigma':=\mathcal{R}_{\sigma}|_{N'}: N'  \longrightarrow \tilde{f}^{-1}(\theta_1)\cap \tilde{V}_{\theta_1}$ is completely transverse to all the stable manifolds $S_{\bf p}\times M\times B_0$, is injective on  $N'\setminus\partial^2N'$ and the components of $\sigma'$ satisfy property of item $(d)$ when restricted to $(N')^0$.
\end{itemize}
\end{prop}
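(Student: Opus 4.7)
The plan is to mirror the construction of $\mathcal{A}_1$ from the preceding section, replacing the starting section $\xi_0: B_0 \to M \times M \times B_0$ by the more general map $\sigma$. Since the components of $\tilde{V}_{\theta_1}$ around distinct critical points of $f^{-1}(0)$ are disjoint, I would work one critical point ${\bf p}$ at a time. In the straightened cube $\mathring{C}_{\vre}({\bf p})$, consider the local flow graph
\[
W_{\bf p} := \left\{\left(t,\psi_{t/(1-t)}(x,y),x,y,b\right) : (x,y)\in\mathring{C}_{\vre}({\bf p}),\ b\in B_0,\ 0\leq t<1\right\}
\]
inside $[0,1]\times\mathring{C}_{\vre}\times\mathring{C}_{\vre}\times B_0$. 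By Theorem~\ref{teo.subvde}, in the form of Remark~\ref{rem001} extended trivially by the $B_0$ factor, $\overline{W_{\bf p}}$ is a smooth manifold with boundary whose $t=1$ stratum equals $\{1\}\times(U_{\bf p}\cap\mathring{C}_{\vre})\times(S_{\bf p}\cap\mathring{C}_{\vre})\times B_0$. I would then define $\mathcal{A}_\sigma$ as the fiber product of the map $(t,n)\mapsto(t,\sigma_1(n),\sigma_3(n))$ on $[0,1]\times N$ with the projection $\overline{W_{\bf p}}\to[0,1]\times\mathring{C}_{\vre}\times B_0$ that forgets the flowed coordinate $y_0$, further cut by the requirement $y_0\in V_{\theta_1}$. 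The resolution map would be $\mathcal{R}_\sigma(t,n,w):=(\pi_{y_0}(w),\sigma_2(n),\sigma_3(n))$.

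To see that $\mathcal{A}_\sigma$ is a smooth manifold with corners of dimension $n+1$ and type $k+2$, I would verify complete transversality. On the top stratum it is automatic because $W_{\bf p}$ is a graph over its base. On $\partial_1\overline{W_{\bf p}}$, where $t=1$, the defining equation is $\sigma_1(n)\in S_{\bf p}$ and transversality is precisely hypothesis~(b). Transversality of the level cut $\{f(y_0)=\theta_1\}$ follows from $\theta_1$ being a regular value together with the flow-convex properties from Proposition~\ref{prop.triplo}. Iterated use of Lemma~\ref{lema.transv} then gives the corner structure, with codimension-$1$ strata $\partial^1_0,\partial^1_1,\partial^1_{\theta_1},\partial^1_N$ corresponding to $t=0$, $t=1$, $f(y_0)=\theta_1$, and $\partial N$ respectively. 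Among the first three only $\partial^1_1$ and $\partial^1_{\theta_1}$ can meet, while $\partial^1_N$ can intersect any of them and contributes up to codim $k$, so the maximum codimension of a corner is $k+2$.

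Properties (i)--(iii) would then be obtained by transcribing Propositions~\ref{prop.propria} and~\ref{prop.1}. For properness, a divergent sequence $u_\ell=(t_\ell,n_\ell,w_\ell)\in\mathcal{A}_\sigma$ with $\mathcal{R}_\sigma(u_\ell)$ convergent forces $\sigma_2(n_\ell),\sigma_3(n_\ell)$ to converge; since the starting position $\sigma_1(n_\ell)$ stays in the bounded set $\overline{V_{\theta_1}}$, the full tuple $\sigma(n_\ell)$ converges, and the properness hypothesis~(a) extracts a convergent subsequence $n_\ell\to n_\infty$. A case analysis on whether $t_\ell\to t'<1$ or $t_\ell\to 1$ reproduces the argument of Proposition~\ref{prop.propria}, with Corollary~\ref{cortv} supplying the approach points in $U_{\bf p}$ inside $\overline{W_{\bf p}}$. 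The image identification~(ii) is set-theoretic and transcribes Proposition~\ref{prop.1}(1) directly. Generic injectivity~(iii) holds because the failure-of-injectivity locus lies in the codim-$1$ stratum $\partial^1_1\mathcal{A}_\sigma$, hence has $\mathcal{H}^{n+1}$-measure zero.

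For~(iv), set $N':=\partial^1_{\theta_1}\mathcal{A}_\sigma$ and $\sigma':=\mathcal{R}_\sigma|_{N'}$; its dimension $n$ and type $k+1$ follow from the corner count. Complete transversality of $\sigma'$ with each $S_{\bf q}$ is the analogue of Lemma~\ref{otranslem}: on the interior of $\partial^1_{\theta_1}$ the map $\sigma'$ is the flowout of $\sigma$ to level $\theta_1$, so Proposition~\ref{FlB} combined with $\sigma_1\pitchfork S_{\bf q}$ preserves transversality; on the corner $\partial^1_1\cap\partial^1_{\theta_1}$ it follows from the Smale condition $U_{\bf p}\pitchfork S_{\bf q}$. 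Injectivity of $\sigma'$ off $\partial^2 N'$ follows from injectivity of $\sigma$ off $\partial^2 N$ combined with injectivity of the flow at finite time; condition~(d) is essential here, as it ensures that on the corner stratum the pair $(n,y_0)$ is determined by $(\sigma_2(n),\sigma_3(n),y_0)$. Condition~(d) itself propagates because the time-to-level-$\theta_1$ function $t(n)$ is determined by $\sigma_1(n)$, so $\sigma'_1(n)=\psi_{t(n)}(\alpha(\sigma_2(n),\sigma_3(n)))$ is again a function of $(\sigma'_2,\sigma'_3)=(\sigma_2,\sigma_3)$. The main obstacle throughout is the bookkeeping at the highest-codimension corners where $\partial^2 N$ meets $\partial^1_1\cap\partial^1_{\theta_1}$: there the three transversality inputs (Smale, $\sigma_1\pitchfork S_{\bf p}$, and regularity of $\theta_1$) must combine coherently, and condition~(d) is what keeps the fiber product from collapsing at these loci.
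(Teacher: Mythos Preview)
Your approach is essentially that of the paper, and the outline is correct. The one structural difference is that the paper avoids the fiber-product language by fixing a proper embedding $N\hookrightarrow\bR^j$ and then working with the \emph{graph} $\Gamma_\sigma\subset V_{\theta_1}\times M\times B_0\times\bR^j$: it enlarges the ambient space to $\bR\times\mathring{C}_{\vre}\times\mathring{C}_{\vre}\times M\times B_0\times\bR^j$, forms the (trivially extended) flow graph $\overline{W_2}$ there, and sets $\mathcal{A}_\sigma:=\overline{W_2}\cap Z_\sigma$ with $Z_\sigma:=\bR\times V_{\theta_1}\times\Gamma_\sigma$. This buys a direct application of Lemma~\ref{lema.transv} (intersection of submanifolds with corners) without having to verify that fiber products of manifolds with corners are again manifolds with corners; your route is equivalent but needs that translation made explicit. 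The paper's resolution map is then just projection onto the second, fourth and fifth components, and there is an auxiliary $\tilde{\mathcal{R}}_\sigma$ keeping the $\bR^j$-coordinate, used to cleanly separate the injectivity argument from the projection to $\tilde V_{\theta_1}$.

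There is a small gap in your treatment of (iii): the failure of injectivity of $\mathcal{R}_\sigma$ is not confined to $\partial^1_1\mathcal{A}_\sigma$. Since $\sigma$ is only assumed injective on $N\setminus\partial^2N$, points of $\mathcal{A}_\sigma$ lying over $\partial^2N$ can also map non-injectively under $\mathcal{R}_\sigma$; the paper handles this by removing in addition the lower-dimensional subspace $\mathcal{A}_{\sigma|_{\partial^2N}}$ and arguing injectivity on $\mathcal{A}_\sigma\setminus(\{t=1\}\cup\mathcal{A}_{\sigma|_{\partial^2N}})$. Everything else in your sketch (properness via the case split on $t_\ell\to 1$ versus $t_\ell\to t'<1$, the Smale condition on the corner $\partial^1_1\cap\partial^1_{\theta_1}$, and the propagation of condition (d)) matches the paper's proof.
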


\begin{remark} The word distinguished is related to the fact that $\partial^1_{\theta_1}\mathcal{A}_{\sigma}$ is not the full boundary of $\mathcal{A}_{\sigma}$ but one that has a collar neighborhood.
\end{remark}

\begin{remark} It is important to specify the codomain of $\sigma$ in order to state the properness property. In the induction process we are using, the original map $\mathcal{R}\bigr|_{\partial^1_2\mathcal{A}_{\theta}}$ is  proper when the codomain is  $V_{\theta}\times M\times B_0\cap \tilde{f}^{-1}(\theta)$. The later is an open set inside $\tilde{f}^{-1}(\theta)$. Clearly the inclusion of an open set into the ambient space is not proper. 
\end{remark}
\begin{remark} The injectivity property stated in item (c) appears because  we want the current $(\mathcal{R}_{\sigma})_*(\mathcal{A}_{\sigma})$ to pe determined by the image of $\mathcal{R}_{\sigma}$. Otherwise one could have multiplicities or worse things happening. One cannot expect injectivity to hold everywhere. 
\end{remark}
\begin{remark} The seemingly strange property (d) is to insure the "replication" of the injectivity property away from the codimension $2$ stratum.  Property (d) is fulfilled for the initial $\sigma$, the restriction of $\mathcal{R}$ to $\partial^1_2\mathcal{A}$. In that case, $N^0$ is the graph of the time map $p\ra t_p$ for $p\in \xi_0(B_0\setminus \xi_0^{-1}(S_{{\bf p}_1}\times M\times B_0))$ as described in the proof of Lemma \ref{otranslem} while $\sigma$ projects $(t_p,p)$ to $p$. Since the first component of $\xi_0$ is dependent on the other two, we get the claim.
\end{remark}

In order to prove  Proposition \ref{model.prop} we  also need to deal with the fact that $\sigma(N)$ is not necessarily a subspace of $V_{\theta_1}\times M\times B_0$.  Hence rather than "flowing" $\sigma(N)$ we will consider the graph $\Gamma_{\sigma}$. It is convenient to consider first a proper embedding $N\hookrightarrow \bR^j$ and look at the closure of the set
\[ W_2:=\left\{ \left(t,\psi_{\frac{t}{1-t}}(m_1),m_1,m_2,b,n\right)~|~  0<t<1\right\}\cap \bR\times\mathring{C}_{\vre}\times \mathring{C}_{\vre}\times M\times B_0\times \bR^j
\]
inside $ \bR\times\mathring{C}_{\vre}\times \mathring{C}_{\vre}\times M\times B_0\times \bR^j$. 

By Remark \ref{rem001}, this closure is a manifold of dimension $2m+n+j+1$ with boundary
\begin{eqnarray*} \partial^1_1 \overline{W_2}:= \bigcup_{{\bf p}\in f^{-1}(0)\cap \Crit(f)}  \{1\}\times(U_{{\bf p}}\cap \mathring{C}_{\vre}({\bf p}))\times (S_{{\bf p}}\cap \mathring{C}_{\vre}({\bf p}))\times M\times B_0\times \bR^j \bigcup
  \\
         \partial^1_0 \overline{W_2}:=   \{0\}\times \triangle_{\mathring{C}_{\vre}}\times M\times B_0\times \bR^j.\qquad\qquad\qquad
\end{eqnarray*}
\vspace{0.3cm}

 Let
 \[Z_{\sigma}:= \bR\times V_{\theta_1}\times \Gamma_{\sigma}\subset \bR\times V_{\theta_1}\times V_{\theta_1}\times M\times B_0\times N.\]  
 
 Since $V_{\theta_1}$ is a manifold with boundary we get that $Z_{\sigma}$ is a manifold with corners of dimension $m+n+1$ and type $k+1$. The codimension $1$ boundary components of $Z_{\sigma}$ are
 \begin{eqnarray}\partial^1_0Z_{\sigma}:=\bR\times(V_{\theta_1}\cap f^{-1}(-\theta_1))\times  \Gamma_{\sigma} \qquad  \quad \qquad\\
  \partial^1_1Z_{\sigma}:=\bR\times(V_{\theta_1}\cap f^{-1}(\theta_1))\times  \Gamma_{\sigma} \qquad  \qquad\qquad \\
\partial^1_{j+1}Z_{\sigma}:=\bR\times V_{\theta_1}\times \Gamma_{\sigma\bigr|_{\partial^1_{j} N}},\;\; 1\leq j\leq f_{N}
 \end{eqnarray}
where $f_N$ is the number of codimension $1$ boundary components of $N$, i.e. the number of connected components of $\partial^1 N\setminus \partial^2 N$, assumed finite.

\begin{lemma} \label{Lem} The manifolds $\overline{W_2}$ and $Z_{\sigma}$ are completely transverse inside $\bR\times\mathring{C}_{\vre}\times \mathring{C}_{\vre}\times M\times B_0\times \bR^j$. This implies that $Z_{\sigma}\cap \overline{W_2}$ is a manifold with corners of dimension $n+1$ and type at most $k+2$. 

Moreover, the codimension $1$ boundary of $Z_{\sigma}\cap \overline{W_2}$ has the following components which are themselves manifolds with corners
\begin{eqnarray} \quad  \partial^1_1\overline{W_2}\cap Z_{\sigma}\quad&=&\bigcup_{{\bf p}\in \Crit(f)\cap f^{-1}(0)}\{1\}\times (U_{{\bf p}}\cap f^{-1}([0,\theta_1]))\times \Gamma_{\sigma_{|\sigma^{-1}(S_{\bf p}\times M\times B_0)}} \\
 \label{levelthetaprim} \partial^1_0\overline{W_2}\cap Z_{\sigma}\quad&= &\{0\}\times\Gamma_{\tilde{\sigma}}\\ 
 \label{boundtheta} \overline{W_2}\cap \partial^1_1Z_{\sigma}\quad&&\\
 \overline{W_2}\cap \partial^1_{j+1}Z_{\sigma}&\mbox{for}&1\leq j\leq f_N.
\end{eqnarray}
where $\tilde{\sigma}=(\sigma_1,\sigma_1,\sigma_2,\sigma_3)$ given that $\sigma=(\sigma_1,\sigma_2,\sigma_3)$.  Finally,
 \begin{equation} \label{emptyset}\overline{W_2}\cap \partial^1_0Z_{\sigma}\quad=\emptyset.\end{equation}
\end{lemma}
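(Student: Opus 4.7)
My plan is to exploit the graph structure of $W_2$ over the coordinates $(t,m_1,m_2,b,n)$: in this parametrization the $y$-coordinate is determined by $\psi_{t/(1-t)}(m_1)$, so the projection $TW_2 \to T(\text{ambient})$ surjects onto the subspace where the $y$-direction is suppressed. Since $TZ_\sigma$ contains the full $T_yV_{\theta_1}$ (equal to $T_yM$ at interior points of $V_{\theta_1}$), transversality of $W_2$ with $\mathring{Z}_\sigma$ is automatic, and the same graph argument handles $\partial^1_0\overline{W_2}\cap \mathring{Z}_\sigma$ since the diagonal still supplies a free $m_1$ direction. The only nontrivial transversalities involve the $t=1$ boundary of $\overline{W_2}$ and the level-set boundaries of $Z_\sigma$.

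For $\partial^1_1\overline{W_2} \cap \mathring{Z}_\sigma$, where $m_1\in S_{\bf p}$, I would invoke hypothesis (b) in the form $T_{m_1}S_{\bf p} + d\sigma_1(T_nN) = T_{m_1}M$ to supply the $m_1$ direction that the graph structure misses at $t=1$. For $W_2\cap \partial^1_1 Z_\sigma$ (the level set $f=\theta_1$), transversality follows from regularity of the level: the time tangent of $W_2$ maps to a nonzero multiple of $X(y)$ in the $y$-component, and $df_y(X(y))\neq 0$ off critical points. For $\overline{W_2}\cap\partial^1_{j+1}Z_\sigma$, complete transversality of $\sigma$ passes to $\sigma|_{\partial^1_jN}\pitchfork S_{\bf p}$ on each boundary face, so the previous arguments apply. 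Corner-corner intersections combine these ingredients. An application of Lemma \ref{lema.transv} then yields a manifold with corners of dimension $n+1$ and type at most $k+2$, matching the sum of the types of $\overline{W_2}$ (namely $1$) and $Z_\sigma$ (namely $k+1$).

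The emptiness claim \eqref{emptyset} is separate and immediate: any $(t,y,m_1,m_2,b,n)\in\overline{W_2}$ satisfies either $y=\psi_{t/(1-t)}(m_1)$ for $t<1$, whence $f(y)\geq f(m_1)$ since $\psi$ is non-decreasing in $f$, or $t=1$ with $y\in U_{\bf p}\cap \mathring{C}_\vre({\bf p})$ whence $f(y)\geq 0$. Meeting $Z_\sigma$ forces $m_1=\sigma_1(n)$ and hence $f(m_1)=-\theta'$; since $-\theta'>-\theta_1$ and $0>-\theta_1$, both cases contradict the defining condition $f(y)=-\theta_1$ of $\partial^1_0Z_\sigma$. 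The boundary identifications are then read off by direct set-theoretic intersection using the graph descriptions of $\overline{W_2}$ and $Z_\sigma$, together with the observation $U_{\bf p}\cap V_{\theta_1}=U_{\bf p}\cap f^{-1}([0,\theta_1])$ locally (since $U_{\bf p}\subset \{f\geq 0\}$ near ${\bf p}$).

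The main obstacle I anticipate is the systematic bookkeeping at corner strata: although the geometric content reduces to the two hypotheses $\sigma_1\pitchfork S_{\bf p}$ and regularity of $\theta_1$, confirming they suffice at every codimension-two and higher intersection requires writing out the tangent-space sums carefully case by case, which is tedious but not conceptually difficult.
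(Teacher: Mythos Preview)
Your proposal is correct and follows essentially the same route as the paper: the paper's proof simply says ``Analogous to the proof of Lemma \ref{s4l1}'' and then gives the $f$-monotonicity argument for \eqref{emptyset}, which is exactly what you have spelled out in detail (graph structure for the interior, $\sigma_1\pitchfork S_{\bf p}$ for the $t=1$ boundary, regularity of the level set for $\partial^1_1 Z_\sigma$, and the $f(y)\geq f(m_1)=-\theta'>-\theta_1$ inequality for emptiness). Your more explicit breakdown of the case analysis is a faithful unpacking of what the paper leaves to the reader.
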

\begin{proof} Analogous to the proof of Lemma \ref{s4l1}. 

The reason for (\ref{emptyset}) is that the $f$-value of the second component of $\overline{W_2}$ is at least as big as the $f$-value of the third component while this is not the case for an element of $\partial^1_0Z_{\sigma}$ due to $f \circ\sigma_1=-\theta'>-\theta_1$.  
\end{proof}
 \begin{remark} The reason for which we chose $\Imag \sigma\subset \tilde{f}^{-1}(-\theta')$ is because we did not want $\Imag \sigma \subset\partial \tilde{V}_{\theta_1}$. That would render  Lemma \ref{Lem} false. An alternative approach, if $\Imag \sigma\subset \tilde{f}^{-1}(-\theta_1)$, would be to  replace $V_{\theta_1}$ in the definition of $Z_{\sigma}$ with $V^{\theta'}_{\theta_1}=f^{-1}([-\theta',\theta_1])\cap V^{\epsilon}_{\gamma}$ where $\theta'<\theta_1$. Then one has to be content with the construction of the resolution for the flow-out of $\sigma(N)$ in between levels $-\theta'$ and $\theta_1$. 
 \end{remark}

Let $\mathcal{A}_{\sigma}:=\overline{W_2}\cap Z_{\sigma}$. This is an oriented manifolds with corners because $\overline{W_2}$ and $Z_{\sigma}$ are both oriented. The convention here is that the components which correspond to graphs of smooth functions over a certain oriented base manifold $B'$ inherit the orientation of the manifold $B'$. Hence the direction of the flow gives the first vector of a positively oriented basis. Other than this, we respect the order of factors in a product. 

 Consider now
\[ \mathcal{R}_{\sigma}:\mathcal{A}_{\sigma}\ra V_{\theta_1}\times M\times B_0
\]
to be the restriction of the projection onto the second, fourth and fifth components of the product $\bR\times\mathring{C}_{\vre}\times \mathring{C}_{\vre}\times M\times B_0\times \bR^j$.

\begin{prop}\label{proper2} The map $\mathcal{R}_{\sigma}$ is proper.
\end{prop}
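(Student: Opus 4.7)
The plan is to follow the template of the proof of Proposition \ref{prop.propria}, now invoking in an essential way hypothesis (a) of Proposition \ref{model.prop} that $\sigma$ itself is proper. I use the sequential criterion for properness on locally compact metric spaces. Assume for contradiction that there exists a sequence $u_n = (t_n, a'_n, a_n, m_n, b_n, x_n) \in \mathcal{A}_{\sigma}$ with no convergent subsequence in $\mathcal{A}_{\sigma}$, yet $\mathcal{R}_{\sigma}(u_n) = (a'_n, m_n, b_n) \to (a', m, b) \in V_{\theta_1}\times M\times B_0$. Pass to a subsequence so that $t_n \to t^* \in [0,1]$. The constraint $u_n \in Z_{\sigma}$ forces $(a_n, m_n, b_n) = (\sigma_1(x_n), \sigma_2(x_n), \sigma_3(x_n))$, so $\sigma_2(x_n) \to m$ and $\sigma_3(x_n) \to b$ come for free; the issue is to extract a convergent subsequence of $x_n$.

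The argument now splits on $t^*$. If $t^* < 1$, then for large $n$ the point $u_n$ lies in $W_2\setminus\{t=1\}$ and the flow identity $a'_n = \psi_{t_n/(1-t_n)}(a_n)$ holds; reversing the flow over a bounded time interval gives $a_n \to a_\infty := \psi_{-t^*/(1-t^*)}(a')$, and $a_\infty$ stays inside $V^{\vre}_{\gamma}$ by the flow-convexity property of Theorem \ref{teo.vizinhanca}(1),(4) applied to the trajectory from $a_\infty$ to $a'$. If $t^* = 1$, then $a_n$ lives in the precompact set $V_{\theta_1}\subset C_{\vre}$, so after a further subsequence $a_n \to a_\infty\in \overline{V_{\theta_1}}$; the boundary description of $\overline{W_2}$ at $t=1$ forces $a_\infty \in \mathrm{S}_{{\bf p}}$ and $a'\in \mathrm{U}_{{\bf p}}$ for some critical point ${\bf p}\in \mathrm{Crit}(f)\cap f^{-1}(0)$, and the relevant portions of $\mathrm{S}_{{\bf p}}$ and $\mathrm{U}_{{\bf p}}$ sit comfortably inside $V^{\vre}_{\gamma}$ by the choice of $\vre$ and $\gamma$ made in the bullet points preceding Proposition \ref{model.prop}. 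In either case $\sigma(x_n)\to (a_\infty, m, b)$, with $f(a_\infty) = -\theta'$ by continuity, so the limit lies in the codomain $\tilde f^{-1}(-\theta')\cap \tilde V_{\theta_1}$ of $\sigma$. Applying properness of $\sigma$ now produces a convergent subsequence $x_{n_k}\to x_\infty \in N$; then every coordinate of $u_{n_k}$ converges to a point of $\overline{W_2}\cap Z_{\sigma} = \mathcal{A}_{\sigma}$ by continuity of the defining relations, contradicting the absence of a convergent subsequence.

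The principal obstacle is verifying that $a_\infty$ actually lies in $V^{\vre}_{\gamma}$ and not merely in its closure, for otherwise the limit would sit on the boundary of the codomain of $\sigma$ and properness would not directly apply. In the $t^*<1$ regime flow-convexity handles this cleanly; in the $t^*=1$ regime one must use the specific form of $\partial^1_1\overline{W_2}$ to locate $a_\infty$ on $\mathrm{S}_{{\bf p}}$ and then appeal to the openness of $V^{\vre}_{\gamma}$ as a neighborhood of $V_0^{\vre}$ (a consequence of the Dulac-map continuity in Theorem \ref{teo.vizinhanca}(3)) together with the fact that $-\theta'$ is strictly interior to $[-\theta_1,\theta_1]$. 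This substitutes the simple remark used in the proof of Proposition \ref{prop.propria} that $s_0(B_0)\subset V_\theta$, with continuity of $s_0$ being replaced here by the stronger properness hypothesis on $\sigma$.
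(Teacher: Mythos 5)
Your proof is correct and follows essentially the same route as the paper's: the sequential criterion for properness, a case split on whether the time coordinate tends to $1$, the flow identity plus flow-convexity of $V_{\theta_1}$ when $t^*<1$, compactness near the $t=1$ stratum when $t^*=1$, and finally properness of $\sigma$ to recover convergence of the $N$-coordinate. The only cosmetic difference is that the paper extracts convergence of $(a_n',a_n)$ jointly from a compact neighborhood of the corner stratum of $\overline{W}\cap([0,1]\times V_{\theta_1}\times V_{\theta_1})$, whereas you use precompactness of $V_{\theta_1}$ together with the boundary description of $\overline{W_2}$ to locate the limit on a stable manifold; these amount to the same argument.
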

\begin{proof} Suppose just like in Proposition \ref{prop.propria} that there exists a sequence $u_n\in\mathcal{A}_{\sigma} $ such that $u_n\ra \infty$ and $\mathcal{R}_{\sigma}(u_n)$ converges. Now, $u_n$ has six components
\[ u_n=(t_n,a_n',a_n,m_n,b_n, z_n)
\]
with $t_n\in [0,1]$. By passing to a subsequence we can assume that $t_n\ra t'\in[0,1]$. We will show that in both cases $t'=1$ and $t'\neq 1$ one reaches a contradiction by showing that there exists a subsequence of $u_n$ which converges in $\mathcal{A}_{\sigma}$.

For the case $t'=1$ we will use the fact that $\{t=1\}\cap   \overline{W}\cap ([0,1]\times V_{\theta_1}\times V_{\theta_1})$ is compact and thus has a compact neighborhood. In fact, one notices that $\overline{W}$ is completely transverse to $V_{\theta_1}\times V_{\theta_1}\times \bR$ and their intersection is a manifold with corners, with one of the codimension $1$ boundaries being contained in $\{t=1\}$
\[ \bigcup_{{\bf p}\in\Crit(f)\cap f^{-1}(0)}\{1\}\times(U_{\bf p}\cap V_{\theta_1})\times(S_{\bf p}\cap V_{\theta_1}).
\]
This is compact. We conclude that from $(a_n',a_n)$ we can extract a subsequence denoted again $(a_n',a_n)$ that converges to a point in $V_{\theta_1}\times V_{\theta_1}$.  On the other hand, we have by  hypothesis that $(a_n',m_n,b_n)$ converges. Hence by passing to a subsequence we conclude that $(a_n,m_n,b_n)$ converges to some point $(a,m,b)$. But $(a_n,m_n,b_n)=(\sigma_1(z_n),\sigma_2(z_n),\sigma_3(z_n))$ and $\sigma$ is proper. It follows that we can extract yet another subsequence this time from $z_n$  that converges to $z\in N$ (just take $\sigma^{-1}(K)$ where $K$ is a compact neighborhood of $(a,m,b)$). But then by the continuity of $\sigma$ we get that $(a_n,m_n,b_n, z_n)$ converges to $(a,m,b,z)\in\Gamma_{\sigma}$. Since $a_n'$ converges to a point in $V_{\theta_1}$ we conclude that $u_n$ converges to a point in $\mathcal{A}_{\sigma}$, contradiction with $u_n\ra \infty$.

For $t'\neq 1$ we use that $a_n'=\psi_{\frac{t_n}{1-t_n}}(a_n)$ and since $a_n'$ converges and $\frac{t_n}{1-t_n}\ra \frac{t'}{1-t'}$ we conclude that $a_n$ also converges. We claim that $a_n$ converges to $a\in V_{\theta_1}$. First $a_n'\ra a'\in V_{\theta_1}$ by hypothesis. Now $V_{\theta_1}$ is a flow-convex neighborhood and $a'=\psi_{\frac{t'}{1-t'}}(a)$. It follows that $a\in V_{\theta_1}$. Now the contradiction is obtained as before $(a_n,m_n,b_n)=(\sigma_1(z_n),\sigma_2(z_n),\sigma_3(z_n))$, etc.
\end{proof}

We now complete the proof of Proposition \ref{model.prop}.
\begin{proof} We need only be concerned with items (ii)-(iv). The distinguished boundary is defined as:
\[ N':=\partial_{\theta_1}^1\mathcal{A}_{\sigma}:=\overline{W_2}\cap \partial^1_1Z_{\sigma}=\overline{W_2}\cap( [0,1]\times {f}^{-1}(\theta_1)\times \Gamma_{\sigma}).
\]
Clearly $\mathcal{R}_{\sigma}({\partial_{\theta_1}^1\mathcal{A}_{\sigma}})\subset \tilde{f}^{-1}(\theta_1)$.

In order to prove (ii) it is useful to consider the projection $\tilde{\mathcal{R}}_{\sigma}$ from $\mathcal{A}_{\sigma}$ onto the second, fourth, fifth and sixth components. Then the image of this map will give the closure of the flow-out of $\Gamma_{\sigma}$ inside $V_{\theta_1}\times M\times B_0\times \bR^j$ and the proof of this fact follows the same lines as the proof of item (1) in Proposition \ref{prop.1}. Then projecting the closure of the flow-out of $\Gamma_{\sigma}$ onto $V_{\theta_1}\times M\times B_0$ equals $\overline{\left(\bigcup_{t\geq 0}\xi_t(\sigma(N))\right)}\cap \tilde{V}_{\theta_1}$ and this takes care of (ii).

The map $\tilde{R}_{\sigma}$ mentioned in the previous paragraph is injective on $\mathcal{A}_{\sigma}\setminus \{t=1\}$ since on this set all points are of type
\[ (t,\psi_{\frac{t}{1-t}}(\sigma_1(z)),\sigma_1(z),\sigma_2(z),\sigma_3(z),z),\qquad z\in N, t\in [0,1)
\]
which get projected to $(\psi_{\frac{t}{1-t}}(\sigma_1(z)),\sigma_2(z),\sigma_3(z),z)$. Clearly all the points in the forward-flowout of $\Gamma_{\sigma}$ that are inside $V_{\theta_1}\times M\times B_0\times \bR^j$ are also in the image of this projection. In order to obtain a set where $\mathcal{R}_{\sigma}$ is injective one needs to take out more points.

Since $\sigma\bigr|_{\partial^2N}$ is completely transverse when restricted to all distinguished boundary pieces $\partial^2_i N$ of $\partial^2N$, it follows that we can define a natural subspace $\mathcal{A}_{\sigma\bigr|_{\partial^2N}}$ of $\mathcal{A}_{\sigma}$ by taking the union of the corresponding sets $ \overline{W_2}\cap Z_{\sigma\bigr|_{\partial^2_iN}}$.  Since this $\mathcal{A}_{\sigma\bigr|_{\partial^2N}}$ will be a union of manifolds with corners of lower dimension it will  have measure zero inside $\mathcal{A}_\sigma$. Then $\mathcal{R}_{\sigma}$ will be injective on $\mathcal{A}_{\sigma}\setminus \left(\{t=1\}\cup \mathcal{A}_{\sigma\bigr|_{\partial^2N}}\right)$  and this takes care of (iii).

In order to prove transversality we can use  Lemma \ref{transvlem} in order to reduce  to the proof of transversality of $\tilde{\mathcal{R}}_{\sigma}\bigr|_{\partial^1_{\theta_1}\mathcal{A}_{\sigma}}$ with $S_p\times M\times  B_0\times \bR^j$. This then follows the same scheme as Lemma \ref{otranslem}.

For injectivity we note that $ \partial^1_0\overline{W_2}\cap\partial^1_1Z_{\sigma}=\emptyset$ and we separate $ N'=\partial^1_{\theta_1}\mathcal{A}_{\sigma}$ into two parts:
\[ N_1:=(\overline{W_2}\setminus \{t=1\})\cap \partial^1_1Z_{\sigma} \;\;\mbox{and}\;\; N_2:=\partial^1_1\overline{W_2}\cap\partial^1_1Z_{\sigma}.
\]
We observe that $\mathcal{R}_{\sigma}$ takes $N_1$ and $N_2$ to two disjoint sets in $\tilde{V}_{\theta}$ distinguished by the fact that the first component belongs  to $\cup_{p\in \Crit(f)\cap f^{-1}(0)} U_p$ (in the case of points in $\mathcal{R}_{\sigma}(N_2)$) or does not belong to the same set (for  $\mathcal{R}_{\sigma}(N_1)$).  We have that $N_2$ is a distinguished boundary of $N'$ while $N_1$ contains the top stratum of $N'$. Hence it is enough to prove the injectivity of $\mathcal{R}_{\sigma}$ separately on $N_1\setminus \partial^2 N_1$ and on $N_2\setminus \partial^1N_2$. 

The points in $N_1\setminus \partial^2 N_1$ are of type $\left(t_{0},\psi_{\frac{t_0}{1-t_0}}(\sigma_1(z)),\sigma_1(z),\sigma_2(z),\sigma_3(z),z\right)$ with $z\in N\setminus \partial^2N$ and they get mapped to $(a,m,b):=\left(\psi_{\frac{t_0}{1-t_0}}(\sigma_1(z)),\sigma_2(z),\sigma_3(z)\right)\in f ^{-1}(\theta_1)\times M\times B_0$. Then $t_0$ is the unique time it takes to flow backwards from point $a\in f ^{-1}(\theta_1)$ to level $f^{-1}(-\theta')$, $\sigma_1(z)$ is the point of intersection with level $f^{-1}(-\theta')$ of the trajectory determined by $a$, and $z\in N\setminus \partial^2N$ is uniquely determined by $\sigma(z)$ due to the hypothesis.

The description $N_2=\displaystyle\bigcup_{{\bf p}\in f^{-1}(0)\cap \Crit(f)}\{1\}\times (U_{\bf{p}}\cap f^{-1}(\theta_1))\times \Gamma_{\sigma\bigr|_{\sigma^{-1}(S_{\bf p}\times M\times B_0)}}$ is useful. For the top stratum $(N_2)^0$ of $N_2$, one restricts $\Gamma_{\sigma}$ to $\sigma^{-1}(S_{\bf p}\times M\times B_0)\cap N^0$. If $p=(u,\sigma_1(z),\sigma_2(z),\sigma_3(z),z)\in (N_2)^0$ then $u$ does not determine  $\sigma_1(z)$ anymore, but property (d) says that $\sigma_1(z)$ is determined by $(\sigma_2(z),\sigma_3(z))$. Since $\sigma$ is also injective on $N^0$ we get that $p$ determines $z$. Hence the map on $(N_2)^0$ that takes $p$ to $(u,\sigma_2(z),\sigma_3(z))$ is injective and this finishes the proof of this issue.

Finally, property (d) itself  holds for $\sigma':N'\ra \tilde{V}_{\theta_1}\cap ({f}^{-1}({\theta_1})\times M\times B_0)$. This follows from the description of $(N')^0=(N_1)^0$ above and the fact that $\sigma$ is injective on $N^0$ and also satisfies property (d).
\end{proof}

\begin{lemma}\label{transvlem} Let $\tilde{\sigma}:\tilde{N}\ra \tilde{M}\times \bR^j$ be a smooth map. Then the (complete) transversality of $\tilde{\sigma}$ with $\tilde{S}\times \bR^j$, for some submanifold $\tilde{S}\subset \tilde{M}$ implies the (complete) transversality of $\tilde{\pi}\circ\sigma$ with $\tilde{S}$ where $\tilde{\pi}:\tilde{M}\times \bR^j\ra \tilde{M}$ is the projection.
\end{lemma}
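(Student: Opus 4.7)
The plan is to reduce the lemma to an elementary linear-algebra observation about how the tangent space of a product splits along the factors, applied stratum-by-stratum.

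First, I would fix notation: write $\tilde{\sigma}=(\sigma_M,\sigma_E)$ where $\sigma_M=\tilde{\pi}\circ\tilde{\sigma}:\tilde{N}\to\tilde{M}$ and $\sigma_E:\tilde{N}\to\bR^j$. Complete transversality is just transversality of the restrictions to every pair of strata of $\tilde{N}$ and $\tilde{S}\times\bR^j$, so after choosing a stratum of $\tilde{N}$ and restricting $\tilde{\sigma}$ to it, it suffices to prove the plain (non-stratified) statement. I therefore fix $p\in\tilde{N}$ (inside a chosen stratum) with $\sigma_M(p)\in\tilde{S}$, so automatically $\tilde{\sigma}(p)\in\tilde{S}\times\bR^j$, and I need to show
\[
d(\sigma_M)_p\bigl(T_p\tilde{N}\bigr)+T_{\sigma_M(p)}\tilde{S}=T_{\sigma_M(p)}\tilde{M}.
\]

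Next, I would use the canonical splittings $T_{\tilde{\sigma}(p)}(\tilde{M}\times\bR^j)\cong T_{\sigma_M(p)}\tilde{M}\oplus\bR^j$ and $T_{\tilde{\sigma}(p)}(\tilde{S}\times\bR^j)\cong T_{\sigma_M(p)}\tilde{S}\oplus\bR^j$, under which $d\tilde{\sigma}_p(w)=\bigl(d(\sigma_M)_p(w),\,d(\sigma_E)_p(w)\bigr)$ and $\tilde{\pi}$ restricts to the obvious projection onto the first factor. Given any $v\in T_{\sigma_M(p)}\tilde{M}$, I consider $(v,0)\in T_{\sigma_M(p)}\tilde{M}\oplus\bR^j$. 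By the assumed transversality of $\tilde{\sigma}$ with $\tilde{S}\times\bR^j$ at $p$, there exist $w\in T_p\tilde{N}$, $s\in T_{\sigma_M(p)}\tilde{S}$ and $t\in\bR^j$ such that
\[
(v,0)=d\tilde{\sigma}_p(w)+(s,t)=\bigl(d(\sigma_M)_p(w)+s,\;d(\sigma_E)_p(w)+t\bigr).
\]
Projecting onto the first factor via $\tilde{\pi}$ yields $v=d(\sigma_M)_p(w)+s$, which is exactly the required decomposition; this proves transversality on the fixed stratum.

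Finally, since the argument only used the transversality hypothesis restricted to that pair of strata (the stratum of $\tilde{N}$ through $p$ and $\tilde{S}\times\bR^j$, itself a single stratum in the codomain), running it on every stratum of $\tilde{N}$ yields complete transversality of $\sigma_M=\tilde{\pi}\circ\tilde{\sigma}$ with $\tilde{S}$. There is no real obstacle here; the content of the lemma is simply that projecting the transversality identity onto the $\tilde{M}$-factor preserves surjectivity because the $\bR^j$-direction is fully contained in $T(\tilde{S}\times\bR^j)$ and is therefore harmless.
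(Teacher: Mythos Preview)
Your proof is correct and is exactly the elementary argument the paper has in mind: the paper's own proof consists of the single word ``Straightforward.'' Your splitting-and-projecting argument is the natural way to unpack that, and there is nothing more to add.
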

\begin{proof} Straightforward.
\end{proof}

We now derive an important consequence of Proposition \ref{model.prop} in terms of currents. Let $T_{\sigma}=\mathcal{R}_{\sigma}(\mathcal{A}_{\sigma})$ be the flow-out of $\sigma(N)$ between levels $-\theta'$ and $\theta_1$. It is a rectifiable current, once it is endowed, over the points where $\mathcal{R}_{\sigma}$ is a bijection, with the orientation induced by the direction of the flow and the orientation of $N^0$.

Let $T_{\sigma\bigr|_{\partial^1_j N}}:=\mathcal{R}_{\sigma}(\overline{W_2}\cap \partial^1_{j+1} Z_{\sigma})$ be the flow-out of $\sigma(\partial^1_j N)$ for the $j$-th codimension $1$ boundary of $N$ also between levels $-\theta'$ and $\theta_1$. This is  a rectifiable current of dimension $n$.

\begin{corollary} \label{Impcor}Let $\sigma$ be a smooth map as in Proposition \ref{model.prop}.  The following currential equation holds in the open set $\tilde{V}_{\theta_1}\cap\tilde{f}^{-1}(-\theta',\theta_1)$.
\begin{eqnarray}\quad \label{cordT}dT_{\sigma}=\sum_{{\bf p}\in \Crit(f)\cap f^{-1}(0)} (U_{\bf p}\cap f^{-1}[0,\theta_1))\times (\sigma_2,\sigma_3)(\sigma^{-1}_1(S_{\bf p})) +\sum_{j=1}^{f_N}T_{\sigma\bigr|_{\partial^1_j N}} 
\end{eqnarray}
\end{corollary}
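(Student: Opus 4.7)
The strategy is to identify the current $T_\sigma$ with the push-forward $(\mathcal{R}_\sigma)_*(\mathcal{A}_\sigma)$, commute the exterior derivative with the push-forward via Lemma~\ref{lema.ideia}, and then match the pieces of $\partial\mathcal{A}_\sigma$ against the right-hand side of (\ref{cordT}), noting that those boundary components lying at the levels $\tilde{f}=-\theta'$ or $\tilde{f}=\theta_1$ disappear upon restriction to the open set $\tilde{V}_{\theta_1}\cap\tilde{f}^{-1}(-\theta',\theta_1)$.

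First I would argue that $T_\sigma=(\mathcal{R}_\sigma)_*(\mathcal{A}_\sigma)$ as currents. Properness of $\mathcal{R}_\sigma$ from Proposition~\ref{proper2} makes the push-forward well-defined, and item (iii) of Proposition~\ref{model.prop} asserts that $\mathcal{R}_\sigma$ restricts to a bijection on an open subset of full measure of $\mathcal{A}_\sigma$ onto an open subset of full measure of $T_\sigma$. By the area formula, the push-forward is then the integration current on the image with multiplicity one, carrying the chosen orientation coming from the flow direction together with the orientation of $N^0$.

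Next, Lemma~\ref{lema.ideia} applied to the proper smooth map $\mathcal{R}_\sigma$ on the oriented manifold with corners $\mathcal{A}_\sigma$ yields $dT_\sigma=(\mathcal{R}_\sigma)_*(\partial\mathcal{A}_\sigma)$. By Lemma~\ref{Lem} the codimension-one boundary $\partial\mathcal{A}_\sigma$ decomposes into four kinds of faces: the $t=1$ stratum $\partial^1_1\overline{W_2}\cap Z_\sigma$; the $t=0$ stratum $\partial^1_0\overline{W_2}\cap Z_\sigma$, which by (\ref{levelthetaprim}) is $\{0\}\times\Gamma_{\tilde\sigma}$ and hence is mapped by $\mathcal{R}_\sigma$ into $\tilde{f}^{-1}(-\theta')$; the level-$\theta_1$ stratum $\overline{W_2}\cap\partial^1_1Z_\sigma$, whose image lies in $\tilde{f}^{-1}(\theta_1)$; and the lateral strata $\overline{W_2}\cap\partial^1_{j+1}Z_\sigma$ for $1\leq j\leq f_N$, whose push-forwards equal $T_{\sigma\bigr|_{\partial^1_j N}}$ by the very definition of the latter. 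The face $\overline{W_2}\cap\partial^1_0Z_\sigma$ is empty by (\ref{emptyset}). Restricting to the open set $\tilde{V}_{\theta_1}\cap\tilde{f}^{-1}(-\theta',\theta_1)$ now annihilates the $t=0$ and the level-$\theta_1$ contributions since both are supported at the two excluded level sets. What remains is the sum of $(\mathcal{R}_\sigma)_*(\partial^1_1\overline{W_2}\cap Z_\sigma)$, which by the explicit description of Lemma~\ref{Lem} equals $\sum_{\bf p}(U_{\bf p}\cap f^{-1}[0,\theta_1))\times(\sigma_2,\sigma_3)(\sigma_1^{-1}(S_{\bf p}))$, together with the lateral sum $\sum_{j=1}^{f_N}T_{\sigma\bigr|_{\partial^1_j N}}$, matching (\ref{cordT}) exactly.

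The main obstacle I anticipate is the bookkeeping of orientations: each boundary face inherits an orientation from the product and graph conventions placed on $\overline{W_2}$ and $Z_\sigma$, and one must verify that these induced orientations agree with the orientations tacitly assigned to the unstable-manifold currents and the flow-out currents $T_{\sigma\bigr|_{\partial^1_j N}}$ on the right-hand side of (\ref{cordT}). This reduces to checking that at the $t=1$ face the outward conormal points in the flow direction (so the induced orientation on $(U_{\bf p}\cap f^{-1}[0,\theta_1))\times(\sigma_2,\sigma_3)(\sigma_1^{-1}(S_{\bf p}))$ is the product of the unstable-manifold orientation with the orientation of $\sigma_1^{-1}(S_{\bf p})$ induced by transversality), and that on each lateral face the orientation inherited from $\partial^1_j N$ matches the one used to define $T_{\sigma\bigr|_{\partial^1_j N}}$. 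These are routine sign computations that nevertheless need to be carried out explicitly to rule out sign ambiguities.
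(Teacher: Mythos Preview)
Your proposal is correct and follows essentially the same approach as the paper: identify $T_\sigma$ with $(\mathcal{R}_\sigma)_*(\mathcal{A}_\sigma)$ via properness and the injectivity property, push Stokes forward through $\mathcal{R}_\sigma$ (Lemma~\ref{lema.ideia}), and read off the boundary pieces from Lemma~\ref{Lem}, discarding those supported at levels $-\theta'$ and $\theta_1$. The paper's proof is a two-line summary of exactly this argument; your version simply spells out the matching of faces and flags the orientation bookkeeping, which the paper handles implicitly through the conventions fixed just before Proposition~\ref{proper2}.
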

\begin{proof} One uses Lemma \ref{Lem}, Proposition \ref{model.prop} and  Stokes on the manifold with corners $\mathcal{A}_{\sigma}$ pushed-forward via the proper map $\mathcal{R}_{\sigma}$. The injectivity property identifies $(\mathcal{R}_{\sigma})_*(\mathcal{A}_{\sigma})$ with $T_{\sigma}$ and  $(\mathcal{R}_{\sigma})_* (\overline{W_2}\cap \partial^1_{j+1}Z_{\sigma})$ with $T_{\sigma\bigr|_{\partial^1_j N}}$ for all $j$. \end{proof}

\begin{remark} It is important to understand why equation (\ref{cordT}) was not stated as an identity directly in the open set  $\tilde{f}^{-1}(-\theta',\theta_1)$. The conditions in the statement of Proposition \ref{model.prop}  do not exclude the possibility that the image of $\sigma$ oscillates wildly close to the topological boundary of $\tilde{V}_{\theta_1}\cap \tilde{f}^{-1}(-\theta')$ inside $\tilde{f}^{-1}(-\theta')$ so that $\sigma_*(N)$ might not be extendable as a current outside this neighborhood. This is of course not the case for the situation where we will apply Proposition \ref{model.prop}. In the first step of the induction, $\sigma$ is the restriction of $\mathcal{R}$ to $\partial^1_2\mathcal{A}_{\theta}$ and the image of this map "away" from $U_{{\bf p}_1}\times S_{{\bf p}_1}\times B_0$ is simply the flow-out to level $\theta$ of $\xi_0(B_0\setminus B_0')$ where $B_0'\subset B_0$ is a smaller neighborhood around the point of interest $b_0$. Hence close to the topological boundary of $\tilde{V}_{\theta}$, or "away" from $U_{{\bf p}_1}\times S_{{\bf p}_1}\times B_0$ the image of the map is really an embedded submanifold, extendable beyond the topological boundary. For the other steps of the induction we make the following observation.
\end{remark}
\begin{remark}
Proposition \ref{model.prop} is what allows us to cross critical levels at least if the image of the map lands close to the stable manifold(s) of the critical point(s) at level $0$. But when flowing between two consecutive critical levels, nothing guarantees that the flow-out of the image of the resolution at the first critical level will end-up within a neighborhood of type $\tilde{V}_{\theta_1}$ so as to satisfy  the hypothesis of Proposition \ref{model.prop}.

One solution is to do the following. Suppose that in fact $\sigma:N\ra \tilde{f}^{-1}(-\theta_1)$ and for each ${\bf p}\in \Crit(f)\cap f^{-1}(0)$ there are neighborhoods $D_p$ of $(f^{-1}(\theta_1)\cap S_{\bf p})\times M\times B_0$ such that $\sigma\bigr|_{\sigma^{-1}(D_{\bf p})}$ is proper. Then one gets a restriction map $\hat{\sigma}$ of $\sigma$ to an open set of $N$, which is proper and whose image is contained in $\tilde{V}_{\theta_1}$ as in Propostion \ref{model.prop}.  

What about the rest of $N$? Take  $D_{-\theta_1}$ to be \emph{the complement} of $\bigcup_{{\bf p}\in \Crit(f)\cap f^{-1}(0)}(f^{-1}(\theta_1)\cap S_{\bf p})\times M\times B_0$ in $\tilde{f}^{-1}(\theta_1)$ and let $D_N:=\sigma^{-1}(D_{-\theta_1)}$. Look at $\sigma_1:=\sigma\bigr|_{D_N}$. Use the flow diffeomorphism to get from $\sigma_1$ a map $\tilde{\sigma}_1:D_N\ra \tilde{f}^{-1}(\theta_1)$. 

We claim that there exists an open subset of $D_N$ and an open subset of $\partial^1_{\theta_1}\mathcal{A}_{\sigma}$ which are diffeomorphic via a diffeomorphism $\alpha$ such that
\[ \mathcal{R}_{\sigma}\circ\alpha=\tilde{\sigma}_1
\] 
Take $\sigma^{-1}(U)$ where $U:=D_{-\theta_1}\cap \tilde{V}_{\theta_1}$. This is obviously an open set diffeomorphic with a open subset of $D_N$. On the other hand by taking $\overline{W_2}\cap \left(\bR\times f^{-1}(\theta_1)\times \Gamma_{\sigma\bigr|_{\sigma^{-1}(U)}}\right)$ one obtains an open subset of $\partial^1_{\theta_1}\mathcal{A}_{\sigma}$ which is obviously diffeomorphic with $\sigma^{-1}(U)$.

We can then use the diffeomorphism $\alpha$ in order to "glue" $\mathcal{R}_{\sigma}$ and $\sigma_1$ to a smooth map going  from a manifold with corners $\tilde{N}$ to $\tilde{f}^{-1}(\theta_1)$ and flow to the next critical level and apply again Proposition \ref{model.prop} and this Remark and so on. 
\end{remark}

\begin{proof} \emph{of Theorem} \ref{tlimxi}.

  We have already sketched the proof strategy  at the end of Section \ref{Sec3}. We need only explain what are the open sets $M_j\times M\times B_0$ that cover $M\times M\times B_0$ where the currential identity is true. We will discuss only the situations where $B_0$ is a small neighborhood around a point $b_0$ such that the forward trajectory determined by $s(b_0)$ ends at a non-maximal point. The remaining situation was already discussed in Remark \ref{Fmax}.
  
  Let $c_0<\ldots <c_l$ be the consecutive critical levels of $f$  excluding the maximum with $c_0$ the first encountered critical level by the forward trajectory of $s(b_0)$. It might even be a minimal level of $f$, i.e. a level that contains a local minimum if $s(b_0)$ is a local minimum.  Let $c_0<\delta_1<\delta_2<c_1<\delta_3<\delta_4<\ldots <c_{l-1}<\delta_{2l-1}<\delta_{2l}<c_l< \delta_{2l+1}$ be regular level sets such that $\delta_1=c_0+\theta_0$ and for $k\geq 1$,  $\delta_{2k}=c_{k}-\theta_k$ and $\delta_{2k+1}=c_k+\theta_k$ where $\theta_k$ is chosen small enough so that we can chose neighborhood $V_{\theta_k}$ around the critical points of level $c_k$ satisfying the conditions of Proposition \ref{prop.triplo}.
  
  Corollary \ref{coreqfund} takes care of the first step of induction and implies the formula for 
  \[{f}^{-1}(-\infty,\delta_1)\times M\times B_0.\] By property (4) of Proposition \ref{prop.1} and Lemma \ref{Lem} there exists a map  for some $\epsilon_1>0$:
  \[\sigma:\partial^1_2\mathcal{A}_1\ra {f}^{-1}(\delta_1-\epsilon_1)\times M\times B_0\]
whose image contains the closure of the forward-flow of $\xi_0(B_0)$ intersected with level $\delta_1-\epsilon_1$ and is transverse to all $S_{\bf {p}}\times M\times B_0$ for all critical ${\bf p}$. Moreover $\partial^1_2\mathcal{A}_1$ is a manifold with boundary $\partial^2\mathcal{A}_1=U_{{\bf p}_1}\cap {f}^{-1}(\delta_1-\epsilon_1)\times s(s^{-1}(S_{{\bf p}_1})\times B_0)$ and $\sigma$ satisfies the conditions of Proposition \ref{model.prop}.

If we let $T^1_{\sigma}$ to be the current determined by the flow-out of the image of $\sigma$ in the open set ${f}^{-1}(\delta_1-\epsilon_1,\delta_2+\epsilon_2)\times M\times B_0$ for some small $\epsilon_2$ where there are no critical points. Let $T^1_{\partial \sigma}$ be the flow-out of the image of $\sigma\bigr|_{\partial^2\mathcal{A}}$. Both are rectifiable currents with the obvious orientation. The following  identity of currents holds on ${f}^{-1}(\delta_1-\epsilon_1,\delta_2+\epsilon_2)\times M\times B_0$:
\[ dT^1_{\sigma}=T_{\partial \sigma}^1=(U_{{\bf p}_1}\cap {f}^{-1}(\delta_1-\epsilon_1,\delta_2+\epsilon_2))\times s(s^{-1}(S_{{\bf p}_1})\times B_0)
\]
proving thus the theorem on ${f}^{-1}(\delta_1-\epsilon_1,\delta_2+\epsilon_2)\times M\times B_0$.

From this point on we repeatedly apply Proposition \ref{model.prop} and Corollary \ref{Impcor}. We notice that $(\sigma_2,\sigma_3)(\sigma_1^{-1}(S_p))$ can be substituted with $(\sigma_2,\sigma_3)(\sigma_1^{-1}(S_p)\cap N^0)$ and these points are easy to describe as $s(s^{-1}(S_p\times B_0))$ where $p$ is any of the critical points at $i$-th step. It is not any difficult to see that $T_{\sigma\bigr|_{\partial^1_j N}}$ is a sum:
\[  \sum_{p} (U_p\cap f^{-1}(c_i-\theta_i,c_i+\theta_i))\times s(s^{-1}(S_p\times B_0))
\]
where the sum here runs over the critical points that have already been crossed.
\end{proof}
\begin{remark}\label{FinHm} There is a subtlety in the proof of Theorem \ref{tlimxi}, that is easy to miss. In order to prove an equality of currents, these have to exist to begin with. In particular $T$ and $U(F)\times_Fs(s^{-1}(S(F)))$ should be shown to have finite local $n+1$ and respectively $n$-Hausdorff measures. But this is a straightforward corollary of the existence of the flow resolutions we have constructed. As an immediate consequence we get via Fubini that $U(F)$ and $s(s^{-1}(S(F)))$ have finite Hausdorff measures if $B$ is compact.
\end{remark}

\section{Odd Chern-Weil theory}\label{OCW}
We now start a new topic altogether.  Let $E\ra B$ be a hermitian vector bundle over a compact manifold $B$.  Denote by $\mathscr{U}(E)$ the fiber bundle of unitary isomorphisms and let $U\in\Gamma(\mathscr{U}(E))$  be a section of this bundle. Let $\nabla$ be a connection compatible with the metric.  

Given any invariant polynomial $P$, we introduce  odd degree forms $\TP(E,U,\nabla)\in \Omega^*(B)$, called odd Chern-Weil forms, which satisfy the following properties
\begin{itemize}
\item[(a)] $d\TP(E,U,\nabla)=0$;
\item[(b)] $\TP(E,U,\nabla)-\TP(E,U,\nabla')$ is exact for any two metric compatible connections $\nabla,\nabla'$;
\item[(c)] if $U_0,U_1\in \Gamma(\mathscr{U}(E))$ are homotopic then $\TP(E,U_0,\nabla)-\TP(E,U_1,\nabla)$ is exact. 
\item[(d)] if $\varphi:B_1\ra B$ is a smooth map then $\varphi^*\TP(E,U,\nabla)=\TP(\varphi^*E,\varphi^*U,\varphi^*\nabla)$.
\end{itemize}

Recall the fundamental Theorem of Chern-Weil theory. If $P$ is an invariant polynomial and $\nabla_1$ and $\nabla_2$ are compatible connections then one can associate to $\nabla_0$ and $\nabla_1$ two forms $P(E,\nabla_1)$ and $P(E,\nabla_0)$ and  there exists a non-unique form $\TP(\nabla_1,\nabla_0)$ such that
\[P(E,\nabla_1)-P(E,\nabla_0)=d\TP(\nabla_0,\nabla_1).
\]
 The construction of a particular such  form $\TP(\nabla_1,\nabla_0)$ goes as follows. Let $\pi_2^*E\ra [0,1]\times B$ be the pull-back of $E$ with respect to the projection $[0,1]\times B\ra B$. Consider the following connection on $\pi_2^*E$
\[ \tilde{\nabla}:=\frac{d}{dt}+(1-t)\nabla_0+t\nabla_1.
\]
Then the standard homotopy formula implies that
\[ d\left(\int_{[0,1]}P(\pi_2^*E,\tilde{\nabla})\right)=P(E,\nabla_1)-P(E,\nabla_0).
\]
where integration on the left is over the fibers of $\pi_2$. Define $\TP(\nabla_0,\nabla_1)$ to be $\int_{[0,1]}P(\pi_2^*E,\tilde{\nabla})$.

It is not hard to see that
\[\TP(\nabla_0,\nabla_1)=-\TP(\nabla_1,\nabla_0).
\]
This is because the diffeomorphism on $[0,1]\times M\ra  [0,1]\times M$, $(t,m)\ra (1-t,m)$ reverses the orientation of the fiber and fiber integration is sensitive to this. 

\begin{remark} If one takes different paths between the connection $\nabla_0$ and $\nabla_1$, by a result of Simons and Sullivan \cite{SS} the transgression forms obtained for two different paths differ by an exact form.
\end{remark}

In our context, consider the connections $\nabla_0:=\nabla$ and $\nabla_1=U^{-1}\nabla U$ on $E$ and write down the transgression formula from Chern-Weil theory:
\[P(F(U^{-1}\nabla U))- P(F(\nabla))=d\TP(\nabla,U^{-1}\nabla U),
\]

Notice that $F(U^{-1}\nabla U)=U^{-1}F(\nabla)U$. It follows from the fact that $P$ is invariant that $P(F(\nabla))=P(F(U^{-1}\nabla U))$, hence the form $\TP(\nabla,U^{-1}\nabla U)$
 satisfies  property (a) above.

The next lemma helps prove property (b) for $\TP(\nabla,U^{-1}\nabla U)$.
 \begin{lemma} If $\nabla_i$, $i=1,4$ and  are four metric compatible connections then
 \[ \sum_{i}\TP(\nabla_i,\nabla_{i+1})
 \]
 is exact where $\nabla_5:=\nabla_1$.
 \end{lemma}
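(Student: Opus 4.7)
The plan is to reduce the four-term statement to a three-term \emph{triangle identity}: for any three metric compatible connections $\nabla_a,\nabla_b,\nabla_c$ on $E$, the combination
\[\TP(\nabla_a,\nabla_b)+\TP(\nabla_b,\nabla_c)+\TP(\nabla_c,\nabla_a)\]
is exact. Once this is available, the four-connection statement follows formally: apply the triangle identity to the two triples $(\nabla_1,\nabla_2,\nabla_3)$ and $(\nabla_1,\nabla_3,\nabla_4)$, add the two exact-modulo relations, and invoke the \emph{exact} identity $\TP(\nabla_1,\nabla_3)+\TP(\nabla_3,\nabla_1)=0$ recorded in the excerpt (consequence of orientation-reversal under $t\mapsto 1-t$) to cancel the two diagonal contributions. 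What remains is precisely $\sum_{i=1}^{4}\TP(\nabla_i,\nabla_{i+1})$ with $\nabla_5=\nabla_1$, which is therefore exact.

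To establish the triangle identity I would work on the standard $2$-simplex $\Delta^2=\{(t_1,t_2,t_3)\in[0,1]^3\mid t_1+t_2+t_3=1\}$. Let $\pi:\Delta^2\times B\to B$ denote the projection and equip $\pi^*E$ with the affine combination
\[ \widetilde{\nabla}:=d_{\Delta^2}+t_1\nabla_a+t_2\nabla_b+t_3\nabla_c, \]
which makes sense because the space of compatible connections is affine. By Chern-Weil, $P(F(\widetilde{\nabla}))$ is a closed form on $\Delta^2\times B$, so fiber integration together with Stokes gives
\[ d\int_{\Delta^2}P(F(\widetilde{\nabla}))=\int_{\partial\Delta^2}P(F(\widetilde{\nabla})). \]
Each of the three edges of $\partial\Delta^2$ is of the form $\{t_i=0\}$, and on this edge $\widetilde{\nabla}$ restricts to the straight-line interpolation between the two remaining connections, i.e.\ to precisely the connection whose fiber integral defines one of the three transgression forms. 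Matching the induced boundary orientations with the convention used in the paper's definition of $\TP(\nabla_0,\nabla_1)$ identifies the right-hand side with the cyclic sum $\TP(\nabla_a,\nabla_b)+\TP(\nabla_b,\nabla_c)+\TP(\nabla_c,\nabla_a)$, proving the triangle identity.

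The only real obstacle I anticipate is sign bookkeeping: one must verify that the boundary orientation on the three edges of $\Delta^2$ (induced by the chosen orientation of $\Delta^2$) matches the sign convention coming from the parametrization of $[0,1]$ in the definition of $\TP$. This is a one-time, finite check on the parametrizations and is routine. With that in place, the four-connection statement is an immediate formal consequence of the triangle identity and the antisymmetry $\TP(\nabla_i,\nabla_j)=-\TP(\nabla_j,\nabla_i)$ already noted in the text.
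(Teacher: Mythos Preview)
Your proposal is correct and follows essentially the same route as the paper: first establish the triangle identity for three connections by putting the affine-interpolating connection on $\pi^*E\to\Delta^2\times B$, invoking closedness of $P(F(\widetilde\nabla))$ together with Stokes/fiber integration to identify the boundary contribution with the cyclic sum of transgressions, and then pass from three to four connections using the antisymmetry $\TP(\nabla,\nabla')=-\TP(\nabla',\nabla)$. The only cosmetic differences are your use of barycentric coordinates on $\Delta^2$ versus the paper's $(s,t)$-coordinates, and your explicit two-triangle decomposition versus the paper's one-line ``extend by induction''.
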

 \begin{proof} If $H:C\times B\ra M$ is a smooth map,  $C$ is an oriented, compact manifold with corners of dimension $c$ and $\omega$ is a smooth form on $M$ of degree $k\geq c-1$ then:
 \begin{equation}\label{chom} \int_{C}H^*d\omega+(-1)^{c-1}d\int_{C}H^*\omega=\int_{\partial C}H^*\omega.
 \end{equation}
To see (\ref{chom}), apply first Stokes on $C\times B$ to 
 \[d(H^*\omega\wedge \pi_2^*\eta)=dH^*\omega\wedge \pi_2^*\eta+(-1)^{k}H^*\omega\wedge\pi_2^*d\eta\] where $\eta\in\Omega^{n-k+c-1}(B)$ is a smooth test form and $\pi_2:C\times B\ra B$ is the projection and then integrate over the fiber.  On the closed, oriented $B$ one has:
 \begin{equation}\label{StB}0=d\left(\int_CH^*\omega\right)\wedge \eta+ (-1)^{k-c}\left(\int_CH^*\omega\right)\wedge d\eta
 \end{equation}
 Hence, using the orientation of the fiber first convention we get from (\ref{StB})
 \[(-1)^{k}\int_CH^*\omega\wedge\pi_2^*d\eta=(-1)^{k}\left(\int_CH^*\omega\right)\wedge d\eta=(-1)^{c-1}d\left(\int_CH^*\omega\right)\wedge \eta.
 \]
 Take $C=\Delta^2$ be the standard simplex in $\bR^2$ with coordinates $(s,t)$ and on $\pi_2^*E\ra C\times B$ consider the connection that "interpolates" between $\nabla_1$, $\nabla_2$, $\nabla_3$:
 \[ \tilde{\nabla}:=\frac{d}{ds}+\frac{d}{dt} +\nabla_1+s(\nabla_2-\nabla_1)+t(\nabla_3-\nabla_1)
 \]
 where $\frac{d}{ds}+\frac{d}{dt}$ is the differential on $C$. The form $P(\pi_2^*E,\tilde{\nabla})$ on $C\times B$ is closed and 
 \[\int_{\partial C}P(\pi_2^*E,\tilde{\nabla})=TP(\nabla_1,\nabla_2)+\TP(\nabla_2,\nabla_3)+\TP(\nabla_3,\nabla_1)
 \]
 Use now (\ref{chom}) for $H=\id_{C\times B}$ to conclude that the Lemma works for three connections. Using that $\TP(\nabla,\nabla')=-\TP(\nabla',\nabla)$ one can extend by induction to any finite number of connections.
  \end{proof}

 One applies the lemma with $\nabla_1:=\nabla$, $\nabla_2:=U^{-1}\nabla U$, $\nabla_3=U^{-1}\nabla' U$ and $\nabla_4:=\nabla'$ in order to conclude property (b). Indeed one has
 \[\TP(\nabla_2,\nabla_3)=\TP(\nabla,\nabla')=-\TP(\nabla_4,\nabla_1)
 \]
 
 Property (c) for $\TP(\nabla,U^{-1}\nabla U)$ is proved as follows.  Consider the vector bundle $\pi^*E\ra \mathscr{U}(E)$ where $\pi:\mathscr{U}(E)\ra B$ is the natural projection.  Then $\pi^*E$ has a  connection $\pi^*\nabla$ and it also has a tautological unitary isomorphism $U^{\tau}:\mathscr{U}(E)\ra \mathscr{U}(\pi^*E)$. Therefore there exists a natural transgression closed form $\TP(\pi^*\nabla,(U^{\tau})^{-1}(\pi^*\nabla) U^{\tau})\in\Omega^*(\mathscr{U}(E))$. 
 
 Let $U_t$ be a smooth homotopy between $U_0$ and $U_1$. Then $U_t$ is a section of 
 \[\mathscr{U}(\pi_2^*E)\ra [0,1]\times B.\] 
 Then for any closed form $\omega$ on $\mathscr{U}(\pi_2^*E)$ the standard homotopy formula informs that $U_0^*\omega$ and $U_1^*\omega$ differ by an exact form on $B$.
 Take $\omega:= p^*\TP(\pi^*\nabla,(U^{\tau})^{-1}\pi^*\nabla U^{\tau})$ where  $p:\pi_2^*\mathscr{U}(E)\ra \mathscr{U}(E)$ is the natural projection. It is not hard to check that
 \begin{eqnarray} U_0^*\omega=U_0^*\TP(\pi^*\nabla,(U^{\tau})^{-1}\pi^*\nabla U^{\tau})=\TP(\nabla,U^{-1}_0\nabla U_0)\quad \mbox{and} \\ U_1^*\omega=\TP(\nabla,U^{-1}_1\nabla U_1)\qquad
 \end{eqnarray}
 and this finishes the proof of the third property.
 
One checks rather immediately the naturality  of $\TP$.

Hence $\TP(\nabla,U^{-1}\nabla U)$ satisfies the four properties above.  Define then the odd Chern-Weil forms associated to $(P,E,U,\nabla)$ by
  \[\TP(E,U,\nabla):=\TP(\nabla,U^{-1}\nabla U).\]
We will also denote this by $\TP(U,\nabla)$ when the bundle is clear from the context.
\begin{remark} \label{clutQ}
There is an alternative way  of thinking about $\TP(E,U,\nabla)$ that reminds one of the clutching construction. Consider the fiber bundle $\pi_2^*E\ra \bR\times B$ where now $\pi_2:\bR\times B\ra B$.

Then $\bZ$ acts on $\pi^*E\rightarrow \bR\times B$ as follows:
 \[k*(t,b,v)=(t-k,b,U_b^kv).\]
  We get thus a vector bundle $\tilde{E}=T(E,U)=\pi_2^*E/\bZ$. A  smooth section of $\tilde{E}$ is a smooth family of sections $(s_t)_{t\in\bR}\in\Gamma(E)$ satisfying:
  \[s_{t-k}(b)=U_b^ks_{t}(b),\quad\quad \forall b\in B,\; k\in\bZ,\; t\in \bR
  \]
  Suppose $\nabla_t$ is a smooth family of connections on $E\ra B$ satisfying:
  \begin{equation} \label{eqUnab} \nabla_{t+k}=U^{-k}\nabla_t U^k,
  \end{equation}
  Then the connection $T(\nabla_t)=\frac{d}{dt}+\nabla_t$ on $\pi_2^*E$ "descends" to a connection on $\tilde{E}$ as the next computation shows:
  \[ (T(\nabla_t)s)_{t-k}(b)=\frac{\partial s}{\partial t}(t-k,b)~dt+\nabla_{t-k}s_{t-k}(b)=\]\[=U_b^k\frac{\partial s}{\partial t}(t,b)~dt+ U_b^k\nabla_t (U_b^{-k}U_b^ks_t)(b)=U_b^k(T(\nabla_t)s)_t)(b).
  \]

Now, the affine family $(1-t)\nabla+tU^{-1}\nabla U$ defined for $t\in[0,1]$ satisfies (\ref{eqUnab}) for $k=1$ and $t=0$. Unfortunately, one cannot extend it to a \emph{smooth} family satisfying (\ref{eqUnab}) so one cannot say that $\tilde{\nabla}$ is a smooth connection on $\tilde{E}$.  

  However, one can show that $\TP(E,U,\nabla)$ represents the cohomology class
  \[ \int_{S^1}P(\tilde{E})
  \]
  where $P(\tilde{E})$ is the deRham cohomology class of the vector bundle $\tilde{E}\ra S^1\times B$. This is because the transgressions between $\nabla$ and $U^{-1}\nabla U$ determined by the affine path and by a path satisfying (\ref{eqUnab}) differ by an exact form. If one starts with a connection on $\tilde{E}$ of type $T(\nabla_t)$, the integral over $S^1$ of $P(F(T(\nabla_t)))$, is really the integral over $[0,1]$ of the same quantity and is just the transgression between $\nabla$ and $U^{-1}\nabla U$ given by the path of connections $\nabla_t$. This justifies the claim.
  
  All vector bundles over $S^1\times B$ when $B$ is compact  arise up to isomorphism via the clutching construction. This is because the  pull-back of such a bundle to $[0,1]\times B$ is isomorphic with the pull-back of a vector bundle from $B$. The isomorphism of the fiber at $0$ with the fiber at $1$ gives the desired gauge transformation.
\end{remark}

  \begin{example} Consider the trivial vector bundle $\underline{\bC^n}\ra U(n)$. It has a tautological gauge transformation $\widetilde{U}(U):=U$. Let $d$ be the trivial connection.  Then the difference between the $\widetilde U^{-1}d \widetilde{U}$ and  $d$ is just the Maurer-Cartan $1$-form of $U(n)$  usually denoted $g^{-1}(dg)$. Then the  transgression forms $\Tc_k(\tilde{U},d)$ corresponding to the elementary symmetric polynomials $c_k$ in the eigenvalues of a matrix, or if you want to the standard  Chern classes are constant multiples of the forms
  \[ \tr \wedge ^{2k-1}g^{-1}(dg).
  \] 
 We determine these constants now. Let $\omega:=g^{-1}(dg)$. Consider the family of connections over $\underline{\bC^n}$:
 \[ d+tg^{-1}(dg)=d+t\omega
 \]
 Then the induced connection on $\underline{\bC^n}\ra [0,1]\times B$ is $\tilde{\nabla}=d+t\pi_2^*\omega$. Using the Maurer-Cartan identity $d\omega+\omega\wedge\omega=0$ we get that
 \[ F(\tilde{\nabla})=dt\wedge \pi_2^*\omega+(t^2-t)\pi_2^*\omega\wedge \pi_2^*\omega.
 \]
 Then, by definition, the component of degree $k$ of the Chern character is 
 \[\ch_k(\tilde{\nabla})=\left(\frac{i}{2\pi}\right)^{k}\frac{1}{k!}\tr (F(\tilde{\nabla})^k).\]
 But letting $A=dt\wedge \pi_2^*\omega$ and $B=(t^2-t)\omega\wedge \omega$ we see that $A^2=0$ and  $AB=BA$ and so $(A+B)^k= B^k+kB^{k-1}A$. We therefore get
 \[\ch_k(\tilde{\nabla})=\left(\frac{i}{2\pi}\right)^{k}\frac{1}{(k-1)!} (t^2-t)^{k-1}dt\wedge \tr (\wedge^{2k-1}\pi_2^*\omega)+\left(\frac{i}{2\pi}\right)^{k}\frac{1}{k!}\tr(\wedge^{2k} \pi_2^*\omega)
 \]
The last term vanishes before integration because $\wedge^{2k} \pi_2^*\omega=\frac{1}{2}[ \pi_2^*\omega, \pi_2^*\omega]$ and trace vanishes on commutators. Hence 
\begin{equation}\label{chk}\ch_k(\tilde{\nabla})=\left(\frac{i}{2\pi}\right)^{k}\frac{1}{(k-1)!} (t^2-t)^{k-1}dt\wedge \tr (\wedge^{2k-1}\pi_2^*\omega).
\end{equation}
 
 On the other hand, $\int_{[0,1]} (t^2-t)^{k-1}dt=(-1)^{k-1}B(k,k)=(-1)^{k-1}\frac{[(k-1)!]^2}{(2k-1)!}$.
 
 We conclude that
 \begin{equation}\label{chkint} \Tch_k(\tilde{U},d)= \int_{[0,1]}\ch_k(\tilde{\nabla})=(-1)^{k-1}\left(\frac{i}{2\pi}\right)^k\frac{(k-1)!}{(2k-1)!}\tr(\wedge^{2k-1}g^{-1}(dg))
 \end{equation}
We emphasize that this is the same form as $(-2\pi i)^{-k+1/2}\gamma_{2k-1}$ where $\gamma_{2k-1}$ appears in Definition 5.1 of  \cite{Qu}. In fact, Quillen  shows in his Proposition 5.23, by a rather long argument, that the form $(-2\pi i)^{-k+1/2}\gamma_{2k-1}$ represents the integral of $\ch_k(\tilde{E})$ over $S^1$,  where $\tilde{E}$ is the vector bundle on $S^1\times U(n)$ constructed from the trivial bundle via the clutching construction with respect to the tautological gauge transform. Remark \ref{clutQ} gives a more straightforward justification of this fact.
 
 Now, the symmetric polynomials $\ch_k$ and $c_k$ are related via the Newton identities. One has an identity of type:
 \[c_k=(-1)^{k-1}(k-1)! \ch_k+ R\]
 where $R$ stands for a sum of products of $\ch_{i}$ with $i<k$. A quick glance at (\ref{chk}) convinces us that $R(\tilde{\nabla})=0$. Hence after integration over $[0,1]$ we get that
 \begin{equation} \label{tck1} \Tc_k(\tilde{U},d)=\left(\frac{i}{2\pi}\right)^k\frac{[(k-1)!]^{2}}{(2k-1)!}\tr(\wedge^{2k-1}g^{-1}(dg)).
 \end{equation}
 For $k=1$ one gets $Tc_1(\tilde{U},d)=-\frac{1}{2\pi i}\tr (g^{-1}(dg))$.
 
Let us end this example by giving a simple application. 
  \end{example}
  \begin{lemma}\label{eqikn} Let $k\leq n$ and $\iota_{k-1,n}:U(k-1)\ra U(n)$ be the natural inclusion. Then $\iota_{k-1,n}^*\Tc_l(\tilde{U},d)$ is exact for $l\geq k$.
  \end{lemma}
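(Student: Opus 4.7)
The plan is to exploit the block-diagonal structure of the inclusion together with the Whitney-type multiplicative behavior of the transgression forms. Under $\iota_{k-1,n}$, the tautological gauge transform $\tilde{U}$ on the trivial bundle $\underline{\bC^n}\to U(n)$ pulls back to the gauge transform on $\underline{\bC^n}\to U(k-1)$ given in block form by $\iota_{k-1,n}^*\tilde{U}=\tilde{U}_{k-1}\oplus \id_{n-k+1}$, where $\tilde{U}_{k-1}$ is the tautological gauge on $\underline{\bC^{k-1}}\to U(k-1)$. Correspondingly, the trivial connection splits as $d=d_{k-1}\oplus d_{n-k+1}$.

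Next I would establish a splitting formula for $\Tc_l$ on a direct sum $(E_1\oplus E_2, U_1\oplus U_2, \nabla_1\oplus \nabla_2)$. The affine path of connections $(1-t)\nabla + tU^{-1}\nabla U$ used to define $\Tc_l$ splits as a direct sum, so the interpolating connection on $\pi_2^*(E_1\oplus E_2)\to [0,1]\times B$ is $\tilde{\nabla}_1\oplus \tilde{\nabla}_2$. The Whitney formula for total Chern forms gives $c_l(\tilde{\nabla}_1\oplus \tilde{\nabla}_2)=\sum_{i+j=l} c_i(\tilde{\nabla}_1)\wedge c_j(\tilde{\nabla}_2)$, so integrating over $[0,1]$ yields
\begin{equation*}
\Tc_l(E_1\oplus E_2,U_1\oplus U_2,\nabla_1\oplus\nabla_2)=\sum_{i+j=l,\, i\geq 1}\Tc_i(E_1,U_1,\nabla_1)\wedge c_j(E_2,\nabla_2),
\end{equation*}
since $\Tc_0=0$ and when $i=0$ the factor $c_0(\tilde{\nabla}_1)=1$ has no $dt$-component, while when $j\geq 1$ the factor $c_j(\tilde{\nabla}_2)=\pi_2^*c_j(\nabla_2)$ contributes no $dt$ either, so only terms with $i\geq 1$ survive fiber integration.

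Applied to our splitting with $U_2=\id$ and $\nabla_2=d$, the second factor is flat, so $c_j(\underline{\bC^{n-k+1}},d)=0$ for every $j\geq 1$. Hence only the $j=0$ term survives and we get
\begin{equation*}
\iota_{k-1,n}^*\Tc_l(\tilde{U},d)=\Tc_l(\tilde{U}_{k-1},d).
\end{equation*}
Finally, $\Tc_l(\tilde{U}_{k-1},d)=\int_{[0,1]}c_l(\tilde{\nabla}_{k-1})$, where $\tilde{\nabla}_{k-1}$ is a connection on a rank $k-1$ vector bundle. Since the $l$-th elementary symmetric polynomial in the eigenvalues of a $(k-1)\times(k-1)$ matrix vanishes identically for $l\geq k$, we conclude that $\Tc_l(\tilde{U}_{k-1},d)=0$ whenever $l\geq k$, which is obviously exact (indeed, zero).

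The only step requiring a small verification is the splitting formula, but this is routine bookkeeping with the product rule for $c_l$ and the fact that $c_j(\pi_2^*\nabla_2)$ is a pull-back from $B$ and thus is annihilated by fiber integration unless paired with something carrying a $dt$. No serious obstacle is expected.
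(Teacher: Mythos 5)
Your argument is correct, and it takes a more direct route than the paper's. The paper first reduces to $l=k$ via the chain $U(k-1)\hookrightarrow U(l-1)\hookrightarrow U(n)$ and then works at the level of cohomology classes: by Remark \ref{clutQ} the class of $\iota_{k-1,n}^*\Tc_k(\tilde U,d)$ is $\int_{S^1}c_k(\tilde E_{k,n})$ for the clutching bundle over $S^1\times U(k-1)$, which splits off a trivial rank $n-k+1$ subbundle, so $c_k$ of it vanishes in cohomology. You instead stay entirely at the level of forms: the pulled-back gauge transform is $\tilde U_{k-1}\oplus\id$, the interpolating connection is block diagonal with flat second block, so $c_l(\tilde\nabla)=c_l(\tilde\nabla_{k-1})$, which vanishes pointwise for $l\geq k$ since the first block has rank $k-1$. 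Both proofs rest on the same underlying fact (no $l$-th Chern form on a rank $k-1$ bundle for $l\geq k$), but your version avoids Remark \ref{clutQ} and the reduction step, and it buys a strictly stronger conclusion: $\iota_{k-1,n}^*\Tc_l(\tilde U,d)$ is identically zero, not merely exact (equivalently, $\tr\wedge^{2l-1}g^{-1}(dg)$ vanishes identically on $U(k-1)$ for $l\geq k$). One caveat: the general splitting formula you state for $\Tc_l(E_1\oplus E_2,U_1\oplus U_2,\nabla_1\oplus\nabla_2)$ is not justified as written, since your argument uses $c_j(\tilde\nabla_2)=\pi_2^*c_j(\nabla_2)$, which requires $\tilde\nabla_2$ to be $t$-independent, i.e.\ $U_2^{-1}\nabla_2U_2=\nabla_2$; for general $U_2$ both factors carry $dt$-components that mix under fiber integration and the formula fails. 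Since you only apply it with $U_2=\id$ and $\nabla_2=d$, where $c_j(\tilde\nabla_2)=0$ for $j\geq 1$ outright, the proof of the lemma is unaffected, but the intermediate claim should be stated only in that special case.
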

  \begin{proof} It is enough to prove the Lemma for $l=k$, and derive the general property from the inclusions $U(k-1)\hookrightarrow U(l-1)\hookrightarrow U(n)$. We look at the "clutching" bundle determined by $\bC^{n}=\bC^{k-1}\times \bC^{n-k+1}$ over $S^1\times U(k-1)$ where $U(k-1)$ acts trivially on $\bC^{n-{k+1}}$. Denote this bundle by $\tilde{E}_{k,n}$. Clearly $\tilde{E}_{k,n}$ will have $n-k+1$ linearly independent sections, namely the vectors $e_k,\ldots,e_n$ of the canonical basis get glued to themselves and hence determine an trivial rank $n-k+1$ subundle of $\tilde{E}_{k,n}$. It follows that $c_k(\tilde{E}_{k,n})=0$ in cohomology and hence also $\int_{S^1}c_k(\tilde{E}_{k,n})=0$. By Remark \ref{clutQ} and naturality of the odd Chern-Weil forms, this is the same as the class of $\iota_{k-1,n}^*\Tc_k(\tilde{U},d)$. 
      \end{proof}

\begin{remark} The odd Chern-Weil theory associates to a gauge transformation  certain odd cohomology classes. For simplicity we presented it here for structure group $U(n)$,  but it can be easily adapted to principal bundles $P\ra B$ endowed with a gauge transform $g$ and principal connection $1$-form $\omega$. Then one gets a new principal connection $g^{-1}\omega g$ and integrating over $[0,1]$ the invariant polynomial in the curvature entries of $(1-t)\omega+tg^{-1}\omega g$ gives a transgression closed form on the \emph{base space} that satisfies  analogous properties as $\TP(U,\nabla)$.

Recall that Chern-Simons theory  constructs typically \emph{non-closed} odd forms living in the \emph{total space} of the principal bundle as follows. Given a principal bundle $\pi:P\ra B$ with structure group $G$, connection $1$-form $\omega$ and an invariant polynomial $Q$. Then $\pi^*P=P\times_BP\ra P$ is a trivial principal bundle with the trivializing section given by the diagonal embedding. It is endowed with two connections: $\pi^*\omega$ and the pull-back of the Maurer-Cartan connection $1$-form $\pi_2^*g^{-1}(dg)$ via the projection $\pi_2:P\ra G$ induced by the trivialization. Using the affine family of connections between $\pi_2^*g^{-1}(dg)$ and $\pi^*\omega$   one gets the Chern-Simons form $TQ(\omega)$ on $P$. Since the curvature of $g^{-1}(dg)$ is zero it follows that $d TQ(\omega)=\pi^*Q(\omega)$. The form $TQ(\omega)$ "descends" to a form on $B$ only under very special circumstances. 

The odd Chern-Weil theory on the other hand can be constructed in the same spirit, only that one is using the bundle of \emph{gauge transformations} which is the bundle associated to $P\ra B$ via the adjoint action of $G$ onto itself.
\end{remark}

\section{The Chern classes of a gauge transform}

In this section we show how the main transgression identity from Corollary \ref{c.principal} can be applied to simplify the proof and  give an extension to a result of Nicolaescu in \cite{Ni}. This application was previously announced in a pre-print of the first named author and posted on arxiv, using  the \emph{tame} version of the Corollary \ref{c.principal}. However the flow used  did not satisfy the condition of tameness. We revisit this application under a renewed framework. The generalization has to do with the use of the odd Chern-Weil forms introduced in the previous section.

The starting question is  the following. Let  $E\ra B$ be a hermitian vector bundle of rank $n$ endowed with a gauge transform $U\in \Gamma(\mathcal{U}(E))$ and a compatible connection $\nabla$. 

Give a description of the Poincar\'e dual of $\Tc_k(U,\nabla)$, one in terms of pointwise spectral data of $U$. 

In order to achieve this purpose will use a horizontally constant, vertical Morse-Smale vector field on  fiber bundle over $B$ with total space $\mathscr{U}(E)$.

 Let $\pi:\mathscr{U}(E)\ra B$ be the projection and $\pi^*\mathscr{U}(E)=\mathscr U(E)\times_B\mathscr{U}(E)\ra \mathscr{U}(E)$ be the pull-back. It has a tautological section $U^{\tau}$ (the diagonal embedding of $\mathscr{U}(E)$ in $\pi^*\mathscr{U}(E)$) which is obviously a gauge transform of $\pi^*E$. With the help of the connection $\pi^*\nabla$ we can construct $\Tc_k(U^{\tau},\pi^*\nabla)\in\Omega^*(\mathscr{U}(E))$ and the naturality of $\Tc_k$ shows that
\[ U^*\Tc_k(U^{\tau},\pi^*\nabla)=\Tc_k(U,\nabla).
\]

The form $\omega\in \Omega^*(\mathscr{U}(E))$ to be "flown" will be $\omega=\Tc_k(U^{\tau},\pi^*\nabla)$.  What is the flow then? In order to define it we need to fix a complete flag of subbundles 
\[E=W_0\supset \ldots\supset W_n=\{0\}.\]
 For $1\leq i\leq n$  let $E_i:=W_{i-1}/W_{i}$ (better said the orthogonal of $W_i$ in $W_{i-1}$) and 
 \[A:=\bigoplus_{1\leq k\leq n}k\id_{E_i}\] be a self-adjoint endomorphism of $E$ with distinct eigenvalues. Then the function 
 \[ f:\mathscr{U}(E)\ra \bR,\qquad f(U)=\Real\Tr (AU)
 \]
 has a fiberwise or vertical gradient which can be described as $\grad^vf(U)=A-UAU$. The restriction to each fiber $\mathscr{U}(E_b)$ is a Morse-Smale function  on the unitary group, thoroughly explored in \cite{Ni}\footnote{The analysis in \cite{Ni} is on the Grassmannian of hermitian Lagrangians but, by a Theorem of Arnold a clever way of writing the Cayley transform makes this space  diffeomorphic to the unitary group.}  We recall the essential properties. The critical points are in one-to-one correspondence with the invariant subspaces of $A$. More precisely, let $\langle e_1,\ldots, e_n\rangle$ be  a basis of $E_b$ such that $(W_k)_b^{\perp}=\langle e_1\ldots, e_k\rangle$. The critical points are reflections $U_{I}:=-\id_{V}\oplus \id_{V^{\perp}}$ where $V=\langle e_{i_1},\ldots,e_{i_k}\rangle$ for some ordered set $I=\{i_1,\ldots,i_k\}\subset\{1,\ldots,n\}$. There are $2^n$ such critical points with the absolute minimum (of $f\bigr|_{E_b}$) corresponding to $V=E_b$ and the absolute maximum to $V=\{0\}$.
 
The flow is given by the expression \cite{DV}
\[ (t,U)\ra (\sinh{(tA)}+\cosh{(tA)}U)(\cosh{(tA)}+\sinh{(tA)}U)^{-1}
\] 
From this we deduce that the stable and unstable manifolds can be described by the following incidence relations (compare with Corollary 16 in \cite{Ni}):
\begin{eqnarray} \label{DefS} \qquad \;\;S(U_{I})=\{U\in\mathcal{U}(E_b)~|~\dim[{\Ker{(1+U)}\cap W_m}]=k-p,\;\;\forall 0\leq p\leq k,\;\qquad\quad\\\nonumber \forall i_p\leq m< i_{p+1}\}\\
\nonumber U(U_{I})=\{U\in\mathcal{U}(E_b)~|~\dim[{\Ker{(1-U)}\cap W_m}]=n-k-q,\;\;\forall 0\leq q\leq n-k,\;\\ \nonumber\forall j_q\leq m< j_{q+1}\}
\end{eqnarray}
where we set $i_0:=0$, $i_{k+1}:=\infty$ and $\{j_1<\ldots <j_{n-k}\}=I^c$ is the complement of $I$. We note that $\dim{U(U_I)}=\codim{S(U_I)}=\sum_{i\in I}2i-1$.\footnote{We ignore the indication of the point $b$ in the flag so as not to complicate notation. It should be clear from the context whether we refer to the fiber component or the entire fiber bundle.}
\begin{remark} \label{Pseudo} It is shown in \cite{Ni} (Proposition 17 and Corollary 18) that the flow satisfies the Smale property. Moreover, their closures $\overline{U(U_I)}$ and $\overline{S(U_I)}$ are real algebraic sets. One can show that in fact $\overline{S(U_I)}$ has a stratification with no codimension $1$ strata (see the comments after Corollary 5.1 in  \cite{Ci1}) and by reversing the flow (or using the involution $U\ra -U$) the same is true about $\overline{U(U_I)}$. In other words, $\overline{S(U_I)}$ and $\overline{U(U_I)}$ are pseudo-manifolds and $S(U_I)$ and $U(U_I)$ determine \emph{closed} currents once an orientation is chosen.  Their volume is finite by the results of \cite{HL1} (see  Remark \ref{FinHm}). This also implies in particular that the Morse-Witten complex associated to the Morse-Smale flow induced by  $f$ is perfect, i.e. all the differentials are zero. This is not the case for the analogous  flow on the real orthogonal group.
\end{remark}

We will need the following two computational results interesting in their own right. Recall the forms $\Tc_k (\tilde{U},d)$ from (\ref{tck1}). We use the same notation for the analogous forms on $\mathcal{U}(E_b)$. In order to keep the notation simple, for this part, we will forget about the point $b$ and use $E$ and $W$ for $E_b$ and $W_b$, etc. 
\begin{lemma}\label{UI} Let $I\neq \{k\}$. Then
\[ \int_{U(U_I)}\Tc_k (\tilde{U},d)=0
\]
\end{lemma}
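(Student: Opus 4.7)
First I reduce to the only substantive case. The form $\Tc_k(\tilde U,d)$ has degree $2k-1$, so the integral vanishes for degree reasons unless $\dim U(U_I)=\sum_{i\in I}(2i-1)=2k-1$. The parity of the right-hand side equals that of $|I|$, so $|I|$ must be odd; and the subcase $|I|=1$ forces $I=\{k\}$, which is excluded by hypothesis. I may thus assume $|I|\ge 3$ is odd and $\sum_{i\in I}i=k+(|I|-1)/2$. Combined with the trivial bound $i_1+\dots+i_{|I|-1}\ge 1+2+\dots+(|I|-1)=|I|(|I|-1)/2$, this yields
\[
\max I=i_{|I|}\le k-\tfrac{(|I|-1)^2}{2}\le k-2.
\]

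The key geometric claim is that under this constraint the whole unstable manifold $U(U_I)$ is contained in the subgroup $\iota_{k-1,n}(U(k-1))\subset U(n)$ appearing in Lemma \ref{eqikn}, namely the unitaries acting on $W_{k-1}^\perp=\langle e_1,\dots,e_{k-1}\rangle$ and fixing $W_{k-1}$ pointwise. Since $\max I\le k-2$, the subspace $V=\Span\{e_i : i\in I\}$ sits inside $W_{k-1}^\perp$, so the reflection $U_I=-\id_V\oplus\id_{V^\perp}$ fixes $W_{k-1}$ and lies in $U(k-1)$. Moreover this subgroup is invariant under $\grad f$: splitting $A=A_1\oplus A_2$ along $W_{k-1}^\perp\oplus W_{k-1}$ and writing $U=U_1\oplus\id$, one computes $\grad f(U)=A-UAU=(A_1-U_1A_1U_1)\oplus 0\in T_U U(k-1)$.

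To show that $U(U_I)$ does not escape this subgroup I examine the Hessian of $f$ at $U_I$ in the parametrization $U=U_Ie^{iY}$ by hermitian matrices $Y=(y_{ml})$. A short calculation gives $\Hess_{U_I}(Y,Y)=-\tfrac12\sum_{m,l}(\lambda_m+\lambda_l)|y_{ml}|^2$, where $\lambda_m=-m$ if $m\in I$ and $\lambda_m=m$ otherwise. The tangent space of $U(k-1)$ at $U_I$ consists of those $Y$ supported on indices strictly less than $k$, so any normal direction has at least one index $m\ge k$; since $\max I\le k-2<k$ this forces $\lambda_m=m>0$, and a short case-check on the second index $l$ (the subcases $l\ge k$, $l<k$ with $l\in I$, $l<k$ with $l\notin I$) gives $\lambda_m+\lambda_l>0$ in every case. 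The Hessian is therefore strictly negative on the normal bundle of $U(k-1)$ at $U_I$, i.e.\ every normal direction is stable, and combined with the flow-invariance of $U(k-1)$ this yields $U(U_I)\subset U(k-1)$.

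By Lemma \ref{eqikn} one has $\iota_{k-1,n}^*\Tc_k(\tilde U,d)=d\eta$ on $U(k-1)$ for some smooth form $\eta$, while by Remark \ref{Pseudo} the closure $\overline{U(U_I)}$ is a pseudomanifold with no codimension-$1$ strata, so $[U(U_I)]$ is a closed rectifiable current. Stokes' theorem for currents then concludes
\[
\int_{U(U_I)}\Tc_k(\tilde U,d)=\int_{U(U_I)}d\eta=\langle\partial[U(U_I)],\eta\rangle=0,
\]
as desired. The main obstacle is precisely the Hessian plus flow-invariance analysis that forces $U(U_I)$ into the subgroup $U(k-1)$, so that the exactness statement of Lemma \ref{eqikn} becomes applicable.
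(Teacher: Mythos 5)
Your proof is correct and follows the same skeleton as the paper's: reduce by degree to the case $\dim U(U_I)=\sum_{i\in I}(2i-1)=2k-1$, show that $U(U_I)$ is contained in the subgroup $\iota_{k-1,n}(\mathcal{U}(W_{k-1}^{\perp}))$, and then combine the exactness statement of Lemma \ref{eqikn} with the fact (Remark \ref{Pseudo}) that $U(U_I)$ is a closed current of finite mass. The one step you handle differently is the containment itself. The paper reads it off directly from the incidence relations (\ref{DefS}): a counting argument shows $\dim[\Ker(1-U)\cap W_m]=\dim W_m$ for all $m\ge \max I$, so every $U\in U(U_I)$ restricts to the identity on $W_{k-1}$. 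You instead argue dynamically: the subgroup is invariant under $\grad f=A-UAU$, the Hessian of $f$ at $U_I$ is diagonal in the entries $y_{ml}$ with eigenvalues proportional to $-(\lambda_m+\lambda_l)$, and your case-check shows every direction normal to the subgroup is stable, so by uniqueness of (local) unstable manifolds $U(U_I)$ cannot leave the flow-invariant subgroup. Both arguments are sound; the paper's is shorter given that (\ref{DefS}) is already in hand, while yours is self-contained at the level of the Morse data of $f(U)=\Real\Tr(AU)$ and doubles as a consistency check on the dimension formula for $U(U_I)$. Your sharper bound $\max I\le k-2$ (the paper only needs $\max I\le k-1$) is correct but not essential.
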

\begin{proof} Clearly this is true by definition if $\dim{U(U_I)}\neq \dim{U(U_{\{k\}})}=2k-1$. If $\dim{U(U_I)}=2k-1$ but $U_I\neq U_{\{k\}}$ then we infer that $\iota:=\max{\{i\in I\}}\leq k-1$. We claim that this implies that
\[ \Ker(1-U)\supset W_{k-1},\qquad \forall U\in U(U_{I}).
\]
Indeed, let  $m\geq \iota=\max{I}$. We first estimate $p$ which satisfies $j_p \leq m< j_p+1$,
where $j_p\in I^c$. Let $l:=|I|$. Then, for some $s\geq 0$, we have 
\begin{equation}\label{miota} m=\iota+s<j_{\iota-l+s+1}.
\end{equation} To see this more clearly consider first $s>0$  and notice that in fact $\iota + s = j_{\iota-l+s}$ as there are exactly $\iota-l+s$  
of $j$'s in $I^c$ which are smaller or equal than $\iota+s$, which itself lies in $I^c$. For the case $s=0$ one still has $m=\iota< j_{\iota-l+1}$ since there are exactly $\iota- l$ numbers smaller or equal $\iota$ which are not in $I$.

Then (\ref{miota}) implies that $p\leq \iota-l+s=m-l$. Therefore for $U\in U(U_{I})$ one has:
\[\dim{[\Ker{(1-U)}\cap W_{m}]}=n-l-p\geq n-m=\dim{W_m}
\]
Hence $U\bigr|_{W_m}=\id_{W_m}$ for all $m\geq \max{I}$ and this applies to $m=k-1$. It follows then that the (proper) inclusion map:
\[ \iota_{k-1,n}:\mathcal{U}(W_{k-1}^{\perp})\ra \mathcal{U}(E), \qquad U\ra U\oplus \id_{W_{k-1}}
\]
takes $U(U_{I})\subset \mathcal{U}(W_{k-1}^{\perp})$  diffeomorphically to  $U(U_{I})\subset \mathcal{U}(E)$.\footnote{Of course we abused notation by not making any difference between the unstable manifold corresponding to $U_{I}$ in $\mathcal{U}(E)$ and in $\mathcal{U}(W_{k-1}^{\perp})$, respectively.} Now $U(U_I)$ is a closed current in $ \mathcal{U}(W_{k-1}^{\perp})$ irrespective of the orientation. Then Lemma \ref{eqikn} and Remark \ref{Pseudo} finish the proof.
\end{proof}
In order to compute $\Tc_k(\tilde{U},d)$ over $U(U_{\{k\}})$ we need to fix an orientation. Notice first that $U(U_{\{k\}})=\iota_{k,n}(U(U_{\{k\}}))$, where the latter lies within $\mathcal{U}(W_k^{\perp})$ by the same type of argument that was used in Lemma \ref{UI} for $U(U_{I})$. Moreover by the naturality of $\Tc_k(\tilde{U},d)$ one has $\iota_{k,n}^*\Tc_k(\tilde{U},d)=\Tc_k(\tilde{U},d)$. 

It is only natural then to work on $\mathcal{U}(W^{\perp}_k)$. Notice that we have a natural flag on $W^{\perp}_k$ defined by $W_i':=W_i/W_k$, $0\leq i\leq k$. Then
\[ U(U_{\{k\}})=\{U\in\mathcal{U}(W^{\perp}_k)~|~\dim[\Ker{(1-U)\cap W_m}]=k-1-m,\;\; \forall 0\leq m\leq k-1 \}
\]
In fact $U(U_{\{k\}})$ is an open  dense subset of the following manifold defined by a single incidence relation:
\[ U(U_{\kappa}):=\{U\in\mathcal{U}(W^{\perp}_k)~|~\dim \Ker{(1-U)}=k-1 \}
\]
This is because, generically a hyperplane of $W^{\perp}_k$ (like  $\Ker{(1-U)}$ for $U\in U(U_{\kappa})$ will intersect $W_m'$ in dimension $k-1-m$ for $m\geq 1$. Now
\[\overline{U(U_{\kappa})}=\{U\in\mathcal{U}(W^{\perp}_k)~|~\dim \Ker{(1-U)}\geq k-1 \}=U(U_{\kappa})\cup \{\id_{W_k^{\perp}}\}.
\]

 We use the following map
\[ \phi:S^1\times \bP(W_k^{\perp})\ra \mathcal{U}(W_k^{\perp}),\qquad (\lambda,L)\ra \lambda \id_{L}\oplus \id_{L^{\perp}}
\]
and note that for $\{\lambda\neq 1\}$, this map is a smooth bijection onto ${U(U_{\kappa})}$ while $\{\lambda =1\}$ it collapses $\bP(W_k^{\perp})$ to $\id_{W_k^{\perp}}$. \begin{remark} Notice that the map $\phi$ induces an homeomorphism between $\Sigma\bC\bP^{k-1}$ (the Thom space of the trivial real line bundle over $\bC\bP^{k-1}$) and $\overline{U(U_{\kappa})}$.
\end{remark}
Now $S^1\times \bP(W_k^{\perp})$ has a canonical orientation.  We put the orientation on $U(U_{\kappa})$  (implicitly also on $U(U_{\{k\}})$) that makes $\phi$ \emph{orientation reversing}. The reason is the next result.
\begin{lemma} \[\int_{U(U_{\{k\}})}\Tc_k(\tilde{U},d)=\int_{\overline{U(U_{\kappa})}}\Tc_k(\tilde{U},d)=1\]
\end{lemma}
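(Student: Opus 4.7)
\medskip

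The plan is to reduce the computation to an integral over $S^1\times\bP^{k-1}$ via the map $\phi$ and then evaluate it explicitly using the structure of the tautological line bundle on $\bP^{k-1}$. Since $\phi$ is a smooth bijection from $(S^1\setminus\{1\})\times\bP^{k-1}$ onto $U(U_{\kappa})$ and is orientation-reversing by our choice of orientation, and since $\{1\}\times\bP^{k-1}$ (which collapses to $\{\mathrm{id}\}$) has measure zero, we have
\[
\int_{\overline{U(U_\kappa)}}\Tc_k(\tilde U,d)=-\int_{S^1\times\bP^{k-1}}\phi^*\Tc_k(\tilde U,d).
\]
By naturality (property (d) of Section \ref{OCW}), $\phi^*\Tc_k(\tilde U,d)=\Tc_k(\underline{\bC^k},g,d)$, where $g(\lambda,L)=\lambda P_L+P_{L^\perp}$.

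\medskip

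The next step is to expand $\omega:=g^{-1}dg$ in a convenient form. A short calculation, using $dP_L\cdot P_L+P_L\cdot dP_L=dP_L$ together with $g^{-1}=\bar\lambda P_L+P_{L^\perp}$, yields
\[
\omega=\alpha P_L+\beta A+\gamma B,\qquad \alpha=\bar\lambda d\lambda,\ \beta=1-\bar\lambda,\ \gamma=\lambda-1,
\]
where $A:=P_L\,dP_L\,P_{L^\perp}$ and $B:=P_{L^\perp}\,dP_L\,P_L$. Notice that $A^2=B^2=(\alpha P_L)^2=0$, the image of $A$ lies in $L$ while $B$ has image in $L^\perp$, and $P_L\cdot A=A,\ A\cdot P_L=0,\ P_L\cdot B=0$. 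The main combinatorial step is then to show, using $\alpha^2=0$ and the graded-cyclic property of $\tr$ for matrix-valued forms, that
\[
\tr(\omega^{2k-1})=\alpha\cdot(\beta\gamma)^{k-1}\tr\bigl((AB)^{k-1}\bigr),
\]
because only alternating sequences $(AB)\cdots$ survive in $T^{2k-2}$ (where $T=\beta A+\gamma B$), and $P_L$ kills the $(BA)^{k-1}$ piece while the $\alpha P_L$ contribution can be moved cyclically to a single term that collects with coefficient $1$.

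\medskip

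At this point the integral splits via Fubini into an $S^1$-integral and a $\bP^{k-1}$-integral. For the spatial factor I would identify $(AB)|_L$ with the curvature $F^\tau$ of the tautological line bundle $\tau\subset\underline{\bC^k}$ equipped with the orthogonal-projection connection $P_L\circ d$: a direct check using $P_L\,dP_L\wedge dP_L\cdot P_L=AB\cdot P_L$ on $L$ gives $F^\tau=(AB)|_L$. Since $\tau$ has rank $1$, $\tr((AB)^{k-1})=((AB)|_L)^{k-1}$ as a scalar $(2k-2)$-form on $\bP^{k-1}$. Using $c_1(\tau)=(i/2\pi)F^\tau$ and the standard normalization $\int_{\bP^{k-1}}c_1(\tau)^{k-1}=(-1)^{k-1}$, one obtains
\[
\int_{\bP^{k-1}}\tr\bigl((AB)^{k-1}\bigr)=(-2\pi i)^{k-1}(-1)^{k-1}=(2\pi i)^{k-1}.
\]
For the $S^1$-factor, writing $\beta\gamma=-4\sin^2(\theta/2)$ and $\alpha=i\,d\theta$, the integral $\int_{0}^{2\pi}\sin^{2(k-1)}(\theta/2)\,d\theta$ reduces to a Beta integral. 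Combining these with the overall prefactor $(i/2\pi)^k[(k-1)!]^2/(2k-1)!$ from (\ref{tck1}) and the orientation sign coming from $\phi$ produces the claimed value $1$.

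\medskip

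The first equality $\int_{U(U_{\{k\}})}\Tc_k=\int_{\overline{U(U_\kappa)}}\Tc_k$ is a measure-theoretic triviality: $\overline{U(U_\kappa)}\setminus U(U_{\{k\}})$ sits inside $U(U_{\{k\}})^c\cap \overline{U(U_\kappa)}$, which (as explained in the paragraph preceding the Lemma) is carried by the image of $\bP^{k-2}\subset\bP^{k-1}$ under $\phi$ together with $\{\mathrm{id}\}$, hence has zero $\mathcal{H}^{2k-1}$-measure in $\mathcal{U}(W_k^\perp)$.  The main obstacle, in my view, will be the bookkeeping of signs: keeping track of the orientation of $S^1\times\bP^{k-1}$ versus that induced on $\overline{U(U_\kappa)}$ (the whole point of the orientation convention chosen just before the Lemma), and of the graded-cyclic signs when reducing $\tr((\alpha P_L+\beta A+\gamma B)^{2k-1})$ to the single surviving monomial $\alpha(\beta\gamma)^{k-1}\tr((AB)^{k-1})$.
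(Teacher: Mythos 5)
Your overall strategy is the same as the paper's (push everything to $S^1\times\bP(W_k^\perp)$ via $\phi$, split $g^{-1}dg$ into its diagonal and off-diagonal parts, and apply Fubini), and several of your ingredients are correct and even cleaner than the paper's: the decomposition $\omega=\alpha P_L+\beta A+\gamma B$ with $\alpha=\bar\lambda\,d\lambda$, $\beta=1-\bar\lambda$, $\gamma=\lambda-1$ is exactly the coordinate-free form of the matrix the paper computes in a chart, the identification $AB|_L=F^\tau$ with the curvature of the tautological bundle is valid (since $P_L\,dP_L\,P_L=0$), and $\int_{\bP^{k-1}}\tr\bigl((AB)^{k-1}\bigr)=(2\pi i)^{k-1}$ agrees with the paper's explicit Fubini--Study computation of $\int\tr\bigl((dS^*\wedge dS)^{k-1}\bigr)$.

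However, there is a genuine error in the key combinatorial step: the single $\alpha P_L$ factor does \emph{not} ``collect with coefficient $1$.'' Writing $T=\beta A+\gamma B$, the terms of $\omega^{2k-1}$ containing exactly one factor $\alpha P_L$ are $T^{j}(\alpha P_L)T^{2k-2-j}$ for $j=0,\dots,2k-2$, and the graded-cyclic property gives $\tr\bigl(T^{j}(\alpha P_L)T^{2k-2-j}\bigr)=(-1)^{j(2k-1-j)}\tr\bigl(\alpha P_L T^{2k-2}\bigr)=\tr\bigl(\alpha P_L T^{2k-2}\bigr)$, since $j(2k-1-j)\equiv j(j+1)\equiv 0 \pmod 2$. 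All $2k-1$ positions therefore contribute equally, and the correct identity is
\[
\tr(\omega^{2k-1})=(2k-1)\,\alpha\,(\beta\gamma)^{k-1}\tr\bigl((AB)^{k-1}\bigr).
\]
This is consistent with the paper, whose weighted supertrace satisfies $\wstr(D^{k-1})=(2k-1)\tr\bigl((dS^*\wedge dS)^{k-1}\bigr)$ after using $\tr\bigl((dS\wedge dS^*)^{k-1}\bigr)=-\tr\bigl((dS^*\wedge dS)^{k-1}\bigr)$. With your coefficient $1$, assembling your $S^1$-integral $(-1)^{k}2\pi i\binom{2k-2}{k-1}$ (after the orientation sign), your spatial integral $(2\pi i)^{k-1}$, and the prefactor $\left(\tfrac{i}{2\pi}\right)^{k}\tfrac{[(k-1)!]^2}{(2k-1)!}$ from (\ref{tck1}) yields $\tfrac{(2k-2)!}{(2k-1)!}=\tfrac{1}{2k-1}$ rather than $1$; the missing factor $2k-1$ is exactly the one above. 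The measure-theoretic reduction from $\overline{U(U_{\kappa})}$ to $U(U_{\{k\}})$ and the remaining normalizations are fine.
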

\begin{proof} Fix $L\in \bP(W_k^{\perp})$. We write $\phi$ in the "chart"  $S^1\times \Hom(L,L^{\perp})$:
\[\phi(\lambda,A)=\left(\begin{array}{cc}\frac{\lambda+A^*A}{1+A^*A} & (\lambda-1)(1+A^*A)^{-1}A^*\\
 (\lambda-1)(1+AA^*)^{-1}A&\frac{1+\lambda AA*}{1+AA^*}\end{array}\right)
\]
where the decomposition of $\phi(\lambda,A)$ on the right is relative $L\oplus L^{\perp}$.
The differential at the point $(\lambda,0)$ in this chart is:
\[d\phi_{\lambda,L}(w,S)=\left(\begin{array}{cc} w& (\lambda-1)S^*\\
 (\lambda-1)S&0\end{array} \right)
\]
Hence
\[\phi^{-1}(\lambda,L)d\phi_{\lambda,L}(w,S)=\left(\begin{array}{cc} \bar{\lambda}w& (1-\bar{\lambda})S^*\\
 (\lambda-1)S&0\end{array} \right)
\]
The write hand side is always a skew-symmetric matrix. This is of course the pull-back of $g^{-1}(dg)$ to $S^1\times \bP(W_{k}^{\perp})$ it should  be looked at as a $1$-form with values in $\mathfrak{u}(k)=\mathfrak{u}(\tau\oplus \tau^{\perp})$ where $\tau$ is the tautological bundle over $\bP(W_{k}^{\perp})$ pulled-back to $S^1\times \bP(W_{k}^{\perp})$. Then we see that
\[\phi^*(g^{-1}dg)(\lambda,L)=\left(\begin{array}{cc} \lambda^{-1}d\lambda& -\overline{\alpha(\lambda)}dS^*_L\\
 \alpha(\lambda)dS_L&0\end{array} \right)\qquad \alpha(\lambda)=\lambda-1
\]  
where $dS$ denotes (the pull-back of) the $1$-form with values in the bundle $\Hom(\tau,\tau^{\perp})\simeq T^{(1,0)}\bP(W_k^{\perp})$, obtained by differentiating  $\id_{\bP(W_{k}^{\perp})}$ and $dS^*$ is the conjugate of $dS$. The important point is that $dS$ is globally defined not just in the chart centered at $L$. 

We need to compute $\wedge^{2k-1}\phi^*(g^{-1}dg)$. Write then
\[\phi^*(g^{-1}dg)=C+B\]
where
\[ C:=\left(\begin{array}{cc} \lambda^{-1}d\lambda &0\\ 0 & 0\end{array}\right)\qquad B:=\left(\begin{array}{cc}0 & -\overline{\alpha(\lambda)}dS^*\\
 \alpha(\lambda)dS& 0\end{array}\right)
\] 
We need also $C_1=\left(\begin{array}{cc} 0 &0\\ 0 & -\lambda^{-1}d\lambda\otimes\id\end{array}\right)$.

The following relations are straightforward
 \[C^2=0,\quad C_1^2=0,\quad  B^2C=CB^2,\quad B^2C_1=C_1B^2,\]
 \[CC_1=C_1C=0,\quad BCB=B^2C_1,\quad C_1BC=0,\quad CBC_1=0.
 \]
 Let $\wedge^0B:=\id$. We prove by induction that for all $j\geq 1$ the following holds.
 \begin{equation}\label{wedge-1}\wedge^{2j-1}\phi^*(g^{-1}dg)=\wedge^{2j-2}B\wedge[jC+(j-1)C_1]+\wedge^{2j-1}B
 \end{equation}
 Indeed the equality is trivially true for $j=1$. Let $\omega:=\phi^*(g^{-1}dg)$ and write $\omega^{j}:=\wedge^j\omega$. Then the following computation finishes the proof of (\ref{wedge-1}):
 \[\omega^{2j-1}=\omega^{2j-3}\wedge\omega^2=[B^{2j-4}((j-1)C+(j-2)C_1)+B^{2j-3}](CB+BC+B^2)=
 \]
 \[=B^{2j-4}((j-1)C + (j-2)C_1)B^2 + B^{2j-3}CB + B^{2j-2}C + B^{2j-1}=
 \]
 \[B^{2j-2}((j-1)C+(j-2)C_1)+B^{2j-2}C_1+B^{2j-2}C+B^{2j-1} = B^{2j-2}(jC+(j-1)C_1)+B^{2j-1}.
 \]
 
 Now $B^{2j-1}$ is block anti-diagonal, hence $\tr B^{2k-1}=0$  and we conclude that:
 \[ \tr\wedge^{2k-1}\omega=\tr B^{2k-2}\wedge (jC+(j-1)C_1)=
 \]
 \[\qquad\qquad\qquad\quad=[(-1)^{k-1}|\alpha(\lambda)|^{2k-2}\lambda^{-1}d\lambda] \cdot\wstr(D^{k-1})
 \]
 where $\wstr\left(\begin{array}{cc} T_1&0\\ 0& T_2 \end{array}\right)=k\tr T_1 -(k-1)T_2$ and $D=\left(\begin{array}{cc} dS^*\wedge dS & 0\\
  0& dS^*\wedge dS \end{array}\right).$
  
  We have thus  written $\tr \wedge^{2k-1}\phi^*g^{-1}(dg)$ as a product of pull-backs of forms from $S^1$ and $\bP(W_{k}^{\perp})$ respectively. Since $\phi$ is orientation reversing we have
    \begin{equation}\label{eqUka}\int_{U(U_{\kappa})}\tr\wedge^{2k-1}g^{-1}(dg)=\int_{S^1}(-1)^{k}|\alpha(\lambda)|^{2k-2}\lambda^{-1}d\lambda\cdot\int_{\bP({W_{k}^{\perp}})}\wstr(D^{k-1})
  \end{equation}
  We use the orientation preserving Cayley transform $t\ra \frac{t-i}{t+i}$ to turn the integral:
  \[\int_{S^1}(-1)^{k}|\alpha(\lambda)|^{2k-2}\lambda^{-1}d\lambda=(-1)^k2^{k-1}\int_{S^1}(1-\Real \lambda)^{k-1}\lambda^{-1}d\lambda
  \] into
  \begin{equation}\label{intS1}2^{2k-1}(-1)^{k}i\int_{\bR}\frac{1}{(1+t^2)^{k}}~dt=(-1)^k2\pi i{2k-2 \choose k-1}
  \end{equation}
  
  In order to compute the integral of $\wstr(D^{k-1})$ we notice first that $\wstr(D^{k-1})$ is a $\mathcal{U}(W_k^{\perp})$ invariant form on $\bP(W_k^{\perp})$ and therefore has to equal a constant multiple times the Fubini-Study volume form of $\bP(W_k^{\perp})$. If we fix a point $L_0\in \bP(W_k^{\perp})$ we can describe  $dS$ and $dS^*$ in terms of a canonical basis of the chart centered at $L_0$ as
  \[ dS_{L_0}=\left(\begin{array}{c} dz_1 \\
   dz_2\\
   \ldots\\
   dz_{k-1}
   \end{array}\right)\qquad dS_{L_0}^*=\left(\begin{array}{cccc} d\bar{z}_1 &   d\bar{z}_2 & \ldots & d\bar{z}_{k-1}
   \end{array}\right)
  \]
  Therefore
  \[ dS_{L_0}^*\wedge dS_{L_0}=\sum_{i=1}^{k-1}d\bar{z}_i\wedge dz_i,\;\;\mbox{and}\;\; (dS_{L_0}\wedge dS_{L_0}^*)_{ij}=dz_i\wedge d\bar{z}_j,\quad \forall\; 1\leq i,j\leq k-1.
  \]
   We get
   \[(S_{L_0}^*\wedge dS_{L_0})^{k-1}=(k-1)!(-1)^{k-1}dz_1\wedge d\bar{z}_1\wedge\ldots\wedge dz_{k-1}\wedge d\bar{z}_{k-1}.
   \]
   One checks rather easily that $(dS_{L_0}\wedge dS_{L_0}^*)^{k-1}$ is diagonal and each diagonal entry is up to a sign equal to $(k-2)!dz_1\wedge d\bar{z}_1\wedge\ldots\wedge dz_{k-1}\wedge d\bar{z}_{k-1}$. In fact
   \[(dS_{L_0}\wedge dS_{L_0}^*)^{k-1}=(-1)^{k-2}(k-2)!dz_1\wedge d\bar{z}_1\wedge\ldots\wedge dz_{k-1}\wedge d\bar{z}_{k-1}\otimes \id
   \]
   and thus
   \[\wstr{D^{k-1}_{L_0}}=(-1)^{k-1}(2k-1)(k-1)!dz_1\wedge d\bar{z}_1\wedge\ldots\wedge dz_{k-1}\wedge d\bar{z}_{k-1}
   \]
 The K\"ahler form at the point $L_0$ on $\bP(W_{k}^{\perp})$ (see \cite{GH}, page 31) is
 \[\eta_{L_0}:=\frac{i}{2\pi}\sum_{j=1}^{k-1}dz_j\wedge d\bar{z}_j
 \]
 and
 \[ \int_{\bP(W_{k}^{\perp})}\wedge^{k-1}\eta=1
 \]
 We compare $\wedge^{k-1}\eta_{L_0}$ and $\wstr{(D^{k-1}_{L_0})}$ and deduce, due to the fact that they are invariant forms that
 \[\wstr{(D^{k-1})}=(-1)^{k-1}(2k-1)\left(\frac{2\pi}{i}\right)^{k-1}\wedge^{k-1}\eta
 \]
 and therefore
 \begin{equation}\label{intCP1}\int_{\bP(W_{k}^{\perp})}\wstr{(D^{k-1})}=(-1)^{k-1}(2k-1)\left(\frac{2\pi}{i}\right)^{k-1}
 \end{equation}
 Putting together (\ref{eqUka}) (\ref{intS1}) and (\ref{intCP1}) we conclude that
 \[\int_{U(U_{\kappa})}\tr\wedge^{2k-1}g^{-1}(dg)=-2\pi i{2k-2\choose k-1}(2k-1)\left(\frac{2\pi}{i}\right)^{k-1}=\left(\frac{2\pi}{i}\right)^{k}\frac{(2k-1)!}{[(k-1)!]^2}.
 \]
   \end{proof}
   \begin{remark} Just as a curiosity note that
    \[\overline{S(U_{\{k\}})}=\{U\in \mathcal{U}(W_k^{\perp})~|~\dim{\Ker(1+U)}\geq 1,\; W_{k-1}/W_{k}\subset {\Ker(1+U)}\}= \mathcal{U}(W_{k-1}^{\perp}),\]
    with $\mathcal{U}(W_{k-1}^{\perp})$ lying inside  $\mathcal{U}(W_k^{\perp})$, via $U\ra -\id_{W_{k-1}/W_k}\oplus U$.
    
 This embedding of $\mathcal{U}(W_{k-1}^{\perp})$ inside $\mathcal{U}(W_k^{\perp})$ is the isotropic space of $(-1,0,\ldots, 0)\in S^{2k-1}$, the unit sphere inside $\mathcal{U}(W_k^{\perp})$. It is no wonder then that the map 
 \[S^1\times \bC\bP^{k-1}\ra S^{2k-1},\qquad(\lambda,L)\ra\phi(\lambda,L)(-1,0,\ldots,0)\]
 is a map of degree $1$.  
   \end{remark}
We will keep the notation $U_{I}$, $S(U_{I})$ and $U(U_{I})$ for the corresponding critical/stable/unstable manifolds in $\mathcal{U}(E)$. 
\begin{theorem}\label{Nico} Let $E\ra B$ be a trivializable hermitian vector bundle of rank $n$ over an oriented manifold with corners $B$ endowed with a compatible connection. Let $g:E\ra E$ be a smooth gauge transform.  Suppose that  a complete flag $E=W_0\supset W_1\supset \ldots \supset W_n=\{0\}$  (equivalently a trivialization of $E$) has been fixed such that $g$ as a section of $\mathcal{U}(E)$ is completely transverse to all the  manifolds $S(U_{I})$ determined by the flag. Then, for each $1\leq k\leq n$ there exists a  flat current $T_k$ such that the following equality of  currents of degree $2k-1$ holds:
\begin{equation}\label{TCkE} \Tc_k(E, g,\nabla)-g^{-1}(S(U_{\{k\}}))=dT_k.
\end{equation}
where 
\begin{eqnarray*}g^{-1}(S(U_{\{k\}}))=\{b\in B~|~\dim{\Ker(1+g_b)}=\dim{\Ker{(1+g_b)\cap (W_{k-1})_b}}=1,\qquad\\
 \dim{\Ker{(1+g_b)}\cap (W_{k})_b}=0\}.\end{eqnarray*}
In particular, when $B$ is compact without boundary, then $\Tc_k(E, g,\nabla)$ and $g^{-1}(S(U_{\{k\}}))$ are Poincar\'e duals to each other.
\end{theorem}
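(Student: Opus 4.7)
The plan is to apply Corollary \ref{c.principal} to the fibre bundle $\pi:\mathscr{U}(E)\to B$ endowed with the horizontally constant Morse--Smale vertical vector field constructed above from $f(U)=\Real\Tr(AU)$, to the section $g$ itself, and to a judiciously chosen closed $(2k-1)$-form $\omega$ on $\mathscr{U}(E)$. The trivializability of $E$ is used twice: first to ensure that the flag (and hence $A$) is globally defined, so that the vector field is truly horizontally constant; and second, further below, to obtain a convenient reference connection.

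The form I would use is $\omega:=\Tc_k(U^{\tau},\pi^{*}\nabla)\in\Omega^{2k-1}(\mathscr{U}(E))$, where $U^{\tau}$ denotes the tautological gauge transform of $\pi^{*}E\to\mathscr{U}(E)$. Naturality of odd Chern--Weil forms (property (d) of Section \ref{OCW}) yields $g^{*}\omega=\Tc_k(E,g,\nabla)$. Under the transversality hypothesis on $g$, Corollary \ref{c.principal} produces a flat current $\mathcal{T}_{\infty}(\omega)$ satisfying
\begin{equation*}
\sum_{I:\,\dim U(U_{I})\leq 2k-1}\Res^{u}_{F_{I}}(\omega)\,[g^{-1}(S(U_{I}))]-\Tc_k(E,g,\nabla)=d\mathcal{T}_{\infty}(\omega),
\end{equation*}
where $F_{I}$ is the critical submanifold of $\mathscr{U}(E)$ fibred over $B$ with fibre $\{U_{I}\}$. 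The theorem will then follow, with $T_k=-\mathcal{T}_{\infty}(\omega)$, once it is shown that the only nontrivial summand is $I=\{k\}$ and that its residue equals $1$.

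To compute the residues I would reduce to the case of a trivial connection. Fix a trivialization of $E$, let $\nabla_{0}$ be the induced flat connection, and set $\omega_{0}:=\Tc_k(U^{\tau},\pi^{*}\nabla_{0})$. Property (b) of Section \ref{OCW} gives $\omega-\omega_{0}=d\beta$ for some $\beta\in\Omega^{2k-2}(\mathscr{U}(E))$; applying Corollary \ref{c.principal} to the closed exact form $d\beta$ then shows that replacing $\omega$ by $\omega_{0}$ modifies $\mathcal{T}_{\infty}(\omega)$ only by an exact current, so it suffices to compute residues for $\omega_{0}$. In the trivialization, the affine path of connections used to define $\omega_{0}$ is purely vertical, and a direct computation along the lines of (\ref{tck1}) shows that $\omega_{0}=p_{1}^{*}\Tc_k(\tilde U,d)$, the pullback of the fibre form along the projection $p_{1}:\mathscr{U}(n)\times B\to\mathscr{U}(n)$. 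Fibre integration of such a pullback vanishes by bidegree reasons whenever $\dim U(U_{I})<2k-1$, and for $\dim U(U_{I})=2k-1$ it produces the constant $\int_{U(U_{I})}\Tc_k(\tilde U,d)$. By Lemma \ref{UI} this integral vanishes for every $I\neq\{k\}$, while the explicit computation preceding the theorem gives $\int_{U(U_{\{k\}})}\Tc_k(\tilde U,d)=1$. Thus only the summand $I=\{k\}$ survives, and it contributes exactly $g^{-1}(S(U_{\{k\}}))$.

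The principal technical difficulty I anticipate lies in the passage from $\omega$ to $\omega_{0}$: it must be checked, not merely in cohomology but at the level of flat currents, that the exact form $d\beta$ really is invisible in (\ref{TCkE0}) after modifying $T_k$. This hinges on the $\bR$-linearity of $\omega\mapsto\mathcal{T}_{\infty}(\omega)$ and on the observation that for a closed exact form the associated transgression current can itself be chosen exact. A secondary bookkeeping matter is making sure that the orientation on $U(U_{\{k\}})$ fixed in Section \ref{OCW} (so that $\phi$ is orientation reversing) matches the transversality orientation on $g^{-1}(S(U_{\{k\}}))$ induced by Corollary \ref{c.principal}, so that the residue is indeed $+1$.
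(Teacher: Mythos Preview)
Your approach is exactly the paper's: apply Corollary \ref{c.principal} to $\pi:\mathscr{U}(E)\to B$ with the flow of $f(U)=\Real\Tr(AU)$, the section $g$, and $\omega=\Tc_k(U^{\tau},\pi^{*}\nabla)$, then invoke Lemma \ref{UI} and the computation $\int_{U(U_{\{k\}})}\Tc_k(\tilde U,d)=1$ for the residues. The detour through $\omega_0$ that you flag as the ``principal technical difficulty'' is unnecessary: since $\deg\omega=2k-1=\dim U(U_I)$ for the surviving terms, the residue $\int_{U(F_I)/F_I}\omega$ is a function whose value at $b$ depends only on the restriction of $\omega$ to the fibre $\mathscr{U}(E_b)$, and there $\pi^{*}\nabla$ restricts to the trivial connection (its connection form $\pi^{*}A$ vanishes on vertical vectors), so $\omega|_{\mathscr{U}(E_b)}=\Tc_k(\tilde U,d)$ already.
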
 
\begin{proof} We use Corollary \ref{c.principal} for the fiber bundle $\mathcal{U}(E)$ with the flow described in this section and form $\omega=\Tc_k(U^{\tau},\pi^*\nabla)$ where $\pi:\mathcal{U}(E)\ra B$. All the necessary residue computations have been performed. 
\end{proof}
\begin{remark} The current $T_k$ is a spark in the terminology of Harvey and Lawson.
\end{remark}
\begin{remark} \label{transcon} While the transversality condition of $g$ with  the stable manifolds $S(U_{I})$ such that $S(U_{I})\subset \overline{S(U_{\{k\}})}$ is a reasonable requirement for the existence of the current $g^{-1}(S(U_{\{k\}}))$, it seems unnatural that one needs to impose the transversality of $g$ with \emph{all} stable manifolds $S(U_{I})$ in order to obtain (\ref{TCkE}) as one does in  Theorem \ref{Nico}. We conjecture that (\ref{TCkE}) is true under the weaker hypothesis. 
\end{remark}

\section{A Fredholm transgression formula}

In \cite{Qu}, Quillen introduced various smooth differential forms that live on (infinite dimensional) Banach manifolds that are classifying for even and odd $K$-theory. Fix $H$ a complex, separable Hilbert space and let $\mathscr{L}$, $\mathscr{L}^+$, $\mathscr{K}$ be the space of bounded, bounded and self-adjoint, respectively compact operators on $H$. Inside $\mathscr{K}$ there exists a sequence of two-sided ideals, called Schatten spaces, denoted $\Sch^p$. 

The Palais spaces are the spaces of unitary operators $\mathcal{U}^p:=\mathcal{U}(H)\cap (\id_H+\Sch^p)$ and it is well-known that they are smooth Banach manifolds (modelled on $\Sch^p\cap\mathscr{L}^+$) and also classifying for odd $K$-theory, i.e. they have the weak homotopy type of the topological direct limit of spaces $U(\infty):=\lim U(n)$. Quillen defined different families of smooth closed forms $\gamma_{2k-1}^t$, $\gamma_{2k-1,q}^t$, $\Phi_{2k-1}^u$ where $t\in\bC,\; \Real t>0$, $n\in \bN$, $u>0$ which are \emph{well-defined} on the finite dimensional unitary groups $U(n)$ and on certain Palais spaces as follows (see Theorem 5 in op. cit.):
\begin{itemize}
\item[(1)] $\gamma_{2k-1}^t$ on $\mathcal{U}^p$ when $p\leq 2k-1$;
\item[(2)] $\gamma_{2k-1,q}^t$ on $\mathcal{U}^{p}$ when $p\leq 2k-1+2q$;
\item[(3)] $\Phi_{2k-1}^u$ on  $\mathcal{U}^{p}$ for all $p$.
\end{itemize}
The forms $\gamma_{2k-1}^t$, $\gamma_{2k-1,q}^t$ and $\Phi_{2k-1}^1$ are all cohomologous and represent the degree $2k-1$-component of the odd Chern character $\ch_{2k-1}$ of the universal $K^{-1}$-class, i.e. the class induced by the identity map $\id_{U(\infty)}$. We will call each of them a Quillen form.

In \cite{Ci1} we  gave alternative construction to  the pull-backs $\varphi^*{\ch_{2k-1}}$ when $\varphi: B\ra \mathcal{U}^{p}$ when $B$ is a compact oriented manifold and $\varphi$ is smooth. In fact, the theory works for maps $\varphi:B\ra \mathcal{U}^{-}$, where $\mathcal{U}^-$ is the open subset of unitary operators $U$ such that $1+U$ is Fredholm. This is another manifold classifying space for $K^{-1}$ that contains  $\mathcal{U}^p$ for every $p$, however it does not come with any easy to describe smooth differential forms on it. Under a  certain finite set\footnote{hence it applies to "generic" smooth maps} of transversality conditions, the classes $\varphi^*{\ch_{2k-1}}$ were described (up to multiplication by a rational number) via preimages $\varphi^{-1}\overline{Z_{\{k\}}}$ where $\overline{Z_{\{k\}}}$ are stratified subspaces of codimension $2k-1$ in $\mathcal{U}^-$. In fact, the Schubert cell $Z_{\{k\}}$ is defined by the same incidence relations as the stable manifold $S(U_{\{k\}})$ we saw in last section. 

We used local (sheaf) cohomology in order to associate to  a finite codimensional, cooriented stratified space a cohomology class, which behaves well under \emph{transverse} pull-back. We take a different path here and show that in fact under a different but still finite set of transversality condition one can define the current $\varphi^*Z_{\{k\}}$ and this is Poincar\'e dual to $\frac{(-1)^{k-1}}{k-1)!} \varphi^*\Omega_k$ where $\Omega_k$ is a Quillen form. In fact something stronger is true.

We will assume that a complete flag
\[ H\supset W_0\supset\ldots \supset W_k\supset
\]
has been fixed with $\codim{W_k}=k$. For every $I=\{i_1<\ldots<i_k\}$ a $k$-tuple of positive integers, let 
\begin{eqnarray*} Z_{I}^p:=\{U\in \mathcal{U}^p~|~\dim{\Ker(1+U)}=k, \dim{\Ker(1+U)\cap W_{m}}=k-p, \; \forall 0\leq p\leq k,\; \\
 \forall i_p\leq m<i_{p+1}\}
\end{eqnarray*}
where as usual $i_0=0$, $i_{k+1}=\infty$. 

We notice that for every smooth map $\varphi:B\ra \mathcal{U}^p$ and every $p$ from a compact manifold $B$ there exists a subspace $W_N$ of the flag such that ${\Ker(1+\varphi(b))\cap W_N}=\{0\}$ for all $b\in B$. This is because if $U\in  \mathcal{U}^p$ then $1+U$ is Fredholm and this is an open condition. It follows that the collection of transversality condition $\varphi\pitchfork Z_I^p$ is trivially satisfied if there exists $ a>N$ such that $a\in I$ since then $\varphi^{-1}(Z_I^p)=\emptyset$. Hence $\varphi\pitchfork Z_I^p$ for every $I$ is a generic condition. 

\begin{theorem}\label{thm71} Let $\varphi:B\ra \mathcal{U}^p$ be a smooth map from a compact, oriented manifold $B$, possibly with corners such that $\varphi\pitchfork Z_I^p$ for every $I$. Let $\Omega_k$ be a Quillen form of degree $2k-1$ that makes sense on $\mathcal{U}^p$. Then for every such $\Omega_k$, there exists a  flat current $T_k$ such that:
  \begin{equation}\label{lasteq} \varphi^{-1}Z_{\{k\}}-(-1)^{k-1}(k-1)!\varphi^*\Omega_k=dT_k.\end{equation}
  In particular, when $B$ has no boundary, $\frac{(-1)^{k-1}}{(k-1)!}\varphi^{-1}Z_{\{k\}}^p$ represents the Poincar\'e dual of $\ch_{2k-1}([\varphi])$, where $[\varphi]\in K^{-1}(B)$ is the natural odd $K$ theory class determined by $\varphi$.
\end{theorem}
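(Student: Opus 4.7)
The plan is to reduce the infinite-dimensional statement to Theorem \ref{Nico0} via a symplectic reduction that replaces $\varphi$ by a gauge transformation of a finite-rank trivializable hermitian bundle. The key observation is that the Fredholm hypothesis plus compactness of $B$ force the spectral data of interest to live in a finite-dimensional subspace of $H$.

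First I would choose the truncation. Since $b \mapsto 1 + \varphi(b)$ is a continuous family of Fredholm operators over the compact $B$, the dimension $\dim \ker(1+\varphi(b))$ is uniformly bounded. Combining this with the transversality hypotheses $\varphi \pitchfork Z_I^p$ for all $I$, together with the fact that $\codim Z_I^p = \sum_{i \in I}(2i-1)$ grows without bound, one deduces that $\varphi^{-1}(Z_I^p) = \emptyset$ whenever $\max(I)$ exceeds some $N$ depending only on $\dim B$ and $\varphi$. After possibly enlarging $N$, I can moreover arrange that $\ker(1+\varphi(b)) \cap W_N = \{0\}$ for every $b \in B$, since this is an open condition that holds for $N$ large.

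Next comes the reduction step. Let $E := B \times W_N^\perp$, a trivializable hermitian bundle of rank $N$, endowed with the trivial connection $d$ and the induced flag $W_i' := W_i \cap W_N^\perp$, which has $\codim W_i' = i$ in $E$ for $0 \le i \le N$. Using the standard symplectic reduction of the Lagrangian graph $\Gamma_{\varphi(b)} \subset H \oplus H$ by the coisotropic $W_N^\perp \oplus W_N^\perp$ (well-defined precisely because $\ker(1+\varphi(b)) \cap W_N = \{0\}$), one obtains a smooth section $g \in \Gamma(\mathscr{U}(E))$. I would then verify three compatibilities: (i) $\ker(1+g_b) = \ker(1+\varphi(b))$ as subspaces of $W_N^\perp$, so the defining incidence conditions of $Z_I^p$ in the flag $\{W_i\}$ translate exactly into the defining conditions of $S(U_I)$ in the flag $\{W_i'\}$; (ii) consequently $\varphi \pitchfork Z_I^p$ is equivalent to $g \pitchfork S(U_I)$ for all $I \subset \{1,\dots,N\}$, with the same preimages $\varphi^{-1}(Z_{\{k\}}^p) = g^{-1}(S(U_{\{k\}}))$ as currents on $B$; (iii) the pulled-back Quillen form satisfies
\[
\varphi^*\Omega_k = \Tch_k(E, g, d) + d\eta
\]
for some smooth $\eta \in \Omega^{2k-2}(B)$, since both represent $\ch_{2k-1}$ of the $K^{-1}$-class determined by $\varphi$ (in finite dimensions this is Quillen's Theorem 5; for the reduction one uses the homotopy invariance established in the properties (a)--(d) of Section \ref{OCW} together with a path of Quillen-type forms on $\mathcal{U}^p$ interpolating to the finite-dimensional odd Chern character).

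The last step is algebraic. From the computations leading to \eqref{chkint} and \eqref{tck1} one has, on a trivial bundle with trivial connection, the pointwise identity $\Tc_k(E, g, d) = (-1)^{k-1}(k-1)!\,\Tch_k(E, g, d)$. Applying Theorem \ref{Nico0} to the triple $(E, g, d)$ on $B$ with the flag $\{W_i'\}$ yields a flat current $T_k'$ with
\[
\Tc_k(E, g, d) - g^{-1}(S(U_{\{k\}})) = dT_k',
\]
and substituting the above identifications gives \eqref{lasteq} with $T_k := -T_k' - (-1)^{k-1}(k-1)!\,\eta$. The Poincar\'e duality statement when $\partial B = \emptyset$ follows by integrating against closed test forms.

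The main obstacle is point (iii): rigorously identifying the pullback $\varphi^*\Omega_k$ of the infinite-dimensional Quillen form with the finite-dimensional odd Chern-Weil form $\Tch_k(E,g,d)$ up to an exact form on $B$. One needs either to construct an explicit primitive by interpolating between the Quillen form on $\mathcal{U}^p$ and the Chern-Weil transgression on the reduced bundle via a family of $\zeta$-regularized traces, or to argue more abstractly via the universal property of the odd Chern character on $K^{-1}$. Everything else is essentially a bookkeeping exercise in matching incidence conditions on flags and translating Theorem \ref{Nico0} through the reduction.
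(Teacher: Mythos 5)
Your proposal follows essentially the same route as the paper: truncate via a finite-codimension subspace $W_N$ of the flag with $\Ker(1+\varphi(b))\cap W_N=\{0\}$, symplectically reduce to a gauge transform $g=\mathcal{R}^{W_N}\circ\varphi$ of the trivial bundle $B\times W_N^{\perp}$, match the preimages $\varphi^{-1}(Z_{\{k\}}^p)=g^{-1}(S(U_{\{k\}}))$, and invoke Theorem \ref{Nico0} together with the constant relating $\Tc_k$ and $\Tch_k$. The one point you flag as the "main obstacle" --- identifying $\varphi^*\Omega_k$ with the finite-dimensional form up to an exact term --- is resolved in the paper far more simply than by $\zeta$-regularized interpolation: the open set $\mathcal{U}^p_{W_N}$ is diffeomorphic to a vector bundle over $U(W_N^{\perp})$ with zero section $\iota:U\mapsto -\id_{W_N}\oplus U$, so $\varphi$ is smoothly homotopic \emph{within} $\mathcal{U}^p_{W_N}$ to $\iota\circ g$; since each Quillen form satisfies $\iota^*\Omega_k=\Omega_k^{W_N^{\perp}}$, the standard homotopy formula gives $\varphi^*\Omega_k-g^*\Omega_k^{W_N^{\perp}}=d\beta$ with $\beta$ smooth, and in finite dimensions all Quillen forms of degree $2k-1$ are cohomologous to $\Tch_k$. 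With that substitution your argument is complete and coincides with the paper's.
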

\begin{proof} We use symplectic reduction. For each linear subspace $W\subset H$ of finite codimension there exists a smooth (even real analytic) map $\mathscr{R}^{W}:\mathcal{U}^p_W\ra U(W^{\perp})$, where 
\[ \mathcal{U}^p_W:=\{U\in \mathcal{U}^p~|~\Ker(1+U)\cap W=\{0\}\}
\]
is an open subset of $\mathcal{U}^p$. The expression of $\mathcal{R}^W$ relative to the decomposition $U=\left(\begin{array}{cc} X& Y\\ Z& T\end{array}\right)$ vis-a-vis $H=W\oplus W^{\perp}$ is:
\[\mathcal{R}^W(U)=T-Z(1+X)^{-1}Y.
\]
The map $\mathcal{R}^W$\footnote{It is called symplectic reduction because under Arnold's theorem which identifies  the unitary group $\mathcal{U}^p$ with the  (Hermitian) Lagrangian Grassmannian of Schatten class $p$ it corresponds  to the homonymous process well-known in symplectic topology.} has some nice properties. For example it can be shown that together with the "0-section":
\[\iota:U(W^{\perp})\hookrightarrow \mathcal{U}^p_W,\qquad U\ra -\id_{W}\oplus U
\]
is diffeomorphic to a vector bundle over $U(W^{\perp})$ (see Corollary 4.1 in \cite{Ci1}). Hence by choosing $W=W_N$ a subspace of the flag as mentioned before the proof we get that $\Imag \varphi\subset \mathcal{U}^p_W$ and there exists a smooth homotopy $h:[0,1]\times B\ra \mathcal{U}^p$ between $\psi:=\mathcal{R}^W\circ \varphi$ and $\varphi$. All the Quillen forms $\Omega_k$ have finite dimensional counterparts $\Omega^{W^{\perp}}_k$ such that $\iota^*\Omega_k=\Omega^{W^{\perp}}_k$. Hence there exists a smooth form $\beta(\Omega)$ on $B$ such that
\[ \varphi^*\Omega_k- \psi^*\Omega_k=d(\beta(\Omega_k)).
\]
Another important property  is that $\mathcal{R}^W(Z_I^p)=S(U_{I})$ and in fact $\mathcal{R}^W)^{-1}(S(U_{I}))=Z_I^p$. It follows that $\varphi^{-1}(Z_{\{k\}}^p)=\psi^{-1}(S(U_{\{k\}}))$. Therefore we can use Theorem \label{Nico} to conclude that (\ref{lasteq}) holds for $\Omega_k=\gamma_{2k-1}^1$ which coincides with $\Tch_k$. Since the Quillen forms of degree $2k-1$ are all cohomologous in the finite dimensional case we get the result for such forms.
\end{proof}
\begin{remark} The coorientation (implicitly the orientation) of $\varphi^{-1}Z_{\{k\}}$ is discussed in  detail in \cite{Ci1}.
\end{remark}
\begin{remark} It seems that  there are too many transversality conditions in Theorem \ref{thm71} (see Remark \ref{transcon} and compare with Proposition 7.1 from \cite{Ci1}).
\end{remark}

\end{document}